\newtheorem{theorem}{Theorem}
\newtheorem{proposition}{Proposition}[section]
\newtheorem{corollary}{Corollary}[section]
\newtheorem{lemma}{Lemma}[section]
\newtheorem{remark}{Remark}[section]
\newtheorem{claim}{Claim}[section]
\newtheorem{question}{Question}
\newtheorem{conjecture}{Conjecture}
\theoremstyle{definition}
\newtheorem{definition}{Definition}
\newtheorem{example}{Example}[section]
\newcommand{\R}{\mathbb{R}}
\newcommand{\N}{\mathbb{N}}
\newcommand{\Sp}{\mathbb{S}}
\newcommand{\diam}{\mathrm{diam}}
\newcommand{\dis}{\mathrm{dis}}
\newcommand{\dgh}{d_\mathrm{GH}}
\newcommand{\dha}{d_\mathrm{H}}
\newcommand{\conv}{\mathrm{Conv}}
\newcommand{\convsph}{\mathrm{ConvSph}}
\newcommand{\disc}{\mathrm{disc}}
\DeclareMathOperator{\arccot}{arccot}
\DeclareMathOperator{\Ima}{Im}
\definecolor{darkblue}{rgb}{0.0, 0.0, 0.8}
\definecolor{darkred}{rgb}{0.8, 0.0, 0.0}
\definecolor{darkgreen}{rgb}{0.0, 0.8, 0.0}
\newcommand{\extension}{$\text{\faClock}$}
\title{Embedding-Projection Correspondences for the estimation of the Gromov-Hausdorff distance}
\author{Facundo M\'emoli and Zane Smith}
\date{Working paper\\\texttt{Version 1 \\ \today}}
\begin{document}

\maketitle

\begin{abstract}
This writeup describes ongoing work on designing and testing a certain family of correspondences between compact metric spaces that we call \emph{embedding-projection correspondences} (EPCs). Of particular interest are EPCs between spheres of different dimension.
\end{abstract}

\tableofcontents

\section{\label{sec:intro}Introduction}

\begin{framed}
\noindent This is work is evolving. It might contain  an incomplete account in some places. We will be updating this document frequently. The tag \extension \, 
 indicates that an upcoming update is planned in the corresponding part of the text. To indicate that accompanying code exists to demonstrate a construction/idea we will use the symbol \faLaptopCode.
\end{framed}

The central question we explore is: 
\begin{question}[\cite{lim2021gromov}]\label{q:central}
What is the precise value of the Gromov-Hausdorff distance $\dgh(\Sp^m,\Sp^n)$ between spheres of different dimensions (endowed with their geodesic distance)? 
\end{question}

Several results have been obtained which provide partial answers to \Cref{q:central}:
\begin{itemize}
    \item In \cite{lim2021gromov} Lim, M\'emoli and Smith obtain the precise value of $\dgh(\Sp^1,\Sp^2)$, $\dgh(\Sp^2,\Sp^3)$ and $\dgh(\Sp^1,\Sp^3)$. They also provide a lower bound for $\dgh(\Sp^m,\Sp^n)$ for all spheres which is based on a version of the Borsuk-Ulam theorem due to Dubins and Schwarz \cite{dubins1981equidiscontinuity}.  These lower bounds arise as obstructions for odd functions from $\Sp^n\to \Sp^m$ to be continuous, when $n>m$.

\item The currently best known lower bound for $\dgh(\Sp^m,\Sp^n)$ was found in \cite{adams2022gromov} via ideas which combine insights from \cite{lim2021gromov} and \cite{adams2020metric} in a way that that generalizes the version of the Borsuk-Ulam theorem due to Dubins and Schwarz.

    \item  In \cite{harrison2023quantitative} Jeffs and Harrison obtain the precise value of $g_{m,n}:=\dgh(\Sp^1,\Sp^{2k})$ for all  integers $k\geq 1$. 

    \item In an upcoming update to \cite{harrison2023quantitative}, Jeffs and Harrison also obtain the precise value of $\dgh(\Sp^1,\Sp^{2k+1})$ for all  integers $k\geq 1$.

\end{itemize}
\medskip

This writeup describes and develops several yet unpublished ideas that led to some of the results in our paper \cite{lim2021gromov}.  A historical account is given in \Cref{sec:hist}. \Cref{fig:dm} describes the current knowledge about the different values of $g_{m,n}.$

\begin{figure}
    \centering
    \includegraphics[width = 0.7\textwidth]{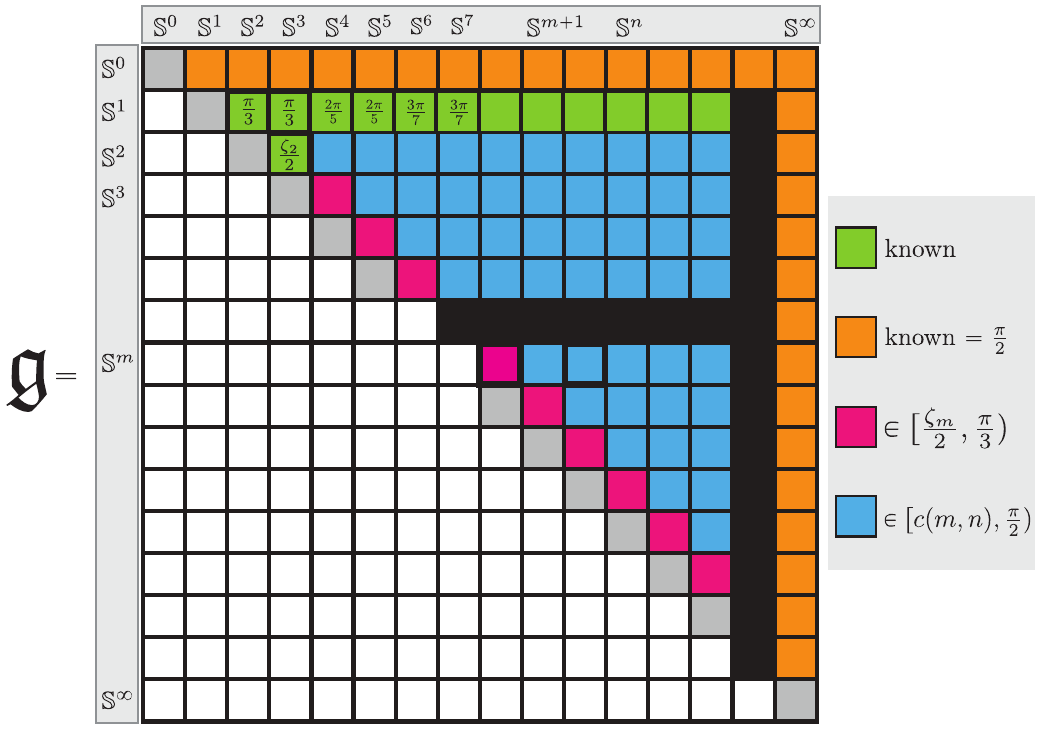}
    \caption{Current knowledge (as of \today) about the  value of $g_{m,n}$ for $0\leq m,n\leq \infty$. See \Cref{sec:intro}.}
    \label{fig:dm}
\end{figure}

\subsection{Acknowledgements}
We thank Henry Adams for suggesting to us exploring the connection between our ideas and the TMC and the Barvinok-Novik polytope.  That fruitful suggestion led to the general description of $R_{\gamma_n}$ that we are exploring in this project and to some of the results that we described. We also thank Sunhyuk Lim for careful reading of the first version and for his feedback.

This work is supported by  NSF-2310412, NSF-1547357, NSF-1740761, NSF-1901360 and by BSF 2020124.


\section{Background}

Given two sets $X$ and $Y$, a correspondence between them is any relation $R\subseteq X\times Y$ such that $\pi_X(R)=X$ and $\pi_Y(R)=Y$ where $\pi_X:X\times Y\rightarrow X$ and $\pi_Y:X\times Y\rightarrow Y$ are the canonical projections. Given two bounded metric spaces $(X,d_X)$ and $(Y,d_Y)$, and any non-empty relation $R\subseteq X\times Y$, its \emph{distortion} is defined as $$\dis(R):=\sup_{(x,y),(x',y')\in R}\big|d_X(x,x')-d_Y(y,y')\big|.$$
\begin{remark}\label{rem:mono-dis}
    Note that for any two nested non-empty relations $S\subset R$
between $X$ and $Y$ one has $\dis(S)\leq \dis(R).$
\end{remark}
For a function $\varphi:X\to Y$, its distortion, $\dis(\varphi)$ is just the distortion of its graph: $\dis(\varphi):=\dis(\mathrm{graph}(\varphi)).$ 

\begin{definition}
The Gromov-Hausdorff distance  between any two bounded metric spaces $(X,d_Y)$ and $(Y,d_Y)$ is defined as
\begin{equation}\label{eqn:dghaltdef}
    \dgh(X,Y):=\frac{1}{2}\inf_R \dis(R),
\end{equation}
where $R$ ranges over all correspondences between $X$ and $Y$.
\end{definition}

Given a correspondence $R$ between $X$ and $Y$, we say that a function $\psi:Y\to X$ is \emph{subordinate} to $R$ whenever its graph is contained in the correspondence:
$$\mathrm{graph}(\psi):=\{(\psi(y),y)|y\in Y\}\subset R.$$

In general, given a function $\psi:Y\to X$ its graph may fail to yield a correspondence between $X$ and $Y$. However, this of course is guaranteed whenever $\psi$ is surjective.

\begin{definition}[Modulus of discontinuity, \cite{dubins1981equidiscontinuity}]\label{def:mod-disc}
Let $Y$ be a topological space, $X$ be a metric space, and $f:Y\rightarrow X$ be any function. Then, we define $\mathrm{disc}(f)$, the \emph{modulus of discontinuity of $f$} as follows:
$$\mathrm{disc}(f):=\inf\{\delta\geq 0:\forall\,y\in Y,\exists\,\text{an open neighborhood }U_y\text{ of }y\text{ s.t. }\diam(f(U_y))\leq\delta\}.$$
Here, for a non-empty subset $A$ of $X$, its \emph{diameter} is defined as $\diam(A):=\sup_{a,a'\in A}d_X(a,a').$
\end{definition}

\begin{remark}
Of course, $\mathrm{disc}(f)=0$ if and only if $f$ is continuous.
\end{remark}

The modulus of discontinuity of a function is controlled by its distortion.
\begin{proposition}[{\cite[Proposition 5.3]{lim2021gromov}}]\label{prop:moddis}
Let $R$ be any correspondence between the metric spaces $X$ and $Y$ and let $\psi:Y\to X$ be any subordinate function. Then,
$$\mathrm{disc}(\psi)\leq \dis(\psi)\leq\mathrm{dis}(R).$$
\end{proposition}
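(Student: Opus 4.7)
The plan is to prove the two inequalities separately, as they are essentially independent.

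For the right-hand inequality $\dis(\psi) \leq \dis(R)$, I would simply invoke \Cref{rem:mono-dis}. Since $\psi$ is subordinate to $R$, by definition $\mathrm{graph}(\psi) \subseteq R$, and since $\dis(\psi)$ is defined as $\dis(\mathrm{graph}(\psi))$, monotonicity of distortion under inclusion of relations gives the result immediately. This part is essentially one line.

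For the left-hand inequality $\mathrm{disc}(\psi) \leq \dis(\psi)$, the strategy is to use small open balls in $Y$ as the witness neighborhoods in the definition of $\mathrm{disc}$. Let $\delta := \dis(\psi)$ and fix any $\epsilon > 0$. For each $y \in Y$, take $U_y$ to be the open ball of radius $\epsilon/2$ around $y$ in the metric $d_Y$. Then for any $y_1,y_2 \in U_y$ we have $d_Y(y_1,y_2) < \epsilon$, and the definition of distortion gives
\[
 d_X(\psi(y_1),\psi(y_2)) \leq d_Y(y_1,y_2) + \delta < \epsilon + \delta.
\]
Taking the supremum over $y_1,y_2 \in U_y$ yields $\diam(\psi(U_y)) \leq \epsilon + \delta$. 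Since this holds for every $y \in Y$ with a common bound $\epsilon + \delta$, the infimum in \Cref{def:mod-disc} gives $\mathrm{disc}(\psi) \leq \epsilon + \delta$. Letting $\epsilon \to 0$ concludes $\mathrm{disc}(\psi) \leq \dis(\psi)$.

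There is no real obstacle here; the only thing to be careful about is ensuring we are entitled to use metric balls as the neighborhoods $U_y$. The definition of $\mathrm{disc}$ allows $Y$ to be a general topological space, but in our setting $Y$ is a metric space, so open $d_Y$-balls are genuine open neighborhoods. The argument is essentially the standard fact that bounded distortion implies uniform continuity, specialized to the particular quantitative bookkeeping demanded by the definition of the modulus of discontinuity.
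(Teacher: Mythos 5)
Your proof is correct, and it is the standard argument: the right-hand inequality follows from monotonicity of distortion under inclusion of relations (as in \Cref{rem:mono-dis}) since $\mathrm{graph}(\psi)\subseteq R$, and the left-hand inequality follows from the metric-ball argument showing $\diam(\psi(U_y))\leq \epsilon+\dis(\psi)$. This matches the approach of the proof the paper cites (\cite[Proposition 5.3]{lim2021gromov}), so there is nothing further to add.
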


\medskip
From now on, by $\mathcal{M}$ we will denote the collection of all compact metric spaces.

\begin{theorem}[{\cite[Main Theorem and  Theorem 5.1]{adams2022gromov}}] \label{thm:lower-bound} For all integers $k\geq 1$ and for any correspondence $R$ between $\Sp^1$ and $\Sp^{2k+1}$ one has $\dis(R)\geq \delta_{k}$ where $$\delta_k:= \frac{2\pi k}{2k+1}.$$
Similarly, 
$\dis(R)\geq \delta_k$ for any correspodence between $\Sp^1$ and $\Sp^{2k}.$
\end{theorem}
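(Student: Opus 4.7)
The plan is to follow the template from \cite{lim2021gromov}: convert the correspondence into an odd subordinate function with small modulus of discontinuity, and then derive a contradiction with the classical Borsuk--Ulam theorem using the Adamaszek--Adams computation of the persistent homotopy type of the Vietoris--Rips filtration of $\Sp^1$.

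\textbf{Step 1 (antipodal symmetrization).} Given a correspondence $R\subset \Sp^1\times \Sp^{2k+1}$, set $\tilde R:=R\cup\{(-x,-y):(x,y)\in R\}$. Because the antipodal maps on both factors are isometries, $\tilde R$ is again a correspondence and $\dis(\tilde R)=\dis(R)$, and it is invariant under the diagonal $\Z/2$-action.

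\textbf{Step 2 (odd subordinate function).} Choose a set-theoretic fundamental domain $F\subset \Sp^{2k+1}$ for the antipodal action. For each $y\in F$, pick $\psi(y)\in \Sp^1$ with $(\psi(y),y)\in \tilde R$, and extend by $\psi(-y):=-\psi(y)$. Antipodal invariance of $\tilde R$ ensures that $\psi$ is subordinate to $\tilde R$, so by \Cref{prop:moddis} we have $\disc(\psi)\leq \dis(R)$, and $\psi$ is odd by construction.

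\textbf{Step 3 (Vietoris--Rips carrier).} Assume toward a contradiction that $\dis(R)<\delta_k$ and fix $r\in (\disc(\psi),\delta_k)$. By \Cref{def:mod-disc}, each $y$ admits an open neighborhood $U_y$ with $\diam(\psi(U_y))\leq r$; symmetrize so that $U_{-y}=-U_y$, then pass to a sufficiently fine $\Z/2$-equivariant locally finite refinement $\{V_i\}$ in which any finitely many $V_i$ with common intersection have $\bigcup_i \psi(V_i)$ of diameter at most $r$. A $\Z/2$-equivariant partition of unity $\{\lambda_i\}$ subordinate to $\{V_i\}$ then yields a continuous $\Z/2$-equivariant map
$$\Phi:\Sp^{2k+1}\longrightarrow \bigl|\mathrm{VR}(\Sp^1;r)\bigr|,\qquad \Phi(y):=\sum_i \lambda_i(y)\,\psi(y_i),$$
with $y_i\in V_i$ chosen equivariantly; the image lies in a genuine simplex by the diameter control.

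\textbf{Step 4 (Borsuk--Ulam contradiction).} By the Adamaszek--Adams theorem, for $r\in(\tfrac{2\pi(k-1)}{2k-1},\delta_k]$ there is a $\Z/2$-equivariant homotopy equivalence $|\mathrm{VR}(\Sp^1;r)|\simeq \Sp^{2k-1}$ with the standard antipodal action. Composing with $\Phi$ yields a continuous odd map $\Sp^{2k+1}\to \Sp^{2k-1}$, contradicting Borsuk--Ulam. Hence $\dis(R)\geq \delta_k$. The even-dimensional case is identical: a correspondence between $\Sp^1$ and $\Sp^{2k}$ produces, by the same construction, an odd continuous map $\Sp^{2k}\to \Sp^{2k-1}$, again forbidden by Borsuk--Ulam since $2k>2k-1$.

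The main obstacle is Step~3: one must refine the cover $\{U_y\}$ so that overlaps of finitely many members still have jointly $r$-bounded $\psi$-image (beyond the pointwise control provided by $\disc(\psi)<r$), and simultaneously arrange both the cover and its partition of unity to commute with the antipodal $\Z/2$-action while keeping $\Phi$ continuous. The precise value $\delta_k=2\pi k/(2k+1)$ is dictated by the Adamaszek--Adams threshold at which $|\mathrm{VR}(\Sp^1;r)|$ jumps from the homotopy type of $\Sp^{2k-1}$ to that of $\Sp^{2k+1}$, making this the sharpest dimension drop the VR filtration of $\Sp^1$ can provide at distortion scale $\delta_k$.
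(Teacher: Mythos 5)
First, a point of comparison: the paper does not prove \Cref{thm:lower-bound} at all; it quotes it from \cite{adams2022gromov}, so your proposal must be measured against the argument given there. Your Steps 1--3 do track that argument: the antipodal symmetrization of the correspondence (which indeed preserves distortion because both antipodal maps are isometries and $d(-x,x')=\pi-d(x,x')$), the odd subordinate map built from a fundamental domain (the ``helmet trick''), the bound $\disc(\psi)\leq\dis(R)$ via \Cref{prop:moddis}, and the partition-of-unity construction of a continuous odd map $\Phi:\Sp^{2k+1}\to|\mathrm{VR}(\Sp^1;r)|$ for $r<\delta_k$. The refinement you flag as the main obstacle is in fact routine on a compact sphere: take a symmetric net of balls whose radius is a quarter of a Lebesgue number of the symmetrized cover $\{U_y\}$; balls meeting at a common point have union inside a single $U_y$, which gives the joint diameter control, and averaging a partition of unity with its antipodal translate gives equivariance.

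The genuine gap is Step 4. Adamaszek--Adams compute only the \emph{non-equivariant} homotopy type $\mathrm{VR}(\Sp^1;r)\simeq\Sp^{2k-1}$; they do not supply a $\Z/2$-equivariant homotopy equivalence, and the non-equivariant statement alone cannot power a Borsuk--Ulam obstruction here, because $\mathrm{VR}(\Sp^1;r)$ is infinite-dimensional and a free $\Z/2$-space that is merely homotopy equivalent to $\Sp^{2k-1}$ can have infinite $\Z/2$-index: for instance $\Sp^{2k-1}\times\Sp^{\infty}$ with the trivial action on the first factor and the antipodal action on the second is free, homotopy equivalent to $\Sp^{2k-1}$, yet admits odd maps from spheres of every dimension. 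So the existence of your odd map $\Phi$ is not by itself contradictory. The missing equivariant input is precisely the heart of the cited proof: \cite{adams2020metric,adams2022gromov} construct an explicit odd continuous map from the Vietoris--Rips complex (or metric thickening) of $\Sp^1$ at scale $r<\delta_k$ to $\Sp^{2k-1}$, sending a formal convex combination $\sum_i\lambda_i x_i$ to the normalization of $\sum_i\lambda_i\,\gamma_{2k-1}(x_i)$; well-definedness (the convex hull of $\gamma_{2k-1}(A)$ misses the origin whenever $\diam(A)<\delta_k$) rests on the facial structure of the Barvinok--Novik orbitope discussed in this paper. Composing with $\Phi$ then yields an odd continuous map $\Sp^{2k+1}\to\Sp^{2k-1}$ (respectively $\Sp^{2k}\to\Sp^{2k-1}$) and the classical Borsuk--Ulam contradiction. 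Two smaller repairs: you must also take $r>\tfrac{2\pi(k-1)}{2k-1}$ (always possible, since this threshold is below $\delta_k$), and you should fix one open-versus-closed Vietoris--Rips convention and stay away from the endpoint $r=\delta_k$, as your choice $r<\delta_k$ in Step 3 already does despite the closed interval written in Step 4.
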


\begin{remark}
    Note that:
\begin{itemize}
\item the parameter $\delta_k$ above coincides with the edge length (in the geodesic sense) of an odd regular polygon inscribed in the unit circle $\Sp^1$. For example, $\delta_1=\tfrac{2\pi}{3}$, which corresponds to the distance between vertices of an equilateral triangle inscribed in the unit circle. 
\item the theorem of course implies that $\dgh(\Sp^1,\Sp^{2k})$ and $\dgh(\Sp^1,\Sp^{2k+1})$ are both bounded below by $\tfrac{\delta_k}{2}.$
\end{itemize}
\end{remark}

\Cref{thm:lower-bound} above is intimately related to the following complementary theorem. Recall that, for integers $n\geq m$, a function $f:\Sp^n\to \Sp^m$ is said to be antipode preserving (or just antipodal), if $f(-x) = - f(x)$ for all $x\in \Sp^n$.
\begin{theorem}[{\cite[Theorems 1.3 and 5.1]{adams2022gromov}}]\label{thm:disc-lower-bound}
Let $f:\Sp^{2k+1}\to \Sp^1$ be any antipodal function. Then $\mathrm{disc}(f)\geq \delta_k.$ Similarly, $\mathrm{disc}(f)\geq \delta_k$ for any antipodal function $f:\Sp^{2k}\to \Sp^1.$
\end{theorem}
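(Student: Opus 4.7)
Plan. I would prove the theorem by contraposition, using the Vietoris--Rips filtration of $\Sp^1$ as an intermediary between the discontinuity hypothesis on $f$ and the classical Borsuk--Ulam theorem. Suppose $\mathrm{disc}(f)<\delta_k$ for some antipodal $f:\Sp^{2k+1}\to\Sp^1$, and fix $\delta\in(\mathrm{disc}(f),\delta_k)$. By the definition of $\mathrm{disc}(f)$, each $y\in \Sp^{2k+1}$ admits an open neighborhood $U_y$ with $\diam(f(U_y))\le \delta$; taking a finite subcover of $\Sp^{2k+1}$ and its Lebesgue number, one obtains a uniform radius $r>0$ such that $\diam(f(B(y,r)))\le \delta$ for every $y$.

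Next I would upgrade $f$ to a simplicial approximation. Pick an antipodally symmetric triangulation $K$ of $\Sp^{2k+1}$ of mesh less than $r/2$ (any sufficiently fine barycentric subdivision of an antipodal triangulation suffices). Define $\tilde f$ on the vertices of $K$ by $\tilde f(v):=f(v)$. For any simplex $\sigma=[v_0,\ldots,v_m]\in K$, all vertices $v_i$ lie in a common ball $B(v_0,r)$, so the image $\{f(v_i)\}\subset \Sp^1$ has diameter at most $\delta$ and therefore spans a simplex of the Vietoris--Rips complex $\mathrm{VR}(\Sp^1;\delta)$. This produces a simplicial map $\tilde f:K\to \mathrm{VR}(\Sp^1;\delta)$ that is antipodal on the vertex set, hence antipode-equivariant on the geometric realization, giving a continuous antipodal map $F:\Sp^{2k+1}\to |\mathrm{VR}(\Sp^1;\delta)|$.

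Finally I would invoke the equivariant homotopy classification of the Vietoris--Rips filtration of the circle from \cite{adams2020metric}: for any $\delta<\delta_k$ the complex $|\mathrm{VR}(\Sp^1;\delta)|$ is antipode-equivariantly homotopy equivalent to an odd-dimensional sphere $\Sp^{2j+1}$ with $j\le k-1$. Post-composing $F$ with this equivalence yields a continuous antipodal map $\Sp^{2k+1}\to \Sp^{2j+1}$ with $2j+1\le 2k-1<2k+1$, contradicting Borsuk--Ulam and establishing $\mathrm{disc}(f)\ge \delta_k$. The statement for antipodal $f:\Sp^{2k}\to\Sp^1$ follows from the identical construction, now ending in a continuous antipodal map $\Sp^{2k}\to\Sp^{2j+1}$ with $2j+1\le 2k-1<2k$, again ruled out by Borsuk--Ulam.

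The main obstacle is the second step: producing an honest antipodally symmetric triangulation at arbitrarily fine scale and verifying that the vertex-wise map really is simplicial into $\mathrm{VR}(\Sp^1;\delta)$ (this rests entirely on the uniform radius from the first step). A secondary but critical input that I would quote rather than reprove is the \emph{equivariant} nature of the homotopy equivalence $|\mathrm{VR}(\Sp^1;\delta)|\simeq \Sp^{2k-1}$ from \cite{adams2020metric}, which is what allows the antipodality of $F$ to transfer to an antipodal map between genuine spheres on which Borsuk--Ulam can be applied.
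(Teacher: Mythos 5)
The paper does not actually prove \Cref{thm:disc-lower-bound}: it is imported verbatim from \cite{adams2022gromov}, so there is no internal argument to compare against. Your plan is essentially the strategy used there and in \cite{adams2020metric}: convert an odd map with modulus of discontinuity below $\delta_k$ into a continuous $\mathbb{Z}/2$-equivariant map into a Vietoris--Rips construction on $\Sp^1$ at scale $\delta<\delta_k$ (the cited papers use the metric thickening and a partition-of-unity argument where you use a symmetric triangulation and a simplicial vertex map), and then rule out such a map by Borsuk--Ulam using the equivariant topology of that complex. Your first two steps (the uniform radius from the Lebesgue number, and the vertex-wise map on a fine centrally symmetric triangulation being simplicial into $\mathrm{VR}(\Sp^1;\delta)$ and equivariant on realizations) are sound.

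The one step that needs repair is the final quoted input. As stated --- ``for any $\delta<\delta_k$ the complex $|\mathrm{VR}(\Sp^1;\delta)|$ is antipode-equivariantly homotopy equivalent to $\Sp^{2j+1}$ with $j\le k-1$'' --- it is both misattributed and false at the critical scales: with your closed convention (simplices are finite sets of diameter at most $\delta$), at $\delta=\delta_j$ for $j<k$ the complex has the homotopy type of a wedge of even-dimensional spheres, and the equivariant refinement of the homotopy-type computation (due to Adamaszek and Adams) is not what \cite{adams2020metric} establishes. What that reference does provide --- and all your argument needs --- is a $\mathbb{Z}/2$-equivariant continuous map $|\mathrm{VR}(\Sp^1;\delta)|\to\Sp^{2k-1}$ for every $\delta<\delta_k$: send a point $\sum_i\lambda_i t_i$ of the realization to $\sum_i\lambda_i\,\gamma_{2k-1}(t_i)\in\R^{2k}$ and normalize. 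This is linear on simplices (hence continuous), odd because $\gamma_{2k-1}$ is odd, and it avoids the origin because $0\in\conv(\gamma_{2k-1}(A))$ forces $\diam(A)\geq\delta_k$ (the regular $(2k+1)$-gon being the extremal case) --- the Barvinok--Novik orbitope fact \cite{barvinok2008centrally} used in exactly this way in \cite{adams2020metric,adams2022gromov}. Composing with your $F$ then gives a continuous odd map $\Sp^{2k+1}\to\Sp^{2k-1}$ (resp.\ $\Sp^{2k}\to\Sp^{2k-1}$), contradicting Borsuk--Ulam. Alternatively, you could salvage your formulation by choosing $\delta$ strictly between consecutive critical values and citing an equivariant version of the homotopy equivalence, but the moment-curve map is the route the literature actually takes and requires no such extra input.
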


The relationship between \Cref{thm:lower-bound} and \Cref{thm:disc-lower-bound} is explained in \cite[Remark 7.3]{adams2022gromov}; see also \cite[Section 5]{lim2021gromov}. 
\section{The general construction of EPCs}

Let $X,Y\in\mathcal{M}$ be two compact metric spaces such that there exists an embedding $\iota:X\to Y$  of $X$ into $Y$ (i.e. a homeomorphism onto its image). Importantly,  \emph{one does not require the embedding to be isometric}.  Consider then any \emph{closest point projection function} $\Pi_\iota:Y\to \iota(X)$: that is, $\Pi_\iota$ satisfies that for any $y\in Y$, $\Pi_\iota(y)$ is such that $$\rho_{\Pi_\iota}(y) := d_Y(y,\Pi_\iota(y)) = \min_{y'\in \iota(X)} d_Y(y,y').$$  Since $\iota(X)\subset Y$, any such function $\Pi_\iota$ is always surjective but it might not be injective. 

\begin{definition}\label{def:epc} The \emph{embedding-projection correspondence} induced by $\iota$ is the correspondence $R_\iota \in \mathcal{R}(X,Y)$ defined as:
$$R_\iota:=\{(x,y)\in X\times Y|\, \iota(x)=\Pi_\iota(y)\}.$$
By $\psi_\iota:Y\twoheadrightarrow X$ we will denote the function $Y\ni y \mapsto \psi_\iota(y):=\iota^{-1}(\Pi_\iota(y))$.   See \Cref{fig:epc}.
\end{definition}

\begin{figure}
    \centering
    \includegraphics[width=\linewidth]{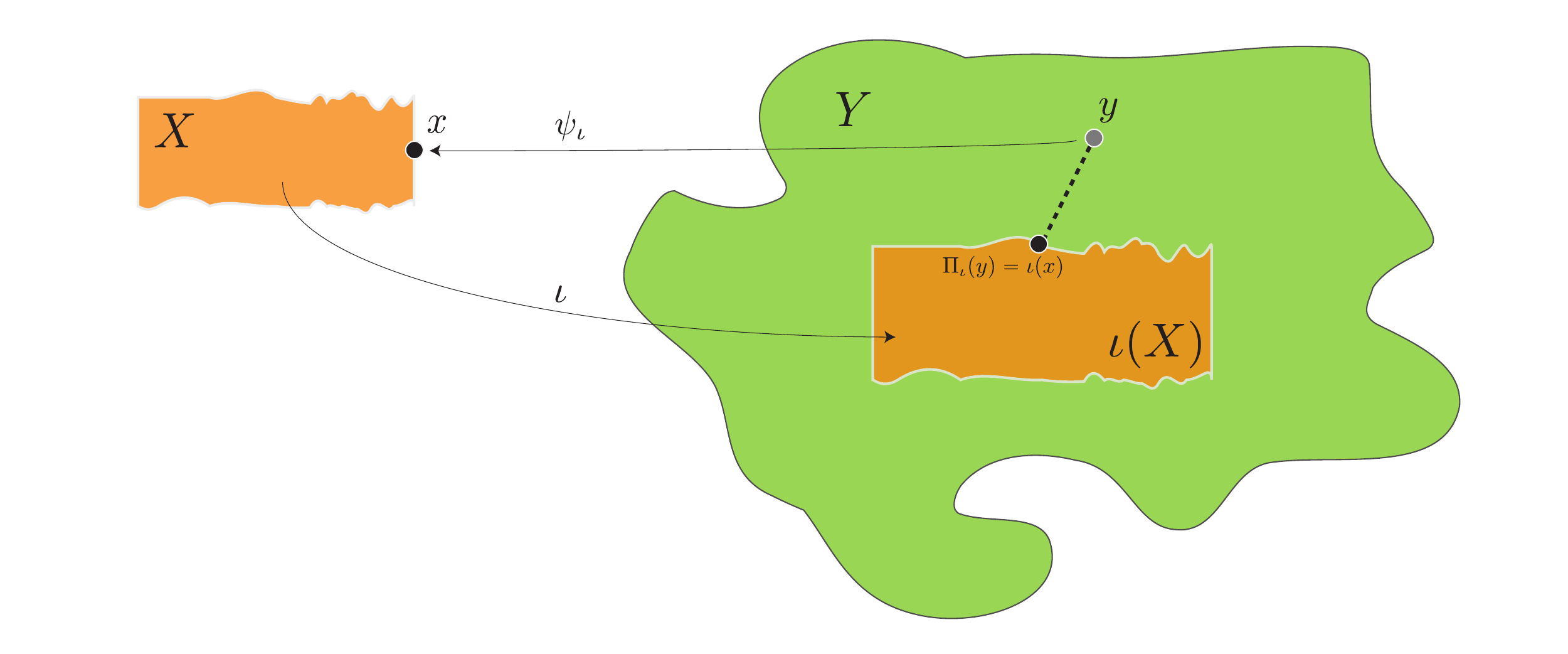}
    \caption{An embedding-projection correspondence. See \Cref{def:epc}.}
    \label{fig:epc}
\end{figure}

We will use the acronym EPC to denote correspondences of the  form described in the definition above and, similarly, we will use the acronym EPS to denote the associated surjections.
\begin{remark}\label{rem:map-corresp} Note that the function $\psi_\iota$ constructed above is subordinate to $R_\iota$ and that, furthermore,  $R_\iota = \mathrm{graph}(\psi_\iota)$ and therefore $\dis(R_\iota)=\dis(\psi_\iota).$ 
\end{remark}

The main motivation behind this definition is to carefully design the embedding $\iota$ so that the distortion of $R_\iota$ is as small as possible. It is not necessarily the case that an isometric embedding $\iota:X\hookrightarrow Y$ will give rise to a low distortion correspondence. Consider for example the case of any equatorial embedding $\iota:\Sp^1\hookrightarrow \Sp^2$. In that case, it is immediate to check that $\dis(R_\iota) = \pi$ which is far from the minimal possible distortion which is known to be $\frac{2\pi}{3}$ \cite[Proposition 1.16]{lim2021gromov}. In fact, results in the the same reference prove that this naive equatorial embedding fails to give good upper bounds in general.

\subsection{Interpretation of EPCs}
\label{sec:interp-EPC}
    For an EPC $R_\iota$ to be ``good" in the sense of having small distortion, it is necessary that both $\iota:X\to Y$ does not distort distances too much and that $\iota(X)$ provides an efficient covering of $Y$. 
    
    \begin{proposition}
        For any $EPC$ $R_\iota$, one has  
    $$\dis(R_\iota)\geq \max\big(\dis(\iota),\rho(\Pi_\iota)\big),$$
where $\rho(\Pi_\iota):=\sup_{y\in Y}\rho_{\Pi_\iota}(y)$ is the \emph{covering radius} of the projection function $\Pi_\iota$.
    \end{proposition}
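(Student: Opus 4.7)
The plan is to exhibit, for each of the two quantities on the right, an explicit family of pairs in $R_\iota$ whose contribution to the supremum defining $\dis(R_\iota)$ realizes that quantity. The starting observation is that if $y \in \iota(X)$, then $\Pi_\iota(y) = y$, since the nearest point in $\iota(X)$ to a point of $\iota(X)$ is that point itself. Hence for every $x \in X$, the pair $(x, \iota(x))$ belongs to $R_\iota$.

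For the bound $\dis(R_\iota) \geq \dis(\iota)$, I would fix arbitrary $x,x' \in X$ and plug the pairs $(x, \iota(x))$ and $(x', \iota(x'))$, which lie in $R_\iota$ by the above observation, into the supremum defining $\dis(R_\iota)$. This produces the inequality
\[
\dis(R_\iota) \geq |d_X(x,x') - d_Y(\iota(x), \iota(x'))|,
\]
and taking the supremum over $x,x' \in X$ gives $\dis(R_\iota) \geq \dis(\iota)$.

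For the bound $\dis(R_\iota) \geq \rho(\Pi_\iota)$, I would fix any $y \in Y$ and set $x = \psi_\iota(y) = \iota^{-1}(\Pi_\iota(y))$. Then $(x,y) \in R_\iota$ by definition of $R_\iota$, and $(x, \iota(x)) = (x, \Pi_\iota(y)) \in R_\iota$ by the initial observation. Choosing these two pairs (which share the same $X$-coordinate) in the supremum gives
\[
\dis(R_\iota) \geq |d_X(x,x) - d_Y(y, \Pi_\iota(y))| = \rho_{\Pi_\iota}(y),
\]
and supping over $y \in Y$ yields $\dis(R_\iota) \geq \rho(\Pi_\iota)$. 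Combining the two inequalities gives the claim. There is no real obstacle here; the only subtlety worth flagging is the idempotency $\Pi_\iota|_{\iota(X)} = \mathrm{id}_{\iota(X)}$, which is what makes both families of pairs legitimately live inside $R_\iota$.
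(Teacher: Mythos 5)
Your proof is correct and is essentially the paper's argument: the key fact that $\Pi_\iota$ fixes $\iota(X)$ (equivalently $\psi_\iota\circ\iota=\mathrm{id}_X$) is used in both, and the pairs you select — $(x,\iota(x)),(x',\iota(x'))$ for the first bound and $(\psi_\iota(y),y),(\psi_\iota(y),\Pi_\iota(y))$ for the second — are exactly the ones implicit in the paper's computation via $\psi_\iota|_{\iota(X)}=\mathrm{id}_X$ and $\psi_\iota(y)=\psi_\iota(\Pi_\iota(y))$.
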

\begin{proof}   
That $\dis(R_\iota)\geq \dis(\iota)$ is clear since $\psi_\iota|_{\iota(X)} = \mathrm{id}_X$ and therefore
$$\dis(R_\iota) = \dis(\psi_\iota)\geq \dis(\psi_\iota|_{\iota(X)}) = \sup_{x,x'\in X}\big|d_X(x,x')-d_Y(\iota(x),\iota(x'))\big|=\dis(\iota).$$  
To obtain $\dis(R_\iota)\geq \rho(\Pi_\iota)$ notice that for any $y\in Y$, $\psi_\iota(y) = \psi_\iota\big(\Pi_\iota(y)\big)$ so that 
$$\rho(\Pi_\iota) = \sup_{y\in Y}\big|d_X(\psi_\iota(y),  \psi_\iota(\Pi_\iota(y)))-d_Y(y,\Pi_\iota(y))\big|\leq \dis(R).$$
\end{proof}

\subsection{Voronoi cells and modulus of discontinuity of $\psi_\iota$}

For each  $x\in X$ consider the $x$-fiber of $\psi_\iota$: $$V_x:=\{y\in Y|\,\Pi_\iota(y)=\iota(x)\} = \{y\in Y|\,\psi_\iota(y)=x\}.$$ In other words, $\overline{V_x}$ is the  Voronoi cell induced by $\iota(x) \in \iota(X)$ on $Y$. Then, we have the following immediate consequence of this definition and the definition of modulus of discontinuity (\Cref{def:mod-disc}).

\begin{proposition}\label{prop:mod-disc}
$\disc(\psi_\iota) = \sup\{d_X(x,x')|\,V_x\cap V_{x'}\neq \emptyset\}.$
\end{proposition}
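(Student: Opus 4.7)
Since the strict fibers $V_x$ of $\psi_\iota$ are pairwise disjoint by construction, the right-hand supremum is only non-trivial if one reads it as ranging over pairs $(x,x')$ with $\overline{V_x}\cap\overline{V_{x'}}\neq\emptyset$ (i.e., Voronoi cells that touch), and this is the reading I adopt. Write $D$ for this supremum. The plan is to verify both inequalities $\disc(\psi_\iota)\geq D$ and $\disc(\psi_\iota)\leq D$.

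For the lower bound, I would fix $x,x'\in X$ and any $y\in\overline{V_x}\cap\overline{V_{x'}}$, and observe that every open neighborhood $U_y$ of $y$ necessarily meets both $V_x$ and $V_{x'}$ (as $y$ is a limit point of each), so $\{x,x'\}\subseteq\psi_\iota(U_y)$ and hence $\diam(\psi_\iota(U_y))\geq d_X(x,x')$. Since this holds for every such $U_y$, \Cref{def:mod-disc} forces $\disc(\psi_\iota)\geq d_X(x,x')$; taking the supremum over pairs gives $\disc(\psi_\iota)\geq D$.

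For the upper bound, fix $y\in Y$ and $\varepsilon>0$, and introduce
\[
A(y):=\{x\in X\,:\,y\in\overline{V_x}\}\;=\;\{x\in X\,:\,d_Y(y,\iota(x))=d_Y(y,\iota(X))\},
\]
which is closed in $X$ (preimage of a point under the continuous map $x\mapsto d_Y(y,\iota(x))$) and has $\diam(A(y))\leq D$ directly from the definition of $D$. The key subclaim is that for some open neighborhood $U_y$ of $y$, $\psi_\iota(U_y)$ is contained in the open $\varepsilon$-neighborhood of $A(y)$ inside $X$; granting this, $\diam(\psi_\iota(U_y))\leq D+2\varepsilon$, and letting $\varepsilon\to 0^+$ yields $\disc(\psi_\iota)\leq D$. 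I would prove the subclaim by contradiction using compactness: if no such $U_y$ works, extract a sequence $y_n\to y$ with $x_n:=\psi_\iota(y_n)$ satisfying $d_X(x_n,A(y))\geq\varepsilon$; compactness of $X$ yields $x_n\to x^*$ along a subsequence, and closedness of $A(y)$ gives $x^*\notin A(y)$. However, $\iota(x_n)=\Pi_\iota(y_n)$ is a closest point, so $d_Y(y_n,\iota(x_n))=d_Y(y_n,\iota(X))$; passing to the limit via continuity of $\iota$ and of $y'\mapsto d_Y(y',\iota(X))$ delivers $d_Y(y,\iota(x^*))=d_Y(y,\iota(X))$, i.e., $x^*\in A(y)$, the contradiction.

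The main obstacle is precisely this compactness argument: one must know that a limit of closest-point labels remains a closest-point label, which is where compactness of $X$ and continuity of $\iota$ are essential. A subsidiary point, implicit in the argument, is that the topological closure $\overline{V_x}$ of the fiber actually coincides with the closed Voronoi cell $\{y:d_Y(y,\iota(x))=d_Y(y,\iota(X))\}$; this holds generically (e.g., for $\iota(X)$ finite in Euclidean or spherical geometry, since strict Voronoi cells are dense in the closed ones) but may require separate justification in more pathological metric settings.
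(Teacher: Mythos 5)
Your argument is sound and is in fact far more detailed than the paper's treatment: the paper offers no proof at all, stating the proposition as an immediate consequence of \Cref{def:mod-disc} and the definition of the fibers. Your reading of the right-hand side with closures, $\overline{V_x}\cap\overline{V_{x'}}\neq\emptyset$, is the intended one (the strict fibers are pairwise disjoint, and the paper's later use of this statement, \Cref{coro:disc-voro}, is phrased in terms of the \emph{closed} fibers $F_{2k+1}(t)$). Your lower bound is the expected short argument, and your upper bound via the set $A(y)$ of closest-point labels, compactness of $X$, continuity of $\iota$, and continuity of $y'\mapsto d_Y(y',\iota(X))$ correctly supplies exactly what the paper leaves implicit. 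The caveat you flag at the end is the right one to flag, and it is where all the content sits: your proof uses the identification $\overline{V_x}=\{y\in Y:\ d_Y(y,\iota(x))=d_Y(y,\iota(X))\}$, which the paper simply asserts (``in other words, $\overline{V_x}$ is the Voronoi cell induced by $\iota(x)$''), but which can fail for arbitrary compact $X,Y$ and an arbitrary closest-point selection $\Pi_\iota$; and when it fails, the stated equality itself can fail under the closure reading --- one can arrange a point $y$ approached by points $y_n$ whose labels $\psi_\iota(y_n)$ run through infinitely many distinct values accumulating at a label that is closest for $y$ but never actually attained near $y$, so that no two distinct fiber closures meet even though $\disc(\psi_\iota)$ is large. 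So the proposition should be read with this identification (equivalently, with $V_x$ replaced by the closed Voronoi cells and the selection $\Pi_\iota$ assumed compatible with them), which holds in the paper's settings of interest such as the TMC fibers; within that framework your proof is complete, and isolating the hypothesis explicitly is an improvement over the paper's ``immediate'' rather than a defect of your argument.
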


\section{EPC constructions for the case of spheres}

We now describe a number of constructions of EPCs between spheres that we have tested exhaustively via computational methods.  As we discuss below, our extensive experimental evidence indicates  that these constructions are optimal \cite{dgh-github}.

\medskip
In what follows, for each $n\in\N$, we view the unit sphere $\Sp^n\subset \R^{n+1}$,when endowed with its geodesic distance $d_n$,  as the compact metric space $(\Sp^n,d_n)$.  Explicitly,  $$d_n(x,x') = \arccos(x\cdot x').$$

\medskip
The constructions we mention below are related to the general question of determining the precise value of $\dgh(\Sp^m,\Sp^n)$ for all $m<n$ which was considered in \cite{lim2021gromov}.

In what follows we will assume the \emph{equatorial} (isometric embedding $\iota_{2k}:\Sp^{2k}\hookrightarrow \Sp^{2k+1}$ arising from the embedding of $\R^{2k+1}\hookrightarrow \R^{2k+2}$ where $$x=(x_1,x_2,\ldots,x_{2k+1})\mapsto (x_{1},x_2,\ldots,x_{2k+1},0).$$  These embeddings, through suitable compositions, induce embeddings $\iota_{m,n}:\Sp^m\hookrightarrow\Sp^n$, for all $n\geq m$. We will henceforth identify $\Sp^m$ with its image in $\Sp^n$ via $\iota_{m,n}$. Similarly, we  consider the surjective projection maps $p_{2k+1}:\Sp^{2k+1}\backslash\{\pm e_{2k+2}\}\twoheadrightarrow \Sp^{2k}$ given by $$(x_1,x_2,\ldots ,x_{2k+1}, x_{2k+2})\mapsto \frac{(x_1,x_2,\ldots, x_{2k+1},0)}{\sqrt{x_1^2+x_2^2+\cdots +x_{2k+1}^2}}.$$

\subsection{Constructions for the case of $\Sp^1$ versus $\Sp^{n}$}

\begin{definition}
Let $n$ be any positive integer. The (projected centrally  symmetric) \emph{trigonometric moment curve} (TMC) of order $n$ is defined as follows.\footnote{For odd $n$ this definition coincides, up to a multiplicative constant, with the symmetric trigonometric moment curve considered in \cite{barvinok2008centrally}.}

When $n=2k+1$ for some $k\geq 1$, $\gamma_{2k+1}:\Sp^1\to \Sp^{2k+1}$ is given by $$t\mapsto \frac{1}{\sqrt{k+1}}\big(\cos(t),\sin(t),\cos(3t),\sin(3t),\ldots, \cos((2k+1)t),\sin((2k+1)t)\big).$$

When $n=2k$ for some $k\geq 1$, $\gamma_{2k}:\Sp^1\mapsto \Sp^{2k}$ is given by 
$$t\mapsto \frac{1}{\sqrt{k + \cos^2((2k+1)t)}}\big(\cos(t),\sin(t),\cos(3t),\sin(3t),\ldots, \cos((2k+1)t)\big).$$

\end{definition}
Note that $\gamma_{n}$ provides an embedding of $\Sp^1$ into $\Sp^{n}$ which we will henceforth refer to as a \emph{TMC embedding}. Note that $R_{\gamma_n}$ is a correspondence between $\Sp^1$ and $\Sp^{n}$ and that $\psi_{\gamma_n}$ is surjection from $\Sp^n$ to $\Sp^1$. We will refer to these correspondences as TMC-EPCs. To simplify notation, we will henceforth use the notation $R_{n}$ instead of $R_{\gamma_{n}}$, $\Pi_n$ instead of $\Pi_{\gamma_n}$ and similarly, $\psi_n$ instead of $\psi_{\gamma_n}$. See \Cref{fig:s1-sn} for an illustration.

\begin{figure}
    \centering
    \includegraphics[width=0.75\linewidth]{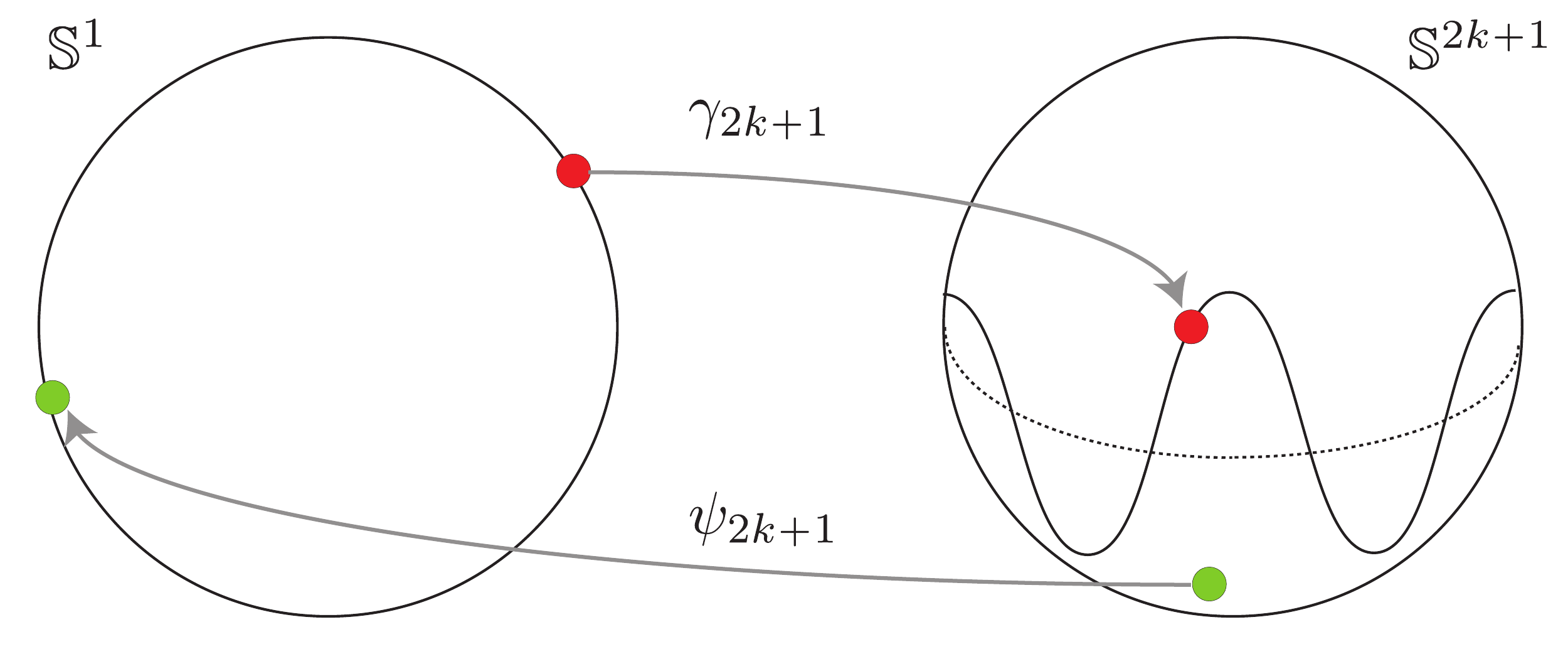}
    \caption{The maps $\gamma_{2k+1}:\Sp^1\to \Sp^{2k+1}$ and $\psi_{2k+1}:\Sp^{2k+1}\to \Sp^1$.}
    \label{fig:s1-sn}
\end{figure}

\begin{remark}\label{rem:antipodal} The following properties of the TMCs $\gamma_n$ and of the maps $\psi_n$ follow from their definitions:
\begin{enumerate}
\item $\gamma_n:\Sp^1\to \Sp^n$ is antipodal.
\item $p_{2k+1}(\gamma_{2k+1}) = \gamma_{2k}.$ 
\item The map $\psi_n:\Sp^n\to \Sp^1$ is antipodal (this follows from item 1).
\item $R_n = \mathrm{graph}(\psi_n)$ (see \Cref{rem:map-corresp}).
\end{enumerate}
\end{remark}

\Cref{thm:lower-bound} above implies that both $\dis(R_{{2k}})$ and $\dis(R_{2k+1})$ must be at least $\delta_k$. Our extensive computational experimentation strongly suggests that $R_{2k}$ and $R_{2k+1}$ are optimal.

 \begin{conjecture}\label{conj:tmc}
 $\dis(R_{{2k+1}}) = \dis(R_{{2k}}) = \frac{2\pi k}{2k+1}$ for all $k\in\N$. \end{conjecture}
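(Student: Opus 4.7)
The lower bounds $\dis(R_{2k}) \geq \tfrac{2\pi k}{2k+1}$ and $\dis(R_{2k+1}) \geq \tfrac{2\pi k}{2k+1}$ are immediate from \Cref{thm:lower-bound}; the entire difficulty lies in proving the matching upper bounds. By \Cref{rem:map-corresp} it suffices to show
\[
|d_{\Sp^1}(\psi_n(y), \psi_n(y')) - d_{\Sp^n}(y, y')| \leq \frac{2\pi k}{2k+1} \quad \text{for all } y, y' \in \Sp^n.
\]
The plan is to handle the odd case $n = 2k+1$ in detail first, and then derive the even case $n = 2k$ by leveraging the identity $p_{2k+1}\circ\gamma_{2k+1} = \gamma_{2k}$ from \Cref{rem:antipodal}(2) together with the odd-case estimates.

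The cornerstone for the odd case is the closed-form inner product
\[
\gamma_{2k+1}(s)\cdot\gamma_{2k+1}(t) \;=\; \frac{1}{k+1}\sum_{j=0}^{k}\cos\!\big((2j+1)(s-t)\big) \;=\; \frac{\sin((2k+2)(s-t))}{2(k+1)\sin(s-t)},
\]
obtained by a geometric series manipulation and essentially the Fej\'er-type kernel underlying the Barvinok--Novik orbitope. This formula establishes three facts at once: (i) $d_{\Sp^{2k+1}}(\gamma_{2k+1}(s), \gamma_{2k+1}(t))$ depends only on $|s-t|$; (ii) the rotation $t\mapsto t+\alpha$ on $\Sp^1$ descends to an isometric rotation of $\Sp^{2k+1}$ preserving $\gamma_{2k+1}(\Sp^1)$, so all Voronoi cells $V_s$ are congruent and all agree with $V_0$ up to this rotation; and (iii) a closed-form expression for $\dis(\gamma_{2k+1})$ is accessible by elementary calculus in one variable.

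The crux is then the analysis of the Voronoi partition $\{V_s\}_{s\in \Sp^1}$ on $\Sp^{2k+1}$. For any $y \in \Sp^{2k+1}$ the function $t\mapsto y\cdot\gamma_{2k+1}(t)$ is a trigonometric polynomial of degree $2k+1$, and $\psi_{2k+1}(y)$ is precisely its argmax set. By describing how this argmax set behaves as $y$ varies, I expect to show that $V_s \cap V_t \neq \emptyset$ forces $d_{\Sp^1}(s,t) \leq \tfrac{2\pi k}{2k+1}$; combined with the antipodality of $\psi_{2k+1}$ and \Cref{thm:disc-lower-bound}, this identifies $\disc(\psi_{2k+1}) = \tfrac{2\pi k}{2k+1}$ via \Cref{prop:mod-disc}. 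To upgrade from modulus of discontinuity to full distortion, for $y\in V_s$, $y'\in V_t$ I would split on $d_{\Sp^n}(y,y')$: when this distance is close to $\pi$, antipodality forces $d_{\Sp^1}(s,t)$ to also be close to $\pi$; when it is small, $y$ and $y'$ lie in the same cell or in adjacent cells and the discontinuity bound controls $d_{\Sp^1}(s,t)$ directly; the intermediate regime is governed by the triangle-inequality bound $|d_{\Sp^n}(y,y') - d_{\Sp^n}(\gamma_n(s), \gamma_n(t))| \leq 2\rho(\Pi_n)$ combined with the explicit distortion of $\gamma_n$ from the previous paragraph.

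The hardest part will be this Voronoi step: extracting a sufficiently sharp description of the maximizers of $y\cdot \gamma_{2k+1}(t)$ across all $y\in \Sp^{2k+1}$ to pin down the cell-boundary structure and, more delicately, to control the \emph{shape} (not merely the diameter) of each $V_s$. The even case is the secondary obstacle: although $\gamma_{2k} = p_{2k+1}\circ \gamma_{2k+1}$, the map $p_{2k+1}$ is non-isometric and the extra normalization factor $\sqrt{k+\cos^2((2k+1)t)}$ breaks the exact rotational symmetry available on $\Sp^{2k+1}$. I would try to reduce the even case to the odd case by showing that a worst-case pair $(y,y')$ for $\dis(R_{2k})$ can always be lifted through $p_{2k+1}$ to a pair that is suboptimal for $\dis(R_{2k+1})$, or else handled by an explicit symmetry argument rooted in the antipodality of $\psi_{2k}$.
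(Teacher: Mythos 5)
You should first note that the statement you are asked to prove is posed as a \emph{conjecture} in the paper: the authors do not prove it. What the paper actually establishes is (i) the lower bound, imported from \Cref{thm:lower-bound}; (ii) a symmetry reduction (\Cref{rem:dis}, \Cref{prop:eq-dis}, \Cref{prop:B}, \Cref{prop:dis2k}) showing that the whole conjecture would follow from the single condition $B^*(\delta_k)$ involving only pairs of points on $\partial F_{2k+1}(0)$; (iii) the case $k=1$, where $\partial F_3(0)$ is described explicitly (\Cref{thm:fiber-3}) and $B^*(\tfrac{2\pi}{3})$ is then verified with computer assistance (\Cref{thm:dis-3}); and (iv) the modulus-of-discontinuity analogue $\disc(\psi_{2k+1})=\disc(\psi_{2k})=\delta_k$ (\Cref{thm:disc-gen-k}), which uses the duality with the Barvinok--Novik polytope. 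Your proposal is a plan rather than a proof: the two steps you yourself flag as hardest are exactly where the open content lies. In particular, your Voronoi-adjacency claim ($V_s\cap V_t\neq\emptyset\Rightarrow d_1(s,t)\leq\delta_k$) is true, but it is \Cref{coro:voro-intersect}, whose proof in the paper goes through \Cref{prop:voro-conv} and Vinzant's edge theorem for $\mathcal{B}_{2k+2}$ (\Cref{thm:gen-edges}); ``describing how the argmax set behaves'' is not a routine calculus exercise but precisely this nontrivial polytope input.

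The concrete gap that makes your outline fail even as a plan is the upgrade from modulus of discontinuity to distortion. Your ``intermediate regime'' estimate $|d_1(s,t)-d_{2k+1}(y,y')|\leq \dis(\gamma_{2k+1})+2\rho(\Pi_{2k+1})$ is quantitatively too weak: already for $k=1$ the paper's estimates $\dis(\gamma_3)\approx 0.8128$ and $\rho(\Pi_3)\approx 0.9229$ give a bound $\approx 2.66$, well above $\delta_1=\tfrac{2\pi}{3}\approx 2.09$, and the paper explicitly remarks that this style of bound only yields $\dgh(\Sp^1,\Sp^3)\lesssim 1.33$ versus the target $\tfrac{\pi}{3}$. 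Even granting \Cref{conj:cov} ($\rho(\Pi_{2k+1})\leq\delta_k/2$), you would get $\dis(\gamma_{2k+1})+\delta_k>\delta_k$. Moreover, in your ``small distance'' case the discontinuity bound does not control the label difference the way you need: adjacent cells may carry labels already $\delta_k$ apart, so nearby points in non-intersecting cells are not handled. This is why the paper instead funnels everything through the fiber condition $B^*(\delta_k)$, where the interaction between $d_1(0,t)$ and $d_{2k+1}(q,T_tq')$ for boundary points $q,q'$ of a single fiber is what must be controlled; for $k\geq 2$ this remains open precisely because the shape of $\partial F_{2k+1}(0)$ (equivalently, the full facial structure of $\mathcal{B}_{2k+2}$) is not known. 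Your even-dimensional reduction via $p_{2k+1}$ is also only a hope as stated; note the paper's \Cref{prop:dis2k} gives the inequality in the useful direction ($\dis(R_{2k+1})\geq\dis(R_{2k})$), so the even case would follow from the odd case together with the lower bound, and no lifting argument is needed.
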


Similarly, \Cref{thm:disc-lower-bound} implies that $\mathrm{disc}(\psi_{2k+1})$ and $\mathrm{disc}(\psi_{2k})$ are both bounded below by $\delta_k$ which motivates the following. 
\begin{conjecture}\label{conj:disc-psi}
$\mathrm{disc}(\psi_{2k+1})=\mathrm{disc}(\psi_{2k})=\delta_k.$
\end{conjecture}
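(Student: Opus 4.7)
The lower bounds $\disc(\psi_{2k+1}) \geq \delta_k$ and $\disc(\psi_{2k}) \geq \delta_k$ are immediate: $\psi_n$ is antipodal by \Cref{rem:antipodal}, so \Cref{thm:disc-lower-bound} applies directly. The content of the conjecture therefore lies entirely in the matching upper bound, and my plan is to reformulate it as a face-structure question about the convex hull of the TMC.

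The first step is to invoke \Cref{prop:mod-disc}, which rewrites $\disc(\psi_n) \leq \delta_k$ as the implication
$$V_x \cap V_{x'} \neq \emptyset \;\Longrightarrow\; d_1(x,x') \leq \delta_k.$$
Since geodesic distance on $\Sp^n$ is monotone decreasing in the ambient inner product, a point $y \in \Sp^n$ lies in $V_x \cap V_{x'}$ iff both $\gamma_n(x)$ and $\gamma_n(x')$ maximize the linear functional $z \mapsto \langle y, z\rangle$ over $\gamma_n(\Sp^1)$, equivalently over its convex hull $K_n := \conv(\gamma_n(\Sp^1)) \subset \R^{n+1}$. Thus $V_x \cap V_{x'} \neq \emptyset$ iff the chord $[\gamma_n(x), \gamma_n(x')]$ lies in a proper face of $K_n$, and the conjecture reduces to showing that \emph{every supported chord of $K_n$ has endpoints within angular distance $\delta_k$ on $\Sp^1$}.

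For the odd case $n = 2k+1$, $K_{2k+1}$ is (up to scaling) the Barvinok-Novik polytope associated with the centrally symmetric trigonometric moment curve. I would prove the required face characterization by exhibiting, for each pair $s, t \in \Sp^1$ with $d_1(s,t) \leq \delta_k$, a unit vector $y_{s,t} \in \Sp^{2k+1}$ so that the trigonometric polynomial $Q(u) := \langle y_{s,t}, \gamma_{2k+1}(u)\rangle$, which involves only the odd harmonics $\{\cos((2j+1)u), \sin((2j+1)u)\}_{j=0}^{k}$, attains its global maximum exactly at $u=s$ and $u=t$. Writing $M - Q(u)$ as a nonnegative trigonometric polynomial with prescribed double zeros at $s, t$ plus $k-1$ auxiliary double nodes placed in the arc complementary to $[s,t]$ produces a degree-$(2k+1)$ odd-harmonic polynomial whose Fourier coefficients give $y_{s,t}$. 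The inequality $d_1(s,t) \leq \delta_k = \tfrac{2\pi k}{2k+1}$ is precisely the slack needed to fit these $k-1$ auxiliary nodes; a Fej\'er--Riesz type counting argument shows that no supporting $y_{s,t}$ can exist when $d_1(s,t) > \delta_k$, consistent with \Cref{thm:disc-lower-bound}.

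For the even case $n = 2k$ the situation is subtler, because the $t$-dependent normalization $\sqrt{k + \cos^2((2k+1)t)}$ in $\gamma_{2k}$ makes $\langle y, \gamma_{2k}(u)\rangle$ not a trigonometric polynomial in $u$, and the pure polynomial construction does not transfer verbatim. My plan is to lift to the un-normalized curve $\tilde\gamma_{2k}(u) = (\cos u, \sin u, \ldots, \sin((2k-1)u), \cos((2k+1)u))$ in $\R^{2k+1}$ whose radial projection is $\gamma_{2k}$, and characterize the 2-faces of the cone $\mathrm{cone}(\tilde\gamma_{2k}(\Sp^1))$: these are in bijection with the supported chords of $K_{2k}$ and can be detected by trigonometric polynomials lacking a $\sin((2k+1)u)$ coefficient (equivalently, polynomials symmetric about the chord midpoint). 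Alternatively, one may try to exploit the relation $p_{2k+1}\circ \gamma_{2k+1} = \gamma_{2k}$ from \Cref{rem:antipodal}, though since $p_{2k+1}$ is not isometric this transfer requires care. The main obstacle I anticipate is precisely this even case: the loss of $\Sp^1$-equivariance means the extremal chord is no longer pinned by symmetry, so sharpness at the $\delta_k$ threshold will demand a polynomial construction carefully adapted to the constrained function class.
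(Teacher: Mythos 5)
Your proposal follows essentially the same route that the paper uses when it later proves this conjecture as \Cref{thm:disc-gen-k}: the lower bound comes from antipodality of $\psi_n$ plus \Cref{thm:disc-lower-bound}, and the upper bound is reduced, via \Cref{prop:mod-disc} (equivalently \Cref{coro:disc-voro}) and the Voronoi-cell/exposed-face duality, to the statement that no linear functional can attain its maximum over the TMC at two parameters more than $\delta_k$ apart. The difference in the odd case is how that convex-geometric fact is obtained: you propose to prove it from scratch by constructing supporting functionals with prescribed double zeros and a Fej\'er--Riesz type zero-counting argument, whereas the paper simply cites the known edge characterization of the Barvinok--Novik polytope (\Cref{thm:gen-edges}, due to Barvinok--Novik and Vinzant) and channels it through \Cref{prop:voro-conv} and \Cref{coro:voro-intersect} (with simpliciality of faces, \Cref{coro:sinn}, available to handle the face-versus-edge bookkeeping implicit in your ``supported chord'' formulation). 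Your sketch is plausible --- it is essentially how the cited results are proved --- but it is precisely the hard, already-published content and is not carried out, so as a self-contained argument that step is a gap; citing \Cref{thm:gen-edges} closes it and makes your odd-case argument the paper's argument.

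The genuine divergence is the even case, which you single out as the main obstacle and for which you only offer speculative plans (a lift to the unnormalized curve, or a transfer along $p_{2k+1}$). The paper does not perform any separate facial analysis for $\gamma_{2k}$: by \Cref{prop:dis2k}, which rests on the identity $\psi_{2k+1}|_{\Sp^{2k}}=\psi_{2k}$ for the equatorial copy of $\Sp^{2k}$, one gets $\disc(\psi_{2k})\leq\disc(\psi_{2k+1})\leq\delta_k$, and the matching lower bound is again \Cref{thm:disc-lower-bound}. So relative to the paper your even-case branch is an unnecessary detour, and as written it is incomplete: the cone over the unnormalized curve does not directly encode the geodesic Voronoi cells of $\gamma_{2k}$, since membership in $V_x$ is governed by maximizing $t\mapsto\langle y,\gamma_{2k}(t)\rangle$, whose $t$-dependent normalization is exactly the issue you flag. (That same subtlety is also why the restriction identity behind \Cref{prop:dis2k} deserves careful verification --- maximizers of a trigonometric polynomial and of that polynomial divided by $\sqrt{k+\cos^2((2k+1)t)}$ need not coincide --- so your instinct that the even case is delicate is not unfounded; but the paper's proof of the present statement handles it by restriction, not by a new facial analysis.)
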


\begin{remark}
    We point out that \Cref{conj:tmc}, if true, would imply \Cref{conj:disc-psi}. Indeed, suppose for example that $\dis(R_{2k+1}) = \delta_k$. Then, \Cref{prop:moddis}  together with the fact that $\psi_{2k+1}$ is subordinate to $R_{2k+1}$, would imply that $\delta_k \geq \mathrm{disc}(\psi_{2k+1})$. Then, the equality $\mathrm{disc}(\psi_{2k+1})=\delta_k$ would follow from \Cref{rem:antipodal} and \Cref{thm:disc-lower-bound}.

We nevertheless pose both conjectures having in mind that it might be easier to arrive at the former than  the latter. See \Cref{sec:mod-disc-3-min} and \Cref{sec:dis-3-min}.
\end{remark}

We first point out that it would be enough to establish \Cref{conj:tmc} and \Cref{conj:disc-psi} above only for $R_{2k+1}$ (respectively, $\psi_{2k+1}$) since we have the following.
\begin{proposition}\label{prop:dis2k}
It holds that $\dis(R_{{2k+1}}) \geq  \dis(R_{{2k}})$ and $\disc(\psi_{2k+1})\geq \disc(\psi_{2k})$.
\end{proposition}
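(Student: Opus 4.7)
The plan is to construct a continuous map $s:\Sp^{2k}\to\Sp^{2k+1}$ satisfying $\psi_{2k+1}\circ s=\psi_{2k}$. If such $s$ can additionally be chosen to be an isometric embedding, both inequalities will follow immediately. For the distortion, the chain
\[
\bigl|d_1(\psi_{2k}(y),\psi_{2k}(y'))-d_{2k}(y,y')\bigr|=\bigl|d_1(\psi_{2k+1}(s(y)),\psi_{2k+1}(s(y')))-d_{2k+1}(s(y),s(y'))\bigr|\leq\dis(R_{2k+1})
\]
is immediate from the isometry of $s$ together with $\psi_{2k+1}\circ s=\psi_{2k}$. For the modulus of discontinuity, continuity of $s$ suffices: any open neighborhood $W$ of $s(y)$ with $\diam(\psi_{2k+1}(W))\leq\delta$ pulls back to the open neighborhood $s^{-1}(W)$ of $y$, with $\psi_{2k}(s^{-1}(W))=(\psi_{2k+1}\circ s)(s^{-1}(W))\subseteq\psi_{2k+1}(W)$, so that $\disc(\psi_{2k})\leq\disc(\psi_{2k+1})$.

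The main obstacle will be constructing $s$. The obvious candidate, the equatorial embedding $\Phi:\Sp^{2k}\hookrightarrow\Sp^{2k+1}$, $y\mapsto(y,0)$, is isometric but fails the required identity: setting $a_y(t):=\langle y,(\cos t,\sin t,\cos 3t,\ldots,\cos(2k+1)t)\rangle$, a direct computation gives $\psi_{2k+1}(\Phi(y))=\arg\max_t a_y(t)$, whereas $\psi_{2k}(y)=\arg\max_t a_y(t)/\sqrt{k+\cos^2((2k+1)t)}$, and the $t$-dependent denominator makes these argmaxes generically distinct. Since every isometric embedding $\Sp^{2k}\hookrightarrow\Sp^{2k+1}$ has as image a totally geodesic equatorial copy of $\Sp^{2k}$, and is thus $\Phi$ composed with an ambient isometry, the isometric-section approach ultimately reduces to finding a rotation $O\in O(2k+2)$ with $\psi_{2k+1}\circ O\circ\Phi=\psi_{2k}$, an over-determined condition that in all likelihood admits no solution.

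If a global isometric section does not exist, the fallback is to split the argument. For the modulus-of-discontinuity inequality a merely continuous section suffices, and I would construct one by tilting $\Phi$ into the $e_{2k+2}$ direction: set $s(y):=\cos\theta(y)\,\Phi(y)+\sin\theta(y)\,e_{2k+2}$, with $\theta(y)$ chosen by the first-order condition
\[
\tan\theta(y)=\frac{a_y(\psi_{2k}(y))\sin((2k+1)\psi_{2k}(y))}{k+\cos^2((2k+1)\psi_{2k}(y))}
\]
so that $t=\psi_{2k}(y)$ is a critical point of $t\mapsto s(y)\cdot\gamma_{2k+1}(t)$; one would then verify that this critical point is the global maximum and that $\theta$ depends continuously on $y$ away from a low-dimensional singular set. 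For the distortion inequality I would argue pointwise: for each pair $(y,y')\in\Sp^{2k}\times\Sp^{2k}$, produce pair-dependent lifts $z\in\psi_{2k+1}^{-1}(\psi_{2k}(y))$ and $z'\in\psi_{2k+1}^{-1}(\psi_{2k}(y'))$ satisfying $d_{2k+1}(z,z')=d_{2k}(y,y')$, via a continuity/intermediate-value argument on the one-parameter family $\{\cos\theta\,\Phi(y)+\sin\theta\,e_{2k+2}\}_\theta$; the inequality then follows pair-by-pair. The hardest step will be this last one: verifying that the Voronoi cells $\psi_{2k+1}^{-1}(x)\subseteq\Sp^{2k+1}$ are sufficiently thick, in pairs, to realize the prescribed $\Sp^{2k+1}$-distance matching $d_{2k}(y,y')$.
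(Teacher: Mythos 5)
Your two reductions are sound: an isometric section $s$ with $\psi_{2k+1}\circ s=\psi_{2k}$ would give the distortion inequality, and a merely continuous one would give the modulus-of-discontinuity inequality. But the proposal never produces $s$, and both fallback routes leave the essential point unproved. The tilted section is only arranged to make $t=\psi_{2k}(y)$ a \emph{critical} point of $t\mapsto s(y)\cdot\gamma_{2k+1}(t)$; nothing shows it is the \emph{global} maximum, and continuity ``away from a low-dimensional singular set'' is not enough for your own modulus-of-discontinuity argument, which uses the factorization at every point (moreover, since $\psi_{2k}$ is itself discontinuous, an exactly intertwining continuous $s$ would have to land on Voronoi boundaries with tie-breaking matched to that of $\psi_{2k}$, a constraint you do not address). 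Likewise, the pointwise lifting for the distortion inequality rests on the unproved claim that the distances realizable between the fibers $\psi_{2k+1}^{-1}(t)$ and $\psi_{2k+1}^{-1}(t')$ include every value $d_{2k}(y,y')$ realizable by the corresponding fibers of $\psi_{2k}$; you correctly flag this as the hardest step, but as written it is a restatement of what must be shown. So this is a plan, not a proof.

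That said, your diagnosis of the obstacle is correct and important: the paper's own one-line proof invokes precisely the identity $\psi_{2k+1}|_{\Sp^{2k}}=\psi_{2k}$ for the equatorial copy of $\Sp^{2k}$, and this identity fails for exactly the reason you give. For equatorial $y$ one maximizes $a_y(t)$ to compute $\psi_{2k+1}(y)$ but $a_y(t)/\sqrt{k+\cos^2((2k+1)t)}$ to compute $\psi_{2k}(y)$; already for $k=1$ and $y=(1,0,0)$ the first argmax is $t=0$ while the second is not, since $\gamma_2(\pi/6)$ lies at distance $\pi/6$ from $y$ whereas $\gamma_2(0)$ lies at distance $\pi/4$. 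What is true, because the normalizing factors are then $t$-independent, is the odd-to-odd identity $\psi_{2k+1}|_{\Sp^{2k-1}}=\psi_{2k-1}$ (consistent with the remark that $F_{2k+1}(0)\cap\Sp^{2k-1}=F_{2k-1}(0)$), which yields $\dis(R_{2k+1})\geq\dis(R_{2k-1})$ and $\disc(\psi_{2k+1})\geq\disc(\psi_{2k-1})$, but not the stated comparison with $R_{2k}$. So the even-dimensional comparison genuinely needs an argument of the kind you are attempting (or a repaired restriction argument), and neither your proposal nor the restriction claim as currently stated supplies it.
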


\begin{proof}
These follow from the fact  that $\psi_{2k+1}|_{\Sp^{2k}} = \psi_{2k}$
where the precise copy of $\Sp^{2k}\subset \Sp^{2k+1}$ we mean is the equatorial one, as mentioned at the beginning of this section.  
\end{proof}

\begin{remark}[\faLaptopCode]
We have run extensive experimentation in relation to \Cref{conj:tmc} for the values of $k=1,2,3,4$. See our GitHub repository \cite[Function \texttt{TestDistortionRn.m}]{dgh-github}. Our results strongly suggest that $R_{2k}$ and $R_{2k+1}$ are  optimal correspondences (for the corresponding values of $k$). 
\end{remark}

\begin{example}
For example,  \Cref{conj:tmc}, if true would imply that:
\begin{itemize}
\item the distortion of the EPC between $\Sp^1$ and $\Sp^2$ induced by the curve $$\gamma_2(t) = \frac{1}{\sqrt{1+\cos^2(3t)}}\big(\cos(t),\sin(t),\cos(3t)\big)$$ is $\frac{2\pi}{3}.$

\item the distortion of the EPC between $\Sp^1$ and $\Sp^3$ induced by the curve  $$\gamma_3(t) = \frac{1}{\sqrt{2}}\big(\cos(t),\sin(t),\cos(3t),\sin(3t)\big)$$ 
is also $\frac{2\pi}{3}.$
\end{itemize}
\end{example}

\subsection{Cartoonizations and other constructions \extension}
During our work, we used the term \emph{cartoonization}  to refer to the process of  simplification of the TMC-ECPs. Some of the ideas behind this loosely defined concept involved discretizing $R_n$  or altering the nature of the curve by for example substituting it for a piece-wise geodesic path. For example, the optimal correspondence between $\Sp^1$ and $\Sp^2$ described in \cite[Appendix D]{lim2021gromov-arxiv} arose as a cartoonization of $R_2$. Similarly, the optimal correspondence constructed in \cite[Proposition 1.16]{lim2021gromov} can be regarded as a cartoonization of $R_2$.

\subsubsection{Examples of cartoonizations \extension}

\subsubsection{Another construction for the case of $\Sp^1$ versus $\Sp^2$} \label{sec:variant-R2}
Consider the following curve/embedding of $\Sp^1$ into $\Sp^2$. Let $\alpha:\Sp^1\to \Sp^2$ be given by 
$$t \mapsto \big(\cos(t)\sqrt{1-z^2(t)},\sin(t)\sqrt{1-z^2(t)},z(t)\big)$$
where $z(t):= 0.15\, \cos(3t)$.

\begin{remark}[\faLaptopCode]
This correspondence  emerged as a variant of $R_{2}$. Through our computational experiments we arrive the the following.
\end{remark}

\begin{conjecture}
    $\dis(R_\alpha)=\frac{2\pi}{3}$.
\end{conjecture}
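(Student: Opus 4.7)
The lower bound $\dis(R_\alpha) \geq \frac{2\pi}{3}$ is immediate from \Cref{thm:lower-bound} applied with $k=1$, since $R_\alpha$ is a correspondence between $\Sp^1$ and $\Sp^2$. The substance lies in the matching upper bound: we must verify that $|d_1(t,t') - d_2(y,y')| \leq \frac{2\pi}{3}$ for every pair $(t,y), (t',y') \in R_\alpha$.

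My first step is to exploit symmetry. Since $z(t + 2\pi/3) = z(t)$, $z(t+\pi) = -z(t)$, and $z(-t) = z(t)$, the embedding $\alpha$ is equivariant under a group of order $12$ generated by rotations of $\Sp^2$ about the $z$-axis by multiples of $2\pi/3$, the antipodal map, and reflection across the $xz$-plane. These are all isometries of both $(\Sp^1, d_1)$ and $(\Sp^2, d_2)$, so they preserve $\psi_\alpha$ and the distortion functional. Combined with the antipodal identities $d_1(t,t') + d_1(t,-t') = d_2(y,y') + d_2(y,-y') = \pi$ and the fact that $(-t',-y') \in R_\alpha$ whenever $(t',y') \in R_\alpha$, this allows me to assume $d_1(t,t') \leq \pi/2$ without loss of generality and to restrict $t$ to a fundamental domain of length $\pi/6$.

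Next I would describe the Voronoi decomposition $\{V_t\}$. Because $\alpha$ is a smooth simple closed curve on $\Sp^2$, each $V_t$ is a strip transverse to the curve whose maximum transverse radius equals the covering radius $\rho(\Pi_\alpha) = \arccos(0.15)$, achieved at the poles $\pm e_3$. A useful preliminary, via \Cref{prop:mod-disc}, is to show $\disc(\psi_\alpha) \leq \frac{2\pi}{3}$: by the $3$-fold rotational symmetry each pole lies on the boundary of exactly three cells $V_{t_0}, V_{t_0 + 2\pi/3}, V_{t_0 + 4\pi/3}$, and these are the only Voronoi vertices, so any pair of touching cells has their indices within $\frac{2\pi}{3}$ of each other on $\Sp^1$.

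The main obstacle is to upgrade this modulus-of-discontinuity bound to the full distortion bound, since $2\rho(\Pi_\alpha) \approx 2.84 > \frac{2\pi}{3}$ makes the naive estimate $d_2(y,y') \leq d_2(\alpha(t), \alpha(t')) + 2\rho(\Pi_\alpha)$ insufficient. I would split into two cases by comparing $d_2(y,y')$ with $d_1(t,t')$. When $d_2(y,y') \geq d_1(t,t')$, the extremal pairs are those with $y, y'$ near opposite poles; after the symmetry reduction this is a low-dimensional family of configurations that can be bounded by explicit spherical-trigonometric calculation exploiting that the pole lies at the common vertex of three cells separated by $\frac{2\pi}{3}$ on $\Sp^1$. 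When $d_2(y,y') \leq d_1(t,t')$, the extremal pairs have $y, y'$ close to the curve itself, where the near-isometric property of $\alpha$ (whose distortion is $O(0.15^2)$) delivers the bound with margin. Given the delicate choice of the constant $0.15$, I anticipate that certified numerical verification via interval arithmetic over the reduced fundamental domain will ultimately be needed to close the final estimates, in the spirit of the computational experiments already documented by the authors.
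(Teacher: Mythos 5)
You have correctly identified the easy half: the lower bound $\dis(R_\alpha)\geq\tfrac{2\pi}{3}$ follows from \Cref{thm:lower-bound} with $k=1$, and your symmetry observations about $\alpha$ (the $2\pi/3$-rotational equivariance, antipodality, and the reflection $t\mapsto -t$) are accurate and would indeed be the natural first reduction. However, the statement you are addressing is posed in the paper as a \emph{conjecture}, supported only by computational experiments, and your proposal does not close the gap either: it is a strategy sketch whose decisive steps are asserted rather than proved. Concretely, the claims that the covering radius $\rho(\Pi_\alpha)=\arccos(0.15)$ is attained exactly at the poles, that the poles are the \emph{only} Voronoi vertices of the decomposition $\{V_t\}$, and that consequently any two touching cells have indices within $\tfrac{2\pi}{3}$ of each other, are all unverified structural statements about the Voronoi tiling induced by $\alpha$; establishing them is comparable in difficulty to the fiber analysis the paper carries out for $\gamma_3$ (\Cref{thm:fiber-3}), and nothing analogous is supplied for $\alpha$. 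You then explicitly defer the remaining quantitative estimates to certified interval arithmetic, so even granting the structural claims, no proof of the upper bound $\dis(R_\alpha)\leq\tfrac{2\pi}{3}$ is actually given.

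A secondary issue is the case analysis itself. In the case $d_2(y,y')\leq d_1(t,t')$ the binding configurations are not pairs $y,y'$ lying near the curve (where $\alpha$ is nearly isometric), but rather pairs of nearby points of $\Sp^2$ sitting in cells with far-apart indices --- for instance two points just on either side of a pole, one in $V_0$ and one in $V_{2\pi/3}$, which already realize $d_1-d_2\approx\tfrac{2\pi}{3}$ with $d_2\approx 0$. This is exactly the regime governed by the condition of type $B(\delta)$ in \Cref{sec:simple-conditions}, and it is where the constant $0.15$ must enter delicately; the ``near-isometry'' argument you invoke there does not address it. So while your overall plan (symmetry reduction, Voronoi/boundary analysis, then a reduction to explicit trigonometric inequalities checked by computer, in the spirit of the paper's treatment of $R_3$ in \Cref{sec:dis-3-min}) is reasonable and consistent with how the authors handle the analogous case of $\Sp^1$ versus $\Sp^3$, as written it contains genuine gaps and does not constitute a proof of the conjecture.
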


\subsubsection{A construction for the case of $\Sp^2$ versus $\Sp^3$}\label{sec:corr-s2-s3}

As a generalization of the construction in \Cref{sec:variant-R2}, we construct a \emph{surface} $\sigma:\Sp^2\to \Sp^3$. We describe $\sigma$ in spherical coordinates on $\Sp^2$: $x,y,z:[0,2\pi)\times [0,\pi]\to \Sp^2$ where $\big(x(\phi,\theta),y(\phi,\theta),z(\phi,\theta)) := \big(\cos(\phi)\sin(\theta),\sin(\phi)\sin(\theta),\cos(\theta)\big).$ Let $$w(\phi,\theta):=\frac13\sin(\theta)\,\sin(2\theta)\,\cos(2\phi).$$

This function is proportional to the  (real) spherical harmonic $Y_{3,2}$ which has tetrahedral symmetry in the sense that its maximum value (as a function on the sphere $\Sp^2$) is attained at the vertices of a regular tetrahedron inscribed in the sphere. Then, define
$$\sigma(\phi,\theta):=\bigg(x(\phi,\theta)\sqrt{1-w^2(\phi,\theta)}, y(\phi,\theta)\sqrt{1-w^2(\phi,\theta)}, z(\phi,\theta)\sqrt{1-w^2(\phi,\theta)},w(\phi,\theta)\bigg).$$

\begin{remark}[\faLaptopCode]
The resulting correspondence $R_\sigma$ was extensively experimentally tested and it was cartoonized in the proof of \cite[Proposition 1.19]{lim2021gromov}. Theorem B in \cite{lim2021gromov} implies that $\dis(R_\sigma)\geq \zeta_2:=\arccos\left(-\frac{1}{3}\right)$.
\end{remark}

\begin{conjecture}
    $\dis(R_\sigma)=\zeta_2$.
\end{conjecture}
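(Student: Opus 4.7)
The lower bound $\dis(R_\sigma) \geq \zeta_2$ is already supplied by Theorem B of \cite{lim2021gromov}, so the entire task is to prove the matching upper bound $\dis(R_\sigma) \leq \zeta_2$. My plan hinges on exploiting the tetrahedral symmetry of $w$ together with a geometric analysis of the Voronoi cells of $\sigma(\Sp^2)$ in $\Sp^3$.

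First, I would nail down the structure of $\sigma$ in Cartesian coordinates. Writing $w(x,y,z) = \tfrac{2}{3}z(x^2-y^2)$, a Lagrange multiplier calculation locates the four maxima of $w$ on $\Sp^2$ at $\tau_1=(\sqrt{2/3},0,1/\sqrt{3})$, $\tau_2=(-\sqrt{2/3},0,1/\sqrt{3})$, $\tau_3=(0,\sqrt{2/3},-1/\sqrt{3})$, $\tau_4=(0,-\sqrt{2/3},-1/\sqrt{3})$, with pairwise inner products $-1/3$, i.e.\ forming a regular tetrahedron with edge length $\zeta_2$. Since $w(-\mathbf{x})=-w(\mathbf{x})$, the map $\sigma$ is antipodal and hence so is $\psi_\sigma$. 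The four points $\tau_i$ realize $\psi_\sigma(e_4)$ as a multi-valued fiber with intrinsic diameter exactly $\zeta_2$, which already saturates Proposition \ref{prop:mod-disc} with $\mathrm{disc}(\psi_\sigma)=\zeta_2$.

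Second, I would reduce the problem by symmetry: the tetrahedral rotation group $T\cong A_4$ acts on $\Sp^2$ preserving $w$ and lifts to $\Sp^3$ (acting on the first three coordinates while fixing $e_4$), while the antipodal $\Z_2$ swaps $e_4$ with $-e_4$. The distortion functional $(y,y')\mapsto |d_{\Sp^2}(\psi_\sigma(y),\psi_\sigma(y'))-d_{\Sp^3}(y,y')|$ is invariant under the diagonal action of $T\times\Z_2$, so it suffices to bound it on a fundamental domain. I would then split into the two signed inequalities: (a) $d_{\Sp^2}(\psi(y),\psi(y'))-d_{\Sp^3}(y,y')\leq\zeta_2$, whose extremizers sit near the poles where $\psi_\sigma$ jumps between tetrahedral vertices; and (b) $d_{\Sp^3}(y,y')-d_{\Sp^2}(\psi(y),\psi(y'))\leq\zeta_2$, controlled by diameters of generic Voronoi fibers $V_x$, which are short geodesic arcs transverse to $\sigma(\Sp^2)$ of length on the order of $2\arcsin(w_{\max})=2\arcsin(4/(9\sqrt{3}))\ll \zeta_2$.

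The decisive and hardest step will be the global bound in case (a), where one must rule out the possibility that some off-symmetric pair $(y,y')$ produces distortion slightly exceeding $\zeta_2$. Two avenues appear feasible. The first is a direct variational argument: parameterize a maximizer $(y^*,y'^*)$, use the Karush-Kuhn-Tucker conditions for the $\arg\min$ defining $\Pi_\sigma$, and show that any critical pair must lie in the orbit of $(e_4,e_4)$ under the symmetry group, reducing the bound to the already-computed modulus of discontinuity. The second, which I expect to be cleaner, is to relate $R_\sigma$ to the piecewise-geodesic \emph{cartoonization} used in \cite[Proposition 1.19]{lim2021gromov}, where an explicit optimal correspondence of distortion $\zeta_2$ is constructed with exactly the same tetrahedral combinatorial pattern; one would then try to produce an isotopy through correspondences connecting $R_\sigma$ to the cartoonization whose distortion is everywhere at most $\zeta_2$, or at the very least a comparison inequality. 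The principal obstacle, in either approach, is the absence of a closed-form expression for $\Pi_\sigma(y)$ away from high-symmetry points, which forces any global bound to either leverage the $T$-symmetry in a nontrivial way or to proceed via a careful discretization argument backed by the computational verification mentioned in the remark preceding the conjecture.
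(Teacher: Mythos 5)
Note first that the paper does not prove this statement at all: it is posed as a conjecture, with only the lower bound $\dis(R_\sigma)\geq \zeta_2$ (imported from Theorem B of \cite{lim2021gromov}) and computational evidence for the matching upper bound. Your write-up is likewise not a proof but a strategy outline, and by your own account the decisive step --- the global upper bound in your case (a) away from high-symmetry configurations --- is left open, with two speculative avenues (a KKT/variational argument, or an ``isotopy through correspondences'' linking $R_\sigma$ to the cartoonization of \cite[Proposition 1.19]{lim2021gromov}) neither of which is carried out, and the second of which has no supporting mechanism: distortion is not known to behave monotonically or continuously in any useful way along a deformation of correspondences, so a comparison with the cartoonized correspondence does not follow from its mere combinatorial resemblance. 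Your symmetry reduction and the computation of the maxima of $w=\tfrac23 z(x^2-y^2)$ at the tetrahedral vertices (pairwise distance $\zeta_2$, $w_{\max}=4/(9\sqrt3)$) are correct, but the observation about $\psi_\sigma(e_4)$ only yields a \emph{lower} bound on $\disc(\psi_\sigma)$ via \Cref{prop:mod-disc}, and since $\disc\leq\dis$ (\Cref{prop:moddis}) this contributes nothing toward the upper bound you need.

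There is also a concrete error in your treatment of case (b). The Voronoi fibers $V_x$ are not uniformly short arcs of length on the order of $2\arcsin(w_{\max})\approx 0.52$: the fiber over a tetrahedral vertex $\tau_i$ extends from $\sigma(\tau_i)$ essentially all the way to the pole $e_4$, i.e.\ has length about $\arccos(w_{\max})\approx 1.31$. More importantly, inequality (b) is itself saturated, not slack: taking $y$ near $e_4$ in the cell of $\tau_1$ and $y'$ near $-e_4$ in the cell of $-\tau_2$ gives $d_{\Sp^3}(y,y')\approx\pi$ while $d_{\Sp^2}(\tau_1,-\tau_2)=\pi-\zeta_2$, so the difference approaches $\zeta_2$ exactly. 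Hence (b) requires a sharp argument of the same delicacy as (a), and any crude bound via fiber diameters plus $\dis(\sigma)$ will not close it. As it stands, the proposal neither matches a proof in the paper (there is none) nor supplies one: the upper bound $\dis(R_\sigma)\leq\zeta_2$ remains unestablished.
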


\subsubsection{EPCs for the case of $\Sp^m$ versus $\Sp^{n}$ \faClock}\label{sec:corr-sn-snp1}

\section{General results about TMC-EPCs}
We establish several general results about the fibers of  TMC-EPCs.  

\subsection{Some results about the fibers of $R_{2k+1}$}\label{sec:basics-fiber}
For each $t\in \Sp^1$, by $F_{2k+1}(t)$ we will denote the closed fiber 
\begin{align*}F_{2k+1}(t) &:= \overline{\{x\in \Sp^{2k+1}|\,(x,t) \in R_{2k+1}\}}\\ &= \big\{x\in\Sp^{2k+1}|d_{2k+1}(x,\gamma_{2k+1}(t))\leq d_{2k+1}(x,\gamma_{2k+1}(s))\,\forall s\in \Sp^1\big\}\\
&= \big\{x\in\Sp^{2k+1}|x \cdot\gamma_{2k+1}(t)\geq x \cdot\gamma_{2k+1}(s)\,\forall s\in \Sp^1\big\}.
\end{align*}

We will henceforth identify $\Sp^1$ with the real line modulo the equivalence relation $t\sim s$ iff $t-s = 2\pi m$ for some integer $m$.  It will be useful to introduce the following family of rotations of $\R^{2k+2}$. For $t\in \Sp^2$, let  $$T_t:=\begin{bmatrix}
M_1(t) & 0 & 0 & 0 &\cdots & 0\\
0 & M_3(t) & 0 & 0 &\cdots & 0\\
0 & 0 & M_5(t) & 0 &\cdots & 0\\
0 & 0 & 0 & M_7(t) &\cdots & 0\\
\vdots & \vdots & \vdots & \vdots &\ddots & \vdots\\
0 & 0 & 0 & 0 &\cdots & M_{2k+1}(t)\\
\end{bmatrix}$$
where, for each non-negative integer $\ell$ 
$$M_{2\ell+1}(t):=\begin{bmatrix}
\cos((2\ell+1)t) & \sin((2\ell+1)t)\\
-\sin((2\ell+1)t) & \cos((2\ell+1)t)
\end{bmatrix}.$$

These rotations have been utilized in the context of studying cyclic polytopes induced by the TMC; see \cite[Section 2]{smilansky1990bi} for the case $k=1$ and \cite[Section 3]{barvinok2008centrally} for the general case.

\begin{remark}\label{rem:rot}
Note that $T_0=\mathrm{id}$, $T_{\pm\pi} = -\mathrm{id}$ and that for all $t$ and $s$ one has 
$$\gamma_{2k+1}(t+s) =  T_t\big(\gamma_{2k+1}(s)\big).$$ It follows that, since the fibers $F_{2k+1}(t)$ are defined via a closest point map, we have that  all fibers are isometric and satisfy
$$F_{2k+1}(t) = T_t\big(F_{2k+1}(0)\big).$$
\end{remark}

Recall that a closed subset $A$ of $\Sp^n$ is said to be \emph{geodesically convex} if for any two points $p,p'\in A$ there is a unique geodesic (minimizing) geodesic connecting $p$ and $p'$ that is entirely contained within $A$. Similarly, the \emph{spherical convex hull} of $A$, denoted $\convsph(A)$, is the union of all geodesic segments with endpoints in $A$. We will reserve the notation $\conv(A)$ to denote the standard convex hull of $A$. 

Below, for $t\in \Sp^1$, by $\Sigma_t$ we will denote the $(2k+1)$-dimensional hyperplane passing through $\gamma_{2k+1}(t)$ and with normal  $\dot{\gamma}_{2k+1}(t)$: $$\Sigma_t:=\left\{p\in \R^{2k+2}|(p-\gamma_{2k+1}(t))\cdot \dot{\gamma}_{2k+1}(t)=0\right\}$$
and by $\Sp_t^{2k}$ we will denote the equator of $\Sp^{2k+1}$ obtained as its intersection with $\Sigma_t$:
$$\Sp^{2k}_t := \Sp^{2k+1}\cap \Sigma_t.$$

\begin{proposition}\label{prop:convex}
The fiber $F_{2k+1}(0)$ is a  geodesically convex subset of $\Sp^{2k+1}.$ Furthermore, $F_{2k+1}(0)\subset \Sp^{2k}_0.$
\end{proposition}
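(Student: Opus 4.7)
The plan is to reduce both claims to two observations: a first-order optimality condition at $s=0$, and the fact that each coordinate of $\gamma_{2k+1}$ is a harmonic of odd frequency. For the inclusion $F_{2k+1}(0)\subset \Sp^{2k}_0$, I would fix $x\in F_{2k+1}(0)$, so that the smooth $2\pi$-periodic function $s\mapsto x\cdot\gamma_{2k+1}(s)$ attains its global maximum at $s=0$. Differentiating at $s=0$ gives $x\cdot\dot\gamma_{2k+1}(0)=0$; since differentiating $\|\gamma_{2k+1}(s)\|^2\equiv 1$ at $s=0$ also yields $\gamma_{2k+1}(0)\cdot\dot\gamma_{2k+1}(0)=0$, we deduce $(x-\gamma_{2k+1}(0))\cdot\dot\gamma_{2k+1}(0)=0$, i.e., $x\in\Sigma_0\cap\Sp^{2k+1}=\Sp^{2k}_0$.

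For geodesic convexity, the plan is to rewrite
\[
F_{2k+1}(0)=\bigcap_{s\in\Sp^1}\bigl\{x\in\Sp^{2k+1}:x\cdot v_s\geq 0\bigr\},\qquad v_s:=\gamma_{2k+1}(0)-\gamma_{2k+1}(s),
\]
expressing the fiber as an intersection of closed hemispheres. Given any two non-antipodal points $p,q$ in a closed hemisphere $H=\{x\in\Sp^{2k+1}:x\cdot v\geq 0\}$, the unique minimizing geodesic $\alpha(\lambda)=\tfrac{\sin((1-\lambda)\theta)\,p+\sin(\lambda\theta)\,q}{\sin\theta}$ (with $\theta:=d_{2k+1}(p,q)<\pi$) satisfies $\alpha(\lambda)\cdot v\geq 0$ for every $\lambda\in[0,1]$, since $\sin((1-\lambda)\theta)$ and $\sin(\lambda\theta)$ are both nonnegative on $[0,\pi]$. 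Hence each $H$ is geodesically convex in the sense of the paper, and the intersection $F_{2k+1}(0)$ inherits this property \emph{provided} no two of its points are antipodal (which is the only obstruction to uniqueness of minimizing geodesics on $\Sp^{2k+1}$).

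The heart of the proof is therefore to rule out antipodal pairs in $F_{2k+1}(0)$, and this is where I expect the main (but still mild) obstacle. If $x$ and $-x$ both lie in $F_{2k+1}(0)$, the two defining inequalities combine to give $x\cdot\gamma_{2k+1}(s)=x\cdot\gamma_{2k+1}(0)$ for every $s\in\Sp^1$. Writing $x=(a_1,b_1,a_3,b_3,\ldots,a_{2k+1},b_{2k+1})$, the map $s\mapsto x\cdot\gamma_{2k+1}(s)$ is, up to the factor $1/\sqrt{k+1}$, a trigonometric polynomial supported on the odd frequencies $\{1,3,\ldots,2k+1\}$ with no constant term. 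Its constancy forces, by Fourier orthogonality, all coefficients $a_{2\ell+1},b_{2\ell+1}$ to vanish, hence $x=0$, contradicting $x\in\Sp^{2k+1}$. This closes the argument; note that the last step uses critically the odd-frequency structure built into the TMC, which is presumably why an analogous statement for a generic embedding would require a different argument.
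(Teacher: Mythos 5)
Your proof is correct, and it follows a genuinely different route from the paper's. For the inclusion $F_{2k+1}(0)\subset \Sp^{2k}_0$ you argue exactly as the paper does (first-order condition at the global maximum of $s\mapsto x\cdot\gamma_{2k+1}(s)$). For geodesic convexity, however, the paper invokes the fact that $\gamma_{2k+1}$ is a smooth real algebraic curve (\Cref{lemma:algebraic}), cites a result from metric algebraic geometry to get convexity of the ambient Voronoi cell $V_{2k+1}(0)\subset\R^{2k+2}$, observes that this cell is a convex cone through the origin, and then appeals to a result of Ferreira--Iusem--N\'emeth to pass from the convex cone to a geodesically convex subset of the sphere. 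You instead write $F_{2k+1}(0)$ directly as the intersection over $s\in\Sp^1$ of closed hemispheres $\{x:\,x\cdot(\gamma_{2k+1}(0)-\gamma_{2k+1}(s))\geq 0\}$, check via the explicit spherical-interpolation formula that each hemisphere contains the minimizing geodesic between any two of its non-antipodal points, and then exclude antipodal pairs in $F_{2k+1}(0)$ by noting that $x,-x\in F_{2k+1}(0)$ forces $s\mapsto x\cdot\gamma_{2k+1}(s)$ to be constant, which by Fourier orthogonality (odd frequencies, no constant term) forces $x=0$. This is elementary and self-contained: it bypasses the algebraicity lemma and both external references, and it makes explicit the antipodality issue that in the paper is absorbed into the citation of Ferreira et al. (the paper's definition of geodesic convexity demands a \emph{unique} minimizing geodesic, so this point genuinely needs to be addressed). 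What the paper's machinery buys in exchange is generality: the Voronoi-cell convexity argument applies to any smooth real algebraic site set, whereas your exclusion of antipodal pairs exploits the specific odd-frequency structure of the TMC, as you correctly point out.
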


\begin{remark}
Note that $F_{2k+1}(0) \cap \Sp^{2k-1} = F_{2k-1}(0)$.
\end{remark}

The proposition follows immediately from the lemma below and from results on Voronoi partitions induced by real algebraic manifolds \cite[Proposition 8.2]{sturmfelds-mag}. Barvinok and Novik (in Section 1  of \cite{barvinok2008centrally}) and Sinn already recognized that $\gamma_{2k+1}$ is a smooth algebraic curve. This follows  from the standard facts that
\begin{itemize}
\item for each integer $m$, $\cos(m t) = \mathcal{T}_m(\cos(t))$, 
\item for odd integers $m$, $\sin(m t) = -\mathcal{T}_m(\sin(t))$,
\end{itemize}
where $\mathcal{T}_m$ is the $m$th Chebyshev polynomial (of the first kind). For example, for the case $k=1$, (the trace of) $\gamma_3$ coincides with the zero set of the collection $\{P_1,P_2,P_3,P_4\}$ of polynomials given by\footnote{Note that this is not irreducible: $P_2$ can be dropped, for example}:
\begin{align*}
P_1(x,y,z,w) &:= x^2+y^2-\tfrac{1}{2}&\\
P_2(x,y,z,w) &:= z^2+w^2-\tfrac{1}{2}&\\
P_3(x,y,z,w) &:= z-\mathcal{T}_3(x) &= z- (4x^2-3)x\\
P_4(x,y,z,w) &:= w- \mathcal{T}_3(-y) &= w-(3-4y^2)y 
\end{align*}

\begin{lemma}\label{lemma:algebraic}
For each $k\geq 1$, the symmetric trigonometric moment curve $\gamma_{2k+1}:\Sp^1\to\Sp^{2k+1}$ can be modeled as a smooth real algebraic curve.
\end{lemma}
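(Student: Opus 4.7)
The plan is to make the two Chebyshev identities highlighted above fully explicit as algebraic equations, then check that the common zero set is precisely the trace of $\gamma_{2k+1}$ and that the resulting variety is smooth of dimension $1$. Write coordinates on $\R^{2k+2}$ as $(x_0, y_0, x_1, y_1, \ldots, x_k, y_k)$, so that $(x_\ell, y_\ell)$ is to correspond to $\tfrac{1}{\sqrt{k+1}}\bigl(\cos((2\ell+1)t), \sin((2\ell+1)t)\bigr)$. Since $\mathcal{T}_{2\ell+1}$ is an odd polynomial, only odd powers of $\sqrt{k+1}$ appear in $\mathcal{T}_{2\ell+1}(\sqrt{k+1}\,u)$, so the rescaled polynomial
$$\widetilde{\mathcal{T}}_{2\ell+1}(u) := \tfrac{1}{\sqrt{k+1}}\,\mathcal{T}_{2\ell+1}\bigl(\sqrt{k+1}\,u\bigr)$$
has real coefficients. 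I would then introduce the defining polynomials
$$P_0 := (k+1)(x_0^2 + y_0^2) - 1,\quad P^{(x)}_\ell := x_\ell - \widetilde{\mathcal{T}}_{2\ell+1}(x_0),\quad P^{(y)}_\ell := y_\ell - (-1)^\ell\,\widetilde{\mathcal{T}}_{2\ell+1}(y_0)$$
for $\ell = 1, \ldots, k$, exploiting both $\cos((2\ell+1)t) = \mathcal{T}_{2\ell+1}(\cos t)$ and $\sin((2\ell+1)t) = (-1)^\ell\,\mathcal{T}_{2\ell+1}(\sin t)$.

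Next I would verify that the common zero set $V$ of $\{P_0\}\cup\{P^{(x)}_\ell, P^{(y)}_\ell : 1 \le \ell \le k\}$ equals $\gamma_{2k+1}(\Sp^1)$. Containment of $\gamma_{2k+1}(\Sp^1)$ in $V$ is immediate from the two Chebyshev identities. For the reverse inclusion, given $p = (x_0, y_0, \ldots, x_k, y_k) \in V$, the equation $P_0(p) = 0$ gives $(\sqrt{k+1}\,x_0, \sqrt{k+1}\,y_0) \in \Sp^1$, so there is a unique $t \in \Sp^1$ with $\cos t = \sqrt{k+1}\,x_0$ and $\sin t = \sqrt{k+1}\,y_0$; the remaining equations then force $(x_\ell, y_\ell) = \tfrac{1}{\sqrt{k+1}}\bigl(\cos((2\ell+1)t), \sin((2\ell+1)t)\bigr)$ for every $\ell\geq 1$, so $p = \gamma_{2k+1}(t)$.

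For smoothness, I would examine the Jacobian of $(P_0, P^{(x)}_1, P^{(y)}_1, \ldots, P^{(x)}_k, P^{(y)}_k)$, which has a clean block structure: the block of partial derivatives in $(x_\ell, y_\ell)$ for $\ell \geq 1$ is the $2k \times 2k$ identity (each $P^{(\cdot)}_\ell$ is affine in the corresponding variable), while the row for $P_0$ contributes $(2(k+1)x_0, 2(k+1)y_0)$ in the $(x_0, y_0)$ columns. The latter never vanishes on $V$, because $P_0(p) = 0$ forces $x_0^2 + y_0^2 = 1/(k+1) > 0$, so the Jacobian has full rank $2k+1$ everywhere on $V$, and $V$ is thus a smooth real algebraic subvariety of $\R^{2k+2}$ of dimension $1$. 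The only real subtlety is the bookkeeping with the $\sqrt{k+1}$ factors: it is precisely the oddness of $\mathcal{T}_{2\ell+1}$ that cancels the remaining square roots so that $\widetilde{\mathcal{T}}_{2\ell+1}$ has real coefficients, and this is what makes the construction go through.
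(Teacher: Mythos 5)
Your proof is correct and follows the same route the paper sketches: the trace of $\gamma_{2k+1}$ is cut out by Chebyshev-polynomial relations between the coordinate pairs together with one circle equation, the zero set is checked to be exactly the trace, and smoothness follows from the evident block structure of the Jacobian, whose rank is $2k+1$ along the curve. Your write-up is in fact more careful than the paper's sketch on two points: the rescaled polynomial $\widetilde{\mathcal{T}}_{2\ell+1}(u)=\tfrac{1}{\sqrt{k+1}}\,\mathcal{T}_{2\ell+1}(\sqrt{k+1}\,u)$ correctly absorbs the normalization $\tfrac{1}{\sqrt{k+1}}$ (which the paper's $k=1$ example elides), and the sign in the sine identity is $(-1)^\ell$, i.e.\ $\sin((2\ell+1)t)=(-1)^\ell\,\mathcal{T}_{2\ell+1}(\sin t)$, rather than the uniform minus sign stated in the paper, which is only valid when $2\ell+1\equiv 3 \pmod 4$.
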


\begin{proof}[Proof of \Cref{prop:convex}]
The  fiber $F_{2k+1}(0)$ is the intersection $V_{2k+1}(0)\cap \Sp^{2k+1}$ of the closed Voronoi cell $V_{2k+1}(0)\subset \R^{2k+2}$ corresponding to $\gamma_{2k+1}(0)$ in the Voronoi tiling of $\R^{2k+2}$ induced by the curve $\gamma_{2k+1}.$
By the lemma above,  $\gamma_{2k+1}$ is a smooth real algebraic curve so that, by \cite[Proposition 8.2]{sturmfelds-mag}, $V_{2k+1}(0)$ is convex.  It is clear that $V_{2k+1}(0)$ is a convex cone containing the origin so that  the claim follows from \cite[Proposition 2 and Remark 1]{ferreira2013projections}. 

The second claim that $F_{2k+1}(0)$ is contained in $\Sp^{2k}_0$ can be explained as follows. One first recalls that 
$$F_{2k+1}(0)=\{x\in \Sp^{2k+1}|\,x\cdot \gamma_{2k+1}(0)\geq x\cdot \gamma_{2k+1}(t)\,\,\forall t\in \Sp^1\}.$$ So that $x\in F_{2k+1}(0)$ then means that the function $t\mapsto f(t;x):=x\cdot \gamma_{2k+1}(t)$ has a global maximum at $t=0$ which implies that $x\cdot \dot{\gamma}_{2k+1}(0) = 0$ whence the claim.
\end{proof}

We now state the following  relationship between the modulus of discontinuity of $\psi_{2k+1}$ and the fibers $F_{2k+1}(\cdot)$ for later use. Via  \Cref{prop:mod-disc} and from the definition of   $F_{2k+1}(0)$, we immediately obtain the following.
\begin{corollary}\label{coro:disc-voro}
    $\disc(\psi_{2k+1})=\max\{t\in[0,\pi]|\,\partial F_{2k+1}(0)\cap \partial F_{2k+1}(t) \neq \emptyset\}.$
\end{corollary}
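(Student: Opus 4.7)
The plan is to apply \Cref{prop:mod-disc} directly to $\iota=\gamma_{2k+1}$ and then collapse the resulting supremum to one over $s\in[0,\pi]$, using the rotational symmetry from \Cref{rem:rot} together with the reflection symmetry $s\mapsto -s$ that is inherent in the coordinate formula for $\gamma_{2k+1}$.

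First I would apply \Cref{prop:mod-disc} with $X=\Sp^1$, $Y=\Sp^{2k+1}$ and $\iota=\gamma_{2k+1}$. Interpreting the Voronoi cells as their closures (so that the cell at $t$ is $\overline{V_t}=F_{2k+1}(t)$ by the definition in \Cref{sec:basics-fiber}), this yields
\[
\disc(\psi_{2k+1})=\sup\{d_{\Sp^1}(t,t'):F_{2k+1}(t)\cap F_{2k+1}(t')\neq\emptyset,\ t,t'\in\Sp^1\}.
\]
For $t\neq t'$, any $y\in F_{2k+1}(t)\cap F_{2k+1}(t')$ satisfies $y\cdot\gamma_{2k+1}(t)=y\cdot\gamma_{2k+1}(t')\geq y\cdot\gamma_{2k+1}(s)$ for every $s\in\Sp^1$, so $y$ lies on $\partial F_{2k+1}(t)\cap\partial F_{2k+1}(t')$. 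Hence the condition in the supremum can be replaced by $\partial F_{2k+1}(t)\cap\partial F_{2k+1}(t')\neq\emptyset$.

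Next I would use the rotations $T_t$ from \Cref{rem:rot}. Since each $T_u$ is an isometry of $\Sp^{2k+1}$ with $T_u F_{2k+1}(t)=F_{2k+1}(t+u)$, non-emptiness of $\partial F_{2k+1}(t)\cap\partial F_{2k+1}(t')$ depends only on $s:=t'-t$ and is equivalent to $\partial F_{2k+1}(0)\cap\partial F_{2k+1}(s)\neq\emptyset$, while $d_{\Sp^1}(t,t')=d_{\Sp^1}(0,s)$. To restrict $s\in\Sp^1$ to $[0,\pi]$, I would invoke the linear isometry $J:=\mathrm{diag}(1,-1,1,-1,\ldots,1,-1)$ of $\R^{2k+2}$, which satisfies $J\gamma_{2k+1}(s)=\gamma_{2k+1}(-s)$ and fixes $\gamma_{2k+1}(0)$; hence $JF_{2k+1}(0)=F_{2k+1}(0)$ and $JF_{2k+1}(s)=F_{2k+1}(-s)$, yielding $\partial F_{2k+1}(0)\cap\partial F_{2k+1}(s)\neq\emptyset\iff\partial F_{2k+1}(0)\cap\partial F_{2k+1}(-s)\neq\emptyset$. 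Together with $d_{\Sp^1}(0,s)=s$ for $s\in[0,\pi]$, this gives the corollary's formula with $\sup$ in place of $\max$.

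Finally I would upgrade $\sup$ to $\max$ by noting that the set $A:=\{s\in[0,\pi]:\partial F_{2k+1}(0)\cap\partial F_{2k+1}(s)\neq\emptyset\}$ is closed. Given $s_n\to s$ in $A$ with witnesses $y_n\in\partial F_{2k+1}(0)\cap\partial F_{2k+1}(s_n)$, compactness of $\Sp^{2k+1}$ provides a subsequential limit $y$, and passing to the limit in the defining equalities $y_n\cdot\gamma_{2k+1}(s_n)=y_n\cdot\gamma_{2k+1}(0)$ and in the inequalities $y_n\cdot\gamma_{2k+1}(s_n)\geq y_n\cdot\gamma_{2k+1}(u)$ for every $u\in\Sp^1$ yields $y\in\partial F_{2k+1}(0)\cap\partial F_{2k+1}(s)$. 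The main (mild) obstacle is really just the bookkeeping around the reflection symmetry $J$ used to fold the parameter into $[0,\pi]$; everything else falls out of the definitions and \Cref{rem:rot}.
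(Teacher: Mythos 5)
Your proposal is correct and follows the same route as the paper, which states this corollary as an immediate consequence of \Cref{prop:mod-disc} (read with the closed Voronoi cells $\overline{V_t}=F_{2k+1}(t)$) together with the definition of the fibers and the symmetry of \Cref{rem:rot}. The extra details you supply --- that a point in $F_{2k+1}(t)\cap F_{2k+1}(t')$ with $t\neq t'$ is necessarily a common boundary point, the folding of the parameter into $[0,\pi]$ (which can also be done with $T_{-s}$ in place of your reflection $J$), and the compactness argument upgrading $\sup$ to $\max$ --- are precisely the bookkeeping the paper leaves implicit.
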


\subsection{A simplification of the calculation of the distortion of $R_{2k+1}$}\label{sec:simple-conditions}
We exploit  symmetries of $R_{2k+1}$ in order to simplify the determination of its distortion.
\begin{remark}\label{rem:dis} Note that, in order to estimate/calculate  the distortion  of $R_{2k+1}$, it suffices to consider, for all $q,q'\in F_{2k+1}(0)$ and $t\in\Sp^1$, the quantity
$$\delta_t(q,q'):=\big|d_{2k+1}(q,T_t(q'))-d_1(0,t)\big|.$$
In other words, 
one has $$\dis(R_{2k+1}) = \max_{\substack{q,q'\in F_{2k+1}(0)\\ t\in \Sp^1}} \delta_t(q,q').$$
The claim follows from \Cref{rem:rot} and the following calculation
\begin{align*}\dis(R_{2k+1}) &= \max_{s,t\in\Sp^1}\max_{\substack{q\in F_{2k+1}(t)\\q'\in F_{2k+1}(s)}}\big|d_{2k+1}(q,q') - d_1(s,t)\big|\\& = \max_{\substack{q,q'\in F_{2k+1}(0)}}\max_{s,t\in\Sp^1}\big|d_{2k+1}(T_tq,T_sq') - d_1(s,t)\big|\\
&=\max_{\substack{q,q'\in F_{2k+1}(0)}}\max_{s,t\in\Sp^1}\big|d_{2k+1}(q,T_{s-t}q') - d_1(0,s-t)\big|.
\end{align*}
\end{remark}

For $\delta\in[0,\pi$], consider the following four properties

\begin{align*}
    A'(\delta): & & d_{2k+1}(q,T_t q') \leq d_1(0,t) + \delta && \forall \, q,q'\in F_{2k+1}(0),\, t \in \Sp^1\\
    A(\delta): & & d_{2k+1}(q,T_t q') \leq d_1(0,t) + \delta && \forall \, q,q'\in F_{2k+1}(0),\, |t| \leq \pi-\delta\\
    &&&&\\
    B'(\delta): & &   d_1(0,t)  \leq d_{2k+1}(q,T_t q') + \delta && \forall \, q,q'\in F_{2k+1}(0),\, t \in \Sp^1\\
    B(\delta): & &   d_1(0,t)  \leq d_{2k+1}(q,T_t q') + \delta && \forall \, q,q'\in F_{2k+1}(0),\, |t| \in [\delta, \pi]
\end{align*}
The following proposition simplifies the task of checking whether $\dis(R_{2k+1})\leq \delta$ to checking whether $B(\delta)$ holds.

\begin{proposition}\label{prop:eq-dis}
   For each $\delta \in [0,\pi]$ we have:
   \begin{itemize}
\item[(a)] $\dis(R_{2k+1})\leq \delta$ $\Longleftrightarrow$ $A'(\delta)$ and $B'(\delta)$ hold.
\item[(b)] $A(\delta) \Longleftrightarrow A'(\delta)$.
\item[(c)] $B(\delta) \Longleftrightarrow B'(\delta)$.
\item[(d)] $B(\delta)\Longrightarrow A(\delta).$
   \end{itemize}
\end{proposition}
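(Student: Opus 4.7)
The plan is to dispatch (a), (b), (c) by short direct arguments and to reduce (d) to an antipodal-symmetry argument on $\Sp^{2k+1}$ and $\Sp^1$.

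For (a), I would invoke the formula for $\dis(R_{2k+1})$ from \Cref{rem:dis}: since
\[
\dis(R_{2k+1}) = \max_{q,q'\in F_{2k+1}(0),\, t\in\Sp^1}\bigl|d_{2k+1}(q,T_t q') - d_1(0,t)\bigr|,
\]
the condition $\dis(R_{2k+1})\leq\delta$ is, by definition of absolute value, precisely the conjunction of $A'(\delta)$ and $B'(\delta)$.

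For (b) and (c), one direction is immediate by restricting the range of $t$. For the converse in (b), I would use that $d_{2k+1}\leq\pi$: when $|t|>\pi-\delta$ the right-hand side of $A'(\delta)$ satisfies $d_1(0,t)+\delta>\pi \geq d_{2k+1}(q,T_t q')$, so the inequality is automatic. Likewise for (c), when $|t|<\delta$ one has $d_1(0,t)=|t|<\delta\leq d_{2k+1}(q,T_t q')+\delta$, so $B'(\delta)$ is automatic outside the range covered by $B(\delta)$.

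The main content is (d), and here I would exploit the antipodal symmetry. Since each odd rotation block satisfies $M_{2\ell+1}(\pi) = -I$, we have $T_{\pi} = -\mathrm{id}$ and hence $T_{t+\pi} = -T_t$ for every $t\in\Sp^1$. Two consequences of this that I want to use are: on $\Sp^{2k+1}$, for any $p$,
\[
d_{2k+1}(q, -p) = \pi - d_{2k+1}(q, p),
\]
and on $\Sp^1$, for $t\in[0,\pi]$ one has $d_1(0, t-\pi) = \pi - d_1(0, t)$ (and symmetrically for $t\in[-\pi,0]$ using $s=t+\pi$). Assume $B(\delta)$ holds and fix $q,q'\in F_{2k+1}(0)$ and $t$ with $|t|\leq\pi-\delta$. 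Choose $s = t-\pi$ if $t\geq 0$ and $s = t+\pi$ if $t<0$; either way $|s|\in[\delta,\pi]$. Applying $B(\delta)$ at $s$ and using the two identities above gives
\[
\pi - d_1(0,t) \;=\; d_1(0,s) \;\leq\; d_{2k+1}(q, T_s q') + \delta \;=\; \pi - d_{2k+1}(q, T_t q') + \delta,
\]
which rearranges to $d_{2k+1}(q, T_t q')\leq d_1(0,t)+\delta$, i.e.\ $A(\delta)$.

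The only delicate point is bookkeeping in (d): one must check that the shift $s=t\pm\pi$ lands in the interval $[\delta,\pi]$ where $B(\delta)$ is assumed to hold. This is exactly the reason for the complementary cutoffs $\pi-\delta$ in $A(\delta)$ and $\delta$ in $B(\delta)$, so the two pieces fit together without gap; verifying this compatibility is the main (and essentially only) thing to double-check.
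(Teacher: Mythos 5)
Your proposal is correct and follows essentially the same route as the paper: (a) via the symmetry reduction of \Cref{rem:dis}, (b) and (c) by noting the omitted ranges of $t$ are vacuous, and (d) via the antipodal identity $T_{t\pm\pi}q'=-T_tq'$ together with $d_{2k+1}(q,-p)=\pi-d_{2k+1}(q,p)$ and $d_1(0,t\mp\pi)=\pi-d_1(0,t)$. The only cosmetic difference is that you apply $B(\delta)$ directly at the shifted parameter $s=t\pm\pi$ (checking $|s|\in[\delta,\pi]$), whereas the paper first passes to $B'(\delta)$ via (c); the computation is otherwise identical.
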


\begin{proof}
    (a) follows from \Cref{rem:dis}.  (b), (c) are clear. Let's prove (d). Assume $B(\delta)$ holds so that, by (c), $B'(\delta)$ also holds. Pick $t$ such that $|t|\leq \pi-\delta$ and fix $q,q'\in F_{2k+1}(0).$ WLOG we can assume that $t\in[0,\pi-\delta]$. Notice that, since $T_{t-\pi}(q') = T_{-\pi}T_t(q')=-T_t(q')$, we have
    $$d_{2k+1}(q,T_tq')+d_{2k+1}(q,T_{t-\pi}q')=\pi.$$

    Since $B'(\delta)$ holds, we have that $d_{2k+1}(q,T_{t-\pi}q') \geq d_1(0,t-\pi)-\delta$ so that 
     \begin{align*}
        d_{2k+1}(q,T_tq') &= \pi-d_{2k+1}(q,T_{t-\pi}q')\\
        &\leq \pi+\delta -d_1(0,t-\pi)\\
        &= \pi+\delta - (\pi-t) \\
        &=t+\delta\\
        &=d_1(0,t) + \pi
    \end{align*}
    which implies that $A(\delta)$ holds.
\end{proof}

One additional simplification consists of checking $B(\delta)$ only for points $a,a'\in \partial F_{2k+1}(0).$ Define the condition
\begin{align}\label{eq:Baster}
B^*(\delta): & &   d_1(0,t)  \leq d_{2k+1}(q,T_t q') + \delta && \forall \, q,q'\in \partial F_{2k+1}(0),\, |t| \in [\delta, \pi] \tag{$\star$}
\end{align}
\begin{proposition}\label{prop:B}
   $B(\delta)\Longleftrightarrow B^*(\delta).$
\end{proposition}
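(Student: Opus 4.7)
The direction $B(\delta)\Rightarrow B^*(\delta)$ is immediate from $\partial F_{2k+1}(0)\subseteq F_{2k+1}(0)$, so I focus on the converse. Fix $q,q'\in F_{2k+1}(0)$ and $|t|\in[\delta,\pi]$; the goal is to derive $d_{2k+1}(q,T_tq')\geq|t|-\delta$ from $B^*(\delta)$. My plan is to produce a boundary pair $(q^*,q'^*)\in\partial F_{2k+1}(0)\times\partial F_{2k+1}(0)$ with $d_{2k+1}(q^*,T_tq'^*)\leq d_{2k+1}(q,T_tq')$; applying $B^*(\delta)$ to $(q^*,q'^*)$ then yields $|t|\leq d_{2k+1}(q^*,T_tq'^*)+\delta\leq d_{2k+1}(q,T_tq')+\delta$. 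Equivalently, it suffices to prove the purely geometric claim that for every $t\neq 0$ the minimum of $(\tilde q,\tilde q')\mapsto d_{2k+1}(\tilde q,T_t\tilde q')$ on $F_{2k+1}(0)^2$ is attained on $\partial F_{2k+1}(0)^2$.

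By \Cref{prop:convex}, $F_{2k+1}(0)$ is geodesically convex and contained in the equator $\Sp^{2k}_0\subseteq\Sp^{2k+1}$. For fixed $\tilde q'$, let $p$ denote the orthogonal projection of $T_t\tilde q'$ onto $\Sp^{2k}_0$; a direct decomposition shows that $d_{2k+1}(\tilde q,T_t\tilde q')\big|_{\Sp^{2k}_0}$ is a strictly increasing function of $d_{\Sp^{2k}_0}(\tilde q,p)$. Therefore, if $p\notin F_{2k+1}(0)$, geodesic convexity in $\Sp^{2k}_0$ forces the $\tilde q$-minimizer onto $\partial F_{2k+1}(0)$; the symmetric argument in $\tilde q'$ (using $T_tF_{2k+1}(0)=F_{2k+1}(t)\subseteq\Sp^{2k}_t$) completes the push-to-boundary step. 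The only obstacle is the joint interior regime, where both projections already lie in their cells; there the first-order optimality conditions collapse to the mutual-projection identities $q^*=\mathrm{proj}_{\Sp^{2k}_0}(T_tq'^*)$ and $T_tq'^*=\mathrm{proj}_{\Sp^{2k}_t}(q^*)$, pinning the critical pair to the great circle $\Sp^{2k+1}\cap\mathrm{span}\bigl(\dot\gamma_{2k+1}(0),\dot\gamma_{2k+1}(t)\bigr)$, on which the two critical values of $d_{2k+1}$ are $\arccos(\pm c(t))$ with $c(t)=\tfrac{1}{S}\sum_{j=0}^k(2j+1)^2\cos((2j+1)t)$ and $S=\sum_{j=0}^k(2j+1)^2$.

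The main obstacle---and where the real work lies---is to rule out this interior regime by showing that the corresponding critical pair on the great circle cannot belong to $F_{2k+1}(0)^2$ for any $t\neq 0$. Numerical spot checks (e.g.\ for $k=1$, $t=3$) suggest that although one factor of the ``small'' critical pair (the one with value $\arccos(|c(t)|)$) may sit in $F_{2k+1}(0)$, the other factor systematically fails the defining inequality $q'^*\cdot\gamma_{2k+1}(s)\leq q'^*\cdot\gamma_{2k+1}(0)$ at an appropriate $s\in\Sp^1$, so the pair is never jointly feasible. I would attempt a rigorous proof by expanding $q'^*\cdot\gamma_{2k+1}(s)$ in closed form using the explicit parametrization of this great circle together with the algebraic structure of $\gamma_{2k+1}$ supplied by \Cref{lemma:algebraic}, reducing the non-feasibility to an explicit trigonometric identity in $t$ and $s$. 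Once the interior regime is ruled out, every joint minimizer of $d_{2k+1}(\tilde q,T_t\tilde q')$ over $F_{2k+1}(0)^2$ lies on $\partial F_{2k+1}(0)^2$ and the equivalence follows.
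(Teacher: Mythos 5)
Your overall reduction coincides with the paper's: the direction $B(\delta)\Rightarrow B^*(\delta)$ is trivial, and the converse amounts to showing that the minimum of $d_{2k+1}(\tilde q,T_t\tilde q')$ over $F_{2k+1}(0)\times F_{2k+1}(0)$ — equivalently, the minimal distance between the fibers $F_{2k+1}(0)$ and $F_{2k+1}(t)$ — is attained at a pair of (relative) boundary points, so that $B^*(\delta)$ applied to that pair yields $B(\delta)$. The paper finishes exactly here, in one short step: since the fibers are geodesically convex (\Cref{prop:convex}) and, for $t\neq 0$, distinct fibers can intersect only along their boundaries, the minimum over $F_{2k+1}(0)\times F_{2k+1}(t)$ is attained at boundary points.

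The gap in your proposal is that you never establish this attainment statement. You correctly identify the only problematic scenario — a minimizing pair with both points in the relative interiors of their cells, which the first-order conditions force to be mutual projections lying on the great circle through $\dot\gamma_{2k+1}(0)$ and $\dot\gamma_{2k+1}(t)$ — but your exclusion of this ``interior regime'' rests on numerical spot checks for $k=1$ and a plan (``I would attempt\ldots'') for a trigonometric non-feasibility argument that is never carried out. This is not a dispensable detail: the minimum over boundary pairs is automatically $\geq$ the minimum over the full cells, so $B^*(\delta)$ gives no control over interior pairs unless the interior regime is genuinely ruled out, and for two arbitrary convex codimension-one sets it cannot be (e.g.\ two parallel flat disks of different radii have their minimal distance realized only at pairs involving a relative interior point); one must therefore use specific structure of the Voronoi fibers, such as their geodesic convexity together with the fact that distinct fibers of the tiling meet only along their boundaries, which is what the paper invokes. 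As written, the step that you yourself describe as ``where the real work lies'' is missing, so the proposal is an outline rather than a proof of $B^*(\delta)\Rightarrow B(\delta)$.
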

\begin{proof}
We only need to prove that $B^*(\delta)\Longrightarrow B(\delta).$ Notice that $B(\delta)$ is equivalent to 

\begin{align*}
B"(\delta): & &   d_1(0,t)  \leq \min\{d_{2k+1}(q,T_t q');\,q,q'\in F_{2k+1}(0)\} + \delta && \forall  |t| \in [\delta, \pi]
\end{align*}
 and that 
 $$\min\{d_{2k+1}(q,T_t q');\,q,q'\in F_{2k+1}(0)\} = \min\{d_{2k+1}(q,q');\,q\in F_{2k+1}(0)\,\text{and}\,q'\in F_{2k+1}(t)\}.$$
  These imply the claim since $F_{2k+1}(0)$ (and therefore each fiber) is geodesically convex\footnote{For $t\neq 0$, the  fiber $F_{2k+1}(t)$ can intersect $F_{2k+1}(0)$ only at boundary points.} so that the minimum on the RHS is attained at boundary points.
\end{proof}
\begin{framed} 
As a consequence of \Cref{prop:eq-dis,prop:B,prop:dis2k}, we see that, in order to establish \Cref{conj:tmc}, it suffices to establish $B^*(\delta)$ for $\delta = \delta_k=\frac{2\pi k}{2k+1}.$
 \end{framed}


\subsection{Other results}
In this section we introduce a convenient parameterization of odd-dimensional spheres and also explore some preliminary results.
\subsubsection{Hopf coordinates on $\Sp^{2k+1}$}\label{sec:hopf}
Hopf coordinates provide a  well known coodinatization of $\Sp^3\subset \R^4$ 
so that a point on  $\Sp^3$ can be (uniquely) written  as  
$$q(\theta_1,\theta_1,\zeta) :=\big(\cos(\theta_1)\cos(\zeta),\sin(\theta_1)\cos(\zeta),\cos(\theta_2)\sin(\zeta),\sin(\theta_2)\sin(\zeta)\big),$$
for $\theta_1,\theta_2\in[0,2\pi)$ and $\zeta\in[0,\frac{\pi}{2}].$ These coordinates are closely linked to the (topological) fact that $\Sp^3$ is homeomorphic to the topological join $\Sp^1* \Sp^1$ of two copies of the circle; see \Cref{fig:hopf-s3}.

\begin{figure}
\centering
\includegraphics[width=\linewidth]{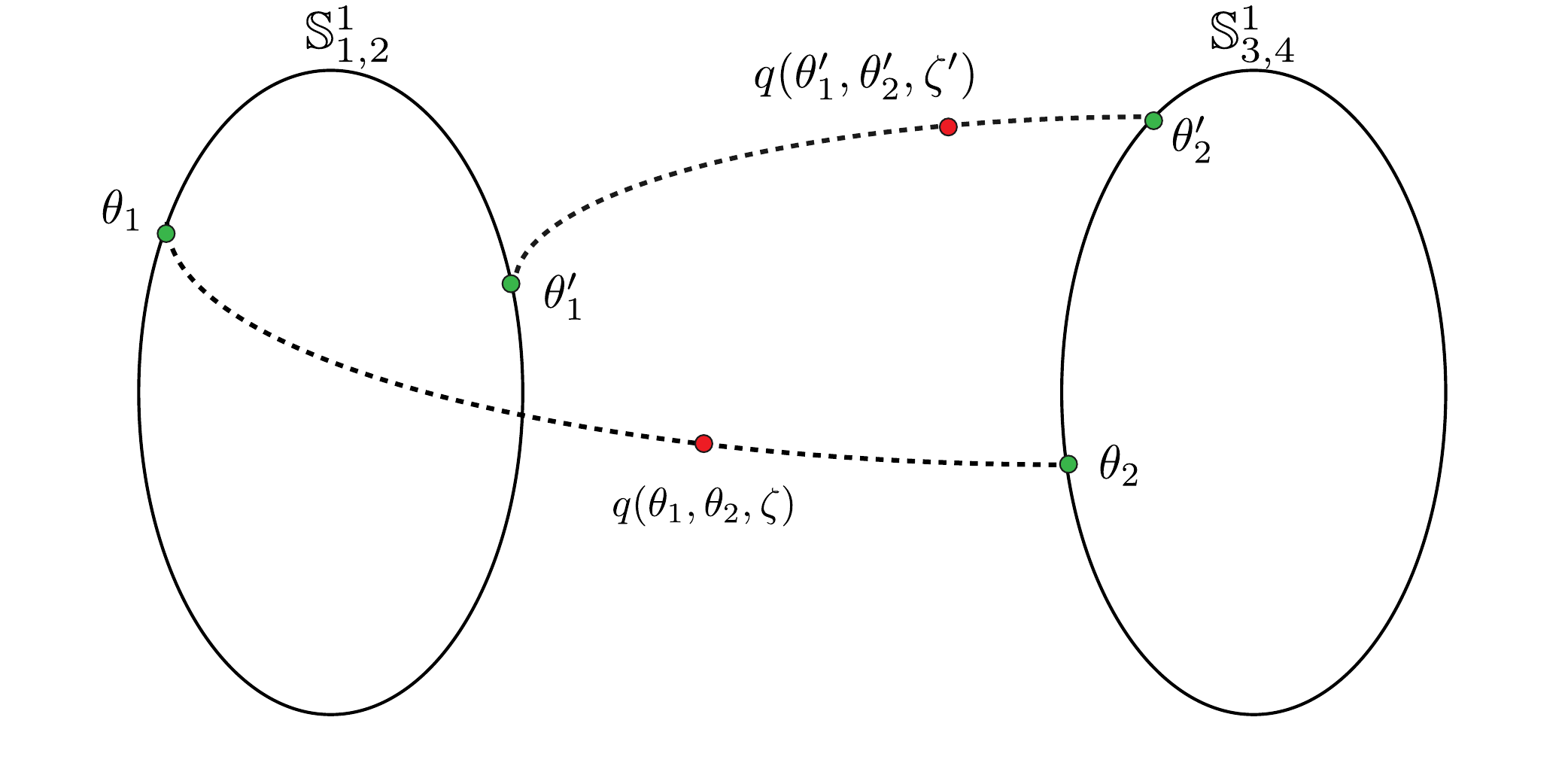}
\caption{Hopf coordinates on $\Sp^3$. See \Cref{sec:hopf}.} \label{fig:hopf-s3}
\end{figure}

In general, since $\Sp^{2k+1}$ is homeomorphic to the $(k+1)$-fold join $\Sp^1 *\cdots * \Sp^1$ ($k+1$ times), one would suspect that it is possible to generalize these coordinates to higher dimensional odd dimensional spheres. This is indeed the case; one can induce such coordinates as follows. 

Firstly, consider $k$ copies $\Sp^1$ labeled as $\Sp_{1,2}^1,\Sp^1_{3,4},\ldots,\Sp^1_{2i+1,2i+2},\ldots,\Sp^1_{2k+1,2k}$ such that  the copy $\Sp^1_{2i+1,2i+2}$ lies in $\mathrm{span}(e_{2i+1},e_{2i+2})$ where $\{e_1,e_2,\ldots,e_{2k+2}\}$ is canonical basis of $\R^{2k+2}$. Then, introduce $(\theta_1,\ldots,\theta_{k+1}) \in[0,2\pi)^{k+1}$ such that:

\begin{itemize}
\item $p_{1}\in \Sp^1_{1,2}$ has coordinates $(\cos(\theta_1),\sin(\theta_1),0,0,\ldots,0)$ for $\theta_1\in[0,2\pi),$
\item $p_{2}\in \Sp^1_{3,4}$ has coordinates $(0,0,\cos(\theta_2),\sin(\theta_2),0,0,\ldots,0)$ for $\theta_2\in[0,2\pi),$
\item $p_{i}\in \Sp^1_{2i-1,2i}$ has coordinates $(0,0,0,0,\ldots,0,0,\cos(\theta_i),\sin(\theta_i),0,0,\ldots,0)$ for $\theta_i\in[0,2\pi),$
\item $p_{k+1}\in \Sp^1_{2k+1,2k+2}$ has coordinates $(0,0,\ldots,0,0\cos(\theta_k),\sin(\theta_k))$ for $\theta_{k+1}\in[0,2\pi).$
\end{itemize}

Secondly, consider $(a_1,a_2,\ldots,a_i,\ldots,a_{k+1}) \in [0,1]^{k+1}$ such that $\sum_i a_i^2 =1$.

\medskip
Then, the Hopf coordinates of a point in $\Sp^{2k+1}$ will be denoted by $$q(p_1,\ldots,p_{k+1};a_1,\ldots,a_{k+1})$$ or alternatively as 
$$q(\theta_1,\ldots,\theta_{k+1};a_1,\ldots,a_{k+1}).$$

\begin{remark}
The subsets $C(a_1,\ldots,a_{k+1})$ of $\Sp^{2k+1}$ obtained via the above parametrization for fixed $a_1,\ldots,a_{k+1}$ are analogues of the Clifford tori in $\Sp^3$.
\end{remark}

Directly from the definition of the  rotation $T_t$ in \Cref{sec:basics-fiber} and by the description of Hopf coordinates we obtain the following (using the notation established above).
\begin{corollary}\label{coro:preserv-tori}
For all $t\in \Sp^1$, 
\begin{align*}T_t\big(q(\theta_1,\ldots,\theta_i,\ldots,\theta_{k+1};a_1,\ldots,a_i\ldots,a_{k+1}\big) &=\\ q(\theta_1+t,\ldots,\theta_i+(2i-1)t,\ldots,\theta_k+(2k+1)t;a_1,\ldots,a_i,\ldots,a_k).\end{align*}
In particular, $T_t$ does not affect the values of the coordinates $a_1,\ldots,a_{k+1}$.
\end{corollary}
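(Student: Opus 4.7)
The plan is to verify the identity by direct block-by-block computation, exploiting the fact that $T_t$ is block diagonal in exactly the same $(2i-1,2i)$-coordinate splitting that underlies the Hopf parametrization.

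First I would unpack the Hopf coordinate expression. The point $q(\theta_1,\ldots,\theta_{k+1};a_1,\ldots,a_{k+1}) \in \Sp^{2k+1}$ is, by the definition given in \Cref{sec:hopf}, obtained by inserting the $i$-th scaled circular pair $(a_i\cos\theta_i,\,a_i\sin\theta_i)$ into the coordinate slots $(2i-1, 2i)$ for $i=1,\ldots,k+1$. The constraint $\sum_i a_i^2 = 1$ is exactly what guarantees this lies on the unit sphere.

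Next, I would read off the action of $T_t$ on the $i$-th block. By the definition of $T_t$ in \Cref{sec:basics-fiber}, the matrix $M_{2i-1}(t)$ acts on slots $(2i-1,2i)$. Applying it to the vector $(a_i\cos\theta_i,\,a_i\sin\theta_i)^\top$ and using the angle-sum identities for $\cos$ and $\sin$ yields another pair of the form $(a_i\cos\theta_i',\,a_i\sin\theta_i')$ with $\theta_i' = \theta_i + (2i-1)t$ (up to the sign convention embedded in the definition of $M_{2\ell+1}(t)$, which is the same convention used in \Cref{rem:rot} so that $\gamma_{2k+1}(s+t) = T_t(\gamma_{2k+1}(s))$; the case $\theta_i = (2i-1)s$, $a_i = 1/\sqrt{k+1}$ is precisely that consistency check). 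Crucially, the radial factor $a_i$ is left unchanged because $M_{2i-1}(t) \in \mathrm{SO}(2)$ preserves the norm of the pair.

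Performing this computation simultaneously in each of the $k+1$ blocks reassembles the image as $q(\theta_1+t,\ldots,\theta_{k+1}+(2k+1)t;a_1,\ldots,a_{k+1})$, which is the claimed formula. The statement that the $a_i$ are unaffected is immediate from the orthogonality of each block: for each $i$, the squared norm $a_i^2 = (a_i\cos\theta_i)^2 + (a_i\sin\theta_i)^2$ is preserved, and since $a_i \geq 0$ the value of $a_i$ itself is preserved.

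There is no real obstacle here; the proof is a routine application of the trigonometric angle-addition formulas, and the only thing to be careful about is matching the sign convention of $M_{2\ell+1}(t)$ with the one implicit in \Cref{rem:rot} so that the sign on $(2i-1)t$ in the conclusion comes out correctly. A short remark to that effect, plus the explicit $2\times 2$ multiplication for a single generic block, should suffice.
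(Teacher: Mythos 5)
Your proposal is correct and matches the paper's (implicit) argument: the paper states this corollary as following directly from the block-diagonal form of $T_t$ and the Hopf parametrization, and your block-by-block computation with the angle-addition formulas, together with the observation that each $\mathrm{SO}(2)$ block preserves $a_i^2=(a_i\cos\theta_i)^2+(a_i\sin\theta_i)^2$, is exactly that routine verification spelled out. Your remark about fixing the sign convention of $M_{2\ell+1}(t)$ so that it is consistent with \Cref{rem:rot} (i.e.\ with $\gamma_{2k+1}(s+t)=T_t(\gamma_{2k+1}(s))$) is the right way to resolve the only point of possible ambiguity.
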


Using Hopf coordinates, for a given point $q = q(\theta_1,\ldots,\theta_{k+1};a_1,\ldots,a_{k+1}) \in \Sp^{2k+1}$, we then define the following function $P_{2k+1}(\cdot;q):(-\pi,\pi]\to\R$ given by
\begin{equation}\label{eq:P}
    P_{2k+1}(t;q):= q\cdot \gamma_{2k+1}(t) = \frac{1}{\sqrt{k+1}}\sum_{\ell=0}^{k}a_\ell \,\cos((2\ell+1)t-\theta_{\ell+1}),\end{equation}
    so that $d_{2k+1}(q,\gamma_{2k+1}(t)) = \arccos\big(P_{2k+1}(t;q)\big).$

\subsubsection{Some properties of $\gamma_{2k+1}$ and the TMC-EPC}

The following conjecture is suggested by the overall goal of proving that the distortion of the TMC-EPC is minimal (see \Cref{conj:tmc}).  Recall the discussion in \Cref{sec:interp-EPC} which yields a lower bound for an EPC via the distortion of the embedding map and the covering radius of the projection $\Pi_{2k+1}:\Sp^{2k+1}\to \gamma_{2k+1}(\Sp^1)$:
\begin{equation}\label{eq:motiv-TMC-EPC}\dis(\psi_{2k+1})\geq \max\big(\dis(\gamma_{2k+1}),\rho(\Pi_{2k+1})\big).\end{equation}

\begin{conjecture}[covering radius]\label{conj:cov}
$$\rho(\Pi_{2k+1}) = \max_{q\in \Sp^{2k+1}} \min_{t\in\Sp^1}d_{2k+1}(q,\gamma_{2k+1}(t)) \leq \frac{\delta_k}{2} = \frac{\pi k}{2k+1}.$$
\end{conjecture}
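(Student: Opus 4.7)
The plan is to establish the stronger estimate
\[
\rho(\Pi_{2k+1}) \;\leq\; \arccos\!\left(\frac{1}{\sqrt{2(k+1)}}\right),
\]
from which the conjectured bound $\rho(\Pi_{2k+1})\leq \tfrac{\pi k}{2k+1}$ will follow via an elementary trigonometric inequality. The main tools are Parseval's identity and the Jensen-type inequality $\|\cdot\|_{L^\infty}\geq \|\cdot\|_{L^2}$ applied to $t\mapsto P_{2k+1}(t;q):=q\cdot\gamma_{2k+1}(t)$.

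First I would fix $q\in\Sp^{2k+1}$ and parametrize it in the Hopf coordinates of \Cref{sec:hopf}, so that by \eqref{eq:P}
\[
P_{2k+1}(t;q) \;=\; \frac{1}{\sqrt{k+1}}\sum_{\ell=0}^{k} a_{\ell+1}\cos\bigl((2\ell+1)t-\theta_{\ell+1}\bigr), \qquad \sum_{\ell=0}^{k} a_{\ell+1}^2 \;=\; 1.
\]
The antipodality of $\gamma_{2k+1}$ (\Cref{rem:antipodal}) gives $P_{2k+1}(t+\pi;q)=-P_{2k+1}(t;q)$, so $\max_{t\in\Sp^1} P_{2k+1}(t;q)=\|P_{2k+1}(\cdot;q)\|_\infty$. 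Applying Parseval to the orthogonal system $\{\cos((2\ell+1)t),\sin((2\ell+1)t)\}_{\ell=0,\ldots,k}$ on $\Sp^1$,
\[
\|P_{2k+1}(\cdot;q)\|_{L^2(\Sp^1)}^2 \;=\; \frac{1}{2\pi}\int_0^{2\pi} P_{2k+1}(t;q)^2\,dt \;=\; \frac{1}{2(k+1)}\sum_{\ell=0}^{k} a_{\ell+1}^2 \;=\; \frac{1}{2(k+1)},
\]
and the bound $\|\cdot\|_\infty\geq \|\cdot\|_{L^2}$ yields $\max_t q\cdot\gamma_{2k+1}(t)\geq 1/\sqrt{2(k+1)}$, that is $\min_t d_{2k+1}(q,\gamma_{2k+1}(t))\leq \arccos\bigl(1/\sqrt{2(k+1)}\bigr)$ for every $q\in\Sp^{2k+1}$.

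It then remains to check the elementary inequality $\arccos(1/\sqrt{2(k+1)})\leq \tfrac{\pi k}{2k+1}$ for $k\geq 1$, equivalently $\sin\bigl(\tfrac{\pi}{2(2k+1)}\bigr)\leq \tfrac{1}{\sqrt{2(k+1)}}$. Substituting $u:=1/(2k+1)\in(0,1/3]$ (so that $2(k+1)=(u+1)/u$), this reduces to $u\geq \tan^2(\pi u/2)$. Equality holds at $u=1/3$ since $\tan(\pi/6)=1/\sqrt 3$, which recovers $\rho(\Pi_3)\leq\pi/3$ for the case $k=1$; strict inequality on $(0,1/3)$ follows from a short calculus argument on $g(u):=u-\tan^2(\pi u/2)$, since $g(0)=g(1/3)=0$ and $g'(u)=1-\pi\tan(\pi u/2)\sec^2(\pi u/2)$ is strictly decreasing on the interval (both factors in the product being positive and increasing for $\pi u/2\in(0,\pi/6)$), so $g$ is strictly concave and hence strictly positive between its two boundary zeros.

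The only real obstacle is verifying this last elementary inequality; the Parseval/Jensen step is immediate. As a byproduct, the argument is tight only at $k=1$ and strictly stronger than the conjecture for $k\geq 2$, so pinning down the exact value of $\rho(\Pi_{2k+1})$ would likely require a finer Chebyshev-type equioscillation analysis for trigonometric polynomials with odd frequencies $1,3,\ldots,2k+1$.
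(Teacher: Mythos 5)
Your argument is correct, and it takes a genuinely different route from the paper — indeed it establishes more than the paper does, since the paper leaves this statement as \Cref{conj:cov} and only proves the case $k=1$ (\Cref{thm:cov-3}) by decomposing $\Sp^3$ into the spherical cells induced by the facial structure of the Barvinok--Novik polytope $\mathcal{B}_4$ (\Cref{thm:smilansky}, \Cref{coro:sinn}) and bounding the covering radius cell by cell; the general strategy sketched in \Cref{sec:cov} is blocked for $k\geq 2$ by the lack of a complete description of the faces of $\mathcal{B}_{2k+2}$. Your averaging argument sidesteps the polytope entirely: since all frequencies of $t\mapsto P_{2k+1}(t;q)=q\cdot\gamma_{2k+1}(t)$ are odd, $P_{2k+1}(t+\pi;q)=-P_{2k+1}(t;q)$ so its maximum equals its sup norm, and Parseval applied to the orthogonal frequencies $1,3,\ldots,2k+1$ gives $\max_t q\cdot\gamma_{2k+1}(t)\geq \|P_{2k+1}(\cdot;q)\|_{L^2}=1/\sqrt{2(k+1)}$ for every $q$, hence $\rho(\Pi_{2k+1})\leq\arccos\bigl(1/\sqrt{2(k+1)}\bigr)$. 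The closing elementary step also checks out: $\arccos\bigl(1/\sqrt{2(k+1)}\bigr)\leq\tfrac{\pi k}{2k+1}$ is equivalent to $\sin\bigl(\tfrac{\pi}{2(2k+1)}\bigr)\leq 1/\sqrt{2(k+1)}$, i.e.\ to $\tan^2(\pi u/2)\leq u$ at $u=1/(2k+1)\in(0,1/3]$, and $g(u)=u-\tan^2(\pi u/2)$ is strictly concave on $(0,1/3)$ with $g(0)=g(1/3)=0$, so the inequality holds, with equality exactly at $k=1$ (recovering the conjectured $\delta_1/2=\pi/3$) and strictly better constants for $k\geq 2$; this is consistent with the paper's numerics (\Cref{rem:cov-3-tighter} gives $\rho(\Pi_3)\approx 0.923<\pi/3$). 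What each approach buys: your $L^2$/Parseval bound settles \Cref{conj:cov} for all $k$ in a few lines and even sharpens the constant to $\arccos\bigl(1/\sqrt{2(k+1)}\bigr)$, whereas the paper's Voronoi--convex-hull duality (\Cref{prop:voro-conv}), while currently limited to $k=1$ for this particular question, yields finer geometric information — which cells realize the covering radius and the structure of $\partial F_{2k+1}(0)$ — that an averaging argument cannot see and that is needed elsewhere (e.g.\ for \Cref{thm:disc-gen-k} and the distortion analysis); as you note, pinning down the exact value of $\rho(\Pi_{2k+1})$ would still require that finer analysis.
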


\begin{remark}\label{rem:cov-radius-pi-by-2}
Note that the LHS $\rho(\Pi_{2k+1})$ is always bounded above by $\tfrac{\pi}{2}$. Indeed, if $q$ is any point on $\Sp^{2k+1}$, then
$$d_{2k+1}(q,\gamma_{2k+1}(t))+d_{2k+1}(q,-\gamma_{2k+1}(t)) = \pi$$
for every $t\in\Sp^1$. But, since $-\gamma_{2k+1}(t) = \gamma_{2k+1}(-t)$, the point $q$ is at distance at most $\tfrac{\pi}{2}$ from the set $\{\gamma_{2k+1}(t),\gamma_{2k+1}(-t)\}.$
\end{remark}

\begin{remark}
    Experimentally, we've obtained the estimate $\rho(\Pi_3)\approx 0.9229.$ Also, we've obtained the estimate $\dis(\gamma_3)\simeq 0.8128$ (see \Cref{table:dis-gamma}). Using these,  one obtains 
    $$\dgh(\Sp^1,\Sp^3)\leq \tfrac{1}{2}\dis(\gamma_3)+\dha(\Sp^3,\gamma_3)\approx  1.3293$$
    which is larger than the desired upper bound $\tfrac{\pi}{3}\approx 1.0472.$
\end{remark}

In \Cref{sec:cov} we will suggest a strategy for approaching  \Cref{conj:cov} via the combinatorial structure of the so called Barvinok-Novik polytope  (see \Cref{thm:cov-3}). 

\medskip

Also motivated by \Cref{eq:motiv-TMC-EPC}, the following proposition establishes that, when restricted to the image of $\gamma_{2k+1}\subset \Sp^{2k+1}$,  the distortion of $\psi_{2k+1}$, does not exceed $\delta_k$. This can be interpreted as a partial result towards \Cref{conj:tmc}.

\medskip
Note that, by definition of $\psi_{2k+1}$ one has $F_{2k+1}(t)\cap \Ima(\gamma_{2k+1}) = \{\gamma_{2k+1}(t)\}$ for each $t\in \Sp^1$.

\begin{proposition}[distortion of $R_{2k+1}$ restricted to TMC]\label{prop:restrict}
For all $s,t \in \Sp^1$ one has 
$$\big|d_{2k+1}\big(\gamma_{2k+1}(s),\gamma_{2k+1}(t)\big)-d_1(s,t)\big|\leq \delta_k.$$
In other words, 
$$\dis(\gamma_{2k+1})\leq \delta_k.$$
\end{proposition}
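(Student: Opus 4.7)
The plan is to reduce the claim to a one-variable trigonometric inequality and then pair terms using the product-to-sum identity. Using $\cos\theta\cos\theta' + \sin\theta\sin\theta' = \cos(\theta-\theta')$, one computes
\[
\gamma_{2k+1}(s)\cdot\gamma_{2k+1}(t) \,=\, \tfrac{1}{k+1}\sum_{\ell=0}^{k}\cos\bigl((2\ell+1)(s-t)\bigr) \,=:\, P(s-t),
\]
so setting $u := s-t \in [0,\pi]$ and $f(u) := \arccos P(u)$, the claim becomes $|f(u) - u| \le \delta_k$. Write $\alpha := \pi/(2k+1) = \pi - \delta_k$. On $[\alpha, \pi]$ the upper bound $f(u) \le u + \delta_k$ is automatic because $u + \delta_k \ge \pi \ge f(u)$; on $[0, \delta_k]$ the lower bound $f(u) \ge u - \delta_k$ is automatic because $u - \delta_k \le 0 \le f(u)$; and antipodality of $\gamma_{2k+1}$ (\Cref{rem:antipodal}) gives $P(\pi - u) = -P(u)$, hence $f(\pi - u) = \pi - f(u)$, so the remaining lower bound on $[\delta_k, \pi]$ is equivalent via $u \mapsto \pi - u$ to the upper bound on $[0, \alpha]$. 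Thus everything reduces to proving
\[
f(u) \,\le\, u + \delta_k \qquad \text{for } u \in [0, \alpha].
\]

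Both sides lie in $[0,\pi]$ and $\cos$ is decreasing there, so using $\cos(u+\delta_k) = \cos(u + \pi - \alpha) = -\cos(\alpha - u)$, the inequality above is equivalent to
\[
(k+1)\bigl[P(u) + \cos(\alpha - u)\bigr] \,=\, \sum_{\ell=0}^{k}\bigl[\cos\bigl((2\ell+1)u\bigr) + \cos(\alpha - u)\bigr] \,\ge\, 0.
\]
The key step is to apply the sum-to-product identity termwise:
\[
\cos\bigl((2\ell+1)u\bigr) + \cos(\alpha - u) \,=\, 2\cos\!\bigl(\ell u + \tfrac{\alpha}{2}\bigr)\,\cos\!\bigl((\ell+1)u - \tfrac{\alpha}{2}\bigr).
\]
For $u\in[0,\alpha]$ and $\ell\in\{0,\dots,k\}$, one checks that $\ell u + \alpha/2 \in [\alpha/2, (2\ell+1)\alpha/2]$ and $(\ell+1)u - \alpha/2 \in [-\alpha/2, (2\ell+1)\alpha/2]$. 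Both intervals lie inside $[-\pi/2,\pi/2]$ since $(2k+1)\alpha/2 = \pi/2$ by definition of $\alpha$, so every cosine factor is non-negative and summing yields the inequality.

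The only non-routine step is recognizing this pairing with the auxiliary term $\cos(\alpha - u)$: this particular choice makes the arguments of the two product factors land exactly inside $[-\pi/2,\pi/2]$, with the identity $(2k+1)\alpha/2 = \pi/2$ precisely handling the extreme case $\ell = k$. This alignment is also why $\delta_k = \pi - \pi/(2k+1)$ is the natural constant appearing in the statement.
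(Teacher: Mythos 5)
Your proposal is correct, and it is a genuinely different argument from the one in the paper. Both start from the same identity $\gamma_{2k+1}(s)\cdot\gamma_{2k+1}(t)=h_k(s-t)$ (your $P$ is exactly $h_k$ from \Cref{eq:hk}), but from there the mechanisms diverge. The paper's proof uses the averaging sandwich of item 3 of \Cref{prop:props-hk} (since $h_k$ is the mean of the $\cos((2\ell+1)t)$, it lies between their min and max), applies $\arccos$ to trap $d_{2k+1}(\gamma_{2k+1}(0),\gamma_{2k+1}(t))$ between $\min_\ell d_1(0,(2\ell+1)t)$ and $\max_\ell d_1(0,(2\ell+1)t)$, and then invokes \Cref{lem:simple}, which says each frequency-multiplied circle distance $d_1(0,(2\ell+1)t)$ differs from $d_1(0,t)$ by at most $\delta_\ell\le\delta_k$. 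You instead exploit the symmetries $P(-u)=P(u)$ and $P(\pi-u)=-P(u)$ (antipodality) to reduce both the upper and lower bounds to the single inequality $\arccos P(u)\le u+\delta_k$ on $[0,\pi/(2k+1)]$, and then prove it by pairing each $\cos((2\ell+1)u)$ with the auxiliary term $\cos(\alpha-u)$ via sum-to-product, checking that both resulting cosine factors have arguments in $[-\pi/2,\pi/2]$ (the extreme case $\ell=k$, $u=\alpha$ landing exactly at $\pi/2$), so every term is nonnegative; I verified the interval bookkeeping and the reduction covers all of $[0,\pi]$. The paper's route is more modular — the sandwich bound and the frequency-scaling lemma are stated as standalone facts — and avoids any symmetry reduction, while your route is self-contained, purely termwise, and makes transparent why $\delta_k=\pi-\tfrac{\pi}{2k+1}$ is the exact threshold at which the pairing degenerates.
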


\begin{remark}\label{rem:dis-gamma}
The upper bound $\delta_k$ given above for $\dis(\gamma_{2k+1})$ is not tight. For example, we can prove $\dis(\gamma_3)\leq \tfrac{\pi}{3} = \tfrac{\delta_1}{2}$ \extension.  However, experimentally, we have verified that it is not true that $\dis(\gamma_{2k+1})\leq \tfrac{\delta_k}{2}$ for all $k$; see  \Cref{table:dis-gamma} below, where this is violated for  $k=5,6$. 
\begin{table}[h!]
\centering
 \begin{tabular}{c| c c c} 
 $k$ & $\dis(\gamma_{2k+1})$ & $\tfrac{\delta_k}{2}$ & $\delta_k$ \\ [0.5ex] 
 \hline\hline
 1 & 0.8128 & 1.0472 & 2.0944 \\ 
 2 & 1.1114 & 1.2566 & 2.5133 \\
 3 & 1.2694 & 1.3464 & 2.6928 \\
 4 & 1.3676 & 1.3963 & 2.7925 \\
5 & 1.4345 & 1.4280 & 2.8560 \\
6 & 1.4831 & 1.4500 & 2.8999 \\
 \end{tabular}
 \caption{Experimentally obtained values of $\dis(\gamma_{2k+1})$ for several values of $k$. See \Cref{rem:dis-gamma}.}
 \label{table:dis-gamma}
 \end{table}
 Our experiments (see \Cref{fig:dis-gamma}) also strongly suggest the following.
\end{remark}

\begin{conjecture}
 $k\mapsto \dis(\gamma_{2k+1})$ is monotonically increasing with $k$.\footnote{Our experiments  indicate that  $\dis(\gamma_{2k+1})$ never exceeds $\approx 1.77$.}
\end{conjecture}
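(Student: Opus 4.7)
The plan is to recast the problem as a one-dimensional analysis and then exploit an explicit recursion relating consecutive trigonometric moment curves. By the rotational symmetry recorded in \Cref{rem:rot}, the inner product $\gamma_{2k+1}(s)\cdot\gamma_{2k+1}(t)$ depends only on $u:=s-t$ and equals
\[
C_k(u):=\frac{1}{k+1}\sum_{\ell=0}^{k}\cos((2\ell+1)u)=\frac{\sin(2(k+1)u)}{2(k+1)\sin u},
\]
so that $\dis(\gamma_{2k+1})=\sup_{u\in[0,\pi]}|g_k(u)|$, where $g_k(u):=\arccos(C_k(u))-u$. The key structural identity is the convex combination
\[
C_{k+1}(u)=\frac{k+1}{k+2}\,C_k(u)+\frac{1}{k+2}\cos((2k+3)u),
\]
expressing $C_{k+1}$ as a perturbation of $C_k$ by a fast-oscillating term of frequency $2k+3$.

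Fix a maximizer $u^*\in(0,\pi)$ of $|g_k|$; the aim is to produce $u^{**}\in(0,\pi)$ with $|g_{k+1}(u^{**})|\geq|g_k(u^*)|$. Assume $g_k(u^*)>0$, i.e., $C_k(u^*)<\cos u^*$ (the opposite sign is handled symmetrically). If $\cos((2k+3)u^*)\leq C_k(u^*)$, the identity above immediately yields $C_{k+1}(u^*)\leq C_k(u^*)$, hence $g_{k+1}(u^*)\geq g_k(u^*)$, and $u^{**}=u^*$ works. Otherwise I would search within a quarter period $\pi/(2(2k+3))$ of $u^*$, on the side $u^{**}\leq u^*$, for a point where $\cos((2k+3)u^{**})$ is close to $-1$, providing a gain of roughly $2/(k+2)$ in the argument of $\arccos$, and combine this with the constraint $u^{**}\leq u^*$ to conclude $g_{k+1}(u^{**})\geq g_k(u^*)$.

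The main obstacle is a mismatch of scales: the gain from the new oscillatory term is $O(1/(k+2))$, while the Lipschitz constant of $C_k$ is $k+1$, so moving $u$ by a quarter period $\pi/(2(2k+3))$ can alter $C_k$ by $O(1)$---potentially swamping the gain. Exploiting that $u^*$ is a critical point of $g_k$ yields $C_k'(u^*)=-\sqrt{1-C_k(u^*)^2}$ and second-order behaviour of $C_k$ near $u^*$; but the Hessian bound $\|C_k''\|_\infty=\tfrac{(2k+1)(2k+3)}{3}=O(k^2)$, combined with a displacement squared of $O(1/k^2)$, still only yields an $O(1)$ perturbation. A more promising alternative is a continuous deformation argument: define
\[
\gamma_{2k+3}^{\lambda}(t):=\frac{1}{\sqrt{k+1+\lambda^2}}\bigl(\cos t,\sin t,\ldots,\cos((2k+1)t),\sin((2k+1)t),\lambda\cos((2k+3)t),\lambda\sin((2k+3)t)\bigr)
\]
for $\lambda\in[0,1]$, so that $\dis(\gamma_{2k+3}^{0})=\dis(\gamma_{2k+1})$ (via the isometric equatorial embedding $\iota_{2k+1,2k+3}$) and $\dis(\gamma_{2k+3}^{1})=\dis(\gamma_{2k+3})$. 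The conjecture follows if $\lambda\mapsto\dis(\gamma_{2k+3}^{\lambda})$ is non-decreasing, which recasts the discrete monotonicity as a one-parameter variational principle amenable to envelope-theorem techniques. The hard part will be controlling discontinuities in the maximizer set as $\lambda$ varies and verifying that $\partial_\lambda g^{\lambda}$ has the favourable sign at active maximizers; iterating the argument $k\mapsto k+1$ then recovers the full monotonicity statement.
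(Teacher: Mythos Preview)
This statement is posed in the paper as an open \emph{conjecture}, supported only by numerical experiments (\Cref{table:dis-gamma} and \Cref{fig:dis-gamma}); the paper offers no proof. So there is no benchmark argument to compare against, and the only question is whether your sketch closes to an actual proof.

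It does not, and the obstruction you yourself flag is the crux. Your continuous deformation $\gamma_{2k+3}^\lambda$ is a natural interpolation, but it does not sidestep the sign problem. Writing $C_k^\lambda(u):=\gamma_{2k+3}^\lambda(0)\cdot\gamma_{2k+3}^\lambda(u)$, one computes
\[
\partial_\lambda C_k^\lambda(u)=\frac{2\lambda}{k+1+\lambda^2}\bigl(\cos((2k+3)u)-C_k^\lambda(u)\bigr),
\]
so by the envelope theorem the $\lambda$-derivative of $\sup_u|g_k^\lambda(u)|$ is governed (up to a positive factor coming from $\arccos'$) by the sign of $C_k^\lambda(u^*)-\cos((2k+3)u^*)$ at the active maximizer $u^*$. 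This is exactly the condition you needed in the discrete step and could not secure. The deformation merely reparametrizes the difficulty continuously: whenever $\cos((2k+3)u^*)$ lands on the wrong side of $C_k^\lambda(u^*)$, instantaneous monotonicity fails, and you would then need a global argument excluding net decrease over $\lambda\in[0,1]$. None is supplied, and the acknowledged issue of jumping maximizers only compounds this.

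The nearby-point search is likewise not repaired. You correctly observe that moving $u$ by a quarter period $O(1/k)$ can shift $C_k$ by $O(1)$; exploiting the first-order condition $C_k'(u^*)=-\sqrt{1-C_k(u^*)^2}$ only passes the burden to the second-order term, which your own bound $\|C_k''\|_\infty=O(k^2)$ shows is still $O(1)$. Nothing in the proposal bridges this gap, so what you have is a plausible heuristic and a clean reformulation, but not a proof.
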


\begin{figure}
\centering
\includegraphics[width=0.5\linewidth]{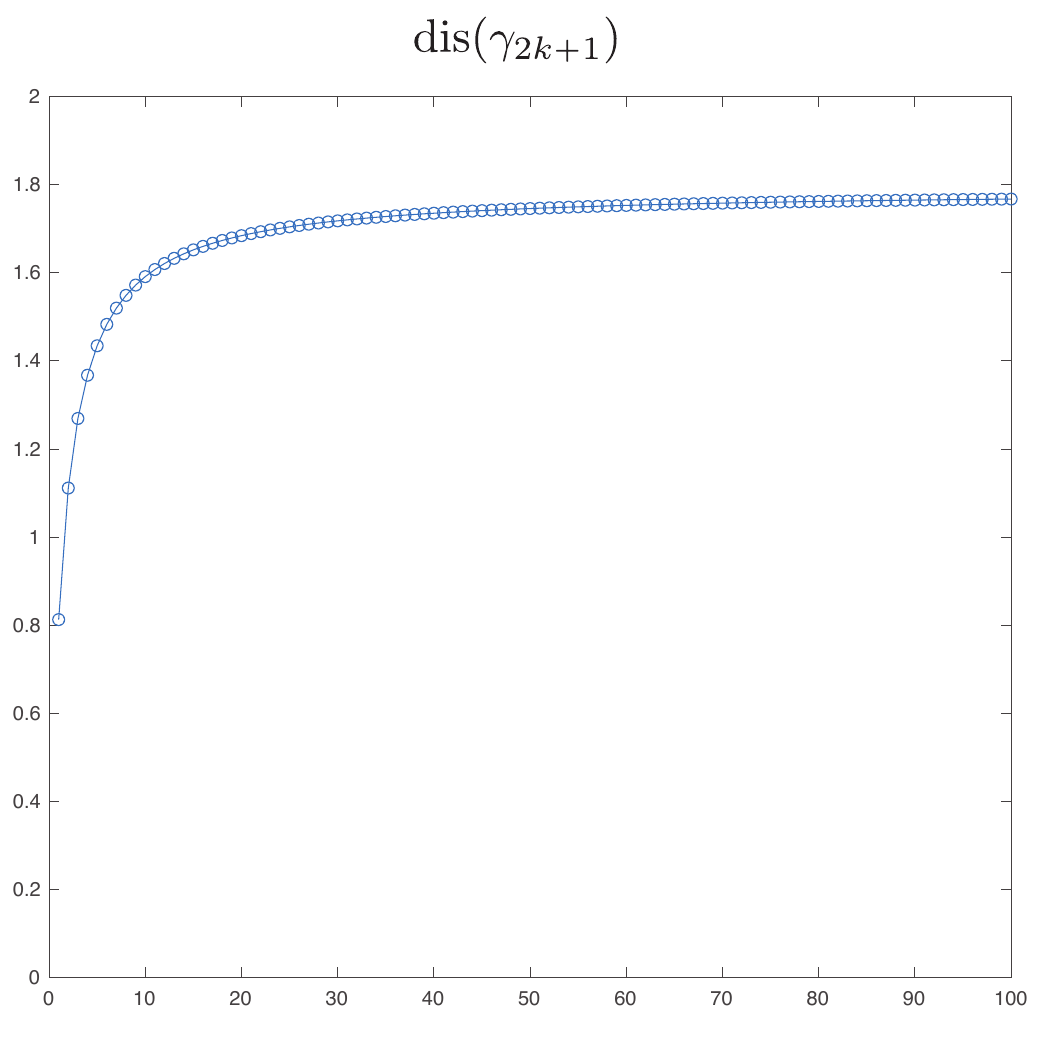}
\caption{$\dis(\gamma_{2k+1})$ as a function of $k$. See \Cref{rem:dis-gamma}} \label{fig:dis-gamma}
\end{figure}

The following function appears often  in our considerations. For $t\in \Sp^{2k+1}$ let 
\begin{equation}\label{eq:hk}h_k(t):= \gamma_{2k+1}(0)\cdot \gamma_{2k+1}(t) = \frac{1}{k+1}\sum_{\ell=0}^k \cos((2\ell+1)t).\end{equation}
Note that for $s,t\in \Sp^{2k+1}$,
$$d_{2k+1}(\gamma_{2k+1}(s),\gamma_{2k+1}(t))\big) = \arccos\big(h_k(s-t)\big).$$
Here we collect a number of useful properties of this function

\begin{proposition}\label{prop:props-hk}
\begin{enumerate}
    \item $h_k(t) =  h_k(-t)$ for all $t\in\Sp^1$.
    \item For $t\in[0,\pi]$ we have $$h_k(t)=\begin{cases}
    \frac{1}{2(k+1)}\frac{\sin\big(2(k+1)t\big)}{\sin(t)} & \text{for $t\neq 0,\pi$}\\
    1 & \text{for $t=0$}\\
    -1 & \text{for $t=\pi$}.
    \end{cases}$$
    \item For all $t\in \Sp^1$,
    $$\min_{\ell=0,\ldots,k} \cos((2\ell+1)t)\leq h_k(t)\leq \max_{\ell=0,\ldots,k} \cos((2\ell+1)t).$$
    \item For $t\in[0,\delta_k]$, 
    $$h_k(t)\geq \cos(\delta_k).$$

\end{enumerate}
   
\end{proposition}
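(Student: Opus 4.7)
The plan is to dispatch items (1), (2), (3) quickly and spend essentially all the effort on (4). Item (1) is immediate because each summand $\cos((2\ell+1)t)$ is an even function of $t$. Item (3) is equally immediate because $h_k(t)$ is the arithmetic mean of the $k{+}1$ quantities $\cos((2\ell+1)t)$, hence lies between their extreme values. For (2), I would use the classical telescoping trick: multiply the sum $\sum_{\ell=0}^{k}\cos((2\ell+1)t)$ by $2\sin(t)$ and apply the product-to-sum identity $2\sin(t)\cos((2\ell+1)t) = \sin((2\ell+2)t) - \sin(2\ell t)$. The sum telescopes to $\sin(2(k+1)t)$, which after dividing by $2(k+1)\sin(t)$ yields the formula on $(0,\pi)$. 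The values at $t=0$ and $t=\pi$ are obtained by direct substitution into the definition of $h_k$.

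For (4), the approach is to use the closed form of (2) to control where $h_k$ can go negative. Let $t^\star := \pi/(2(k+1))$, which by (2) is the first positive zero of $h_k$. On $[0,t^\star]$ the numerator $\sin(2(k+1)t)$ is nonnegative, so $h_k(t)\geq 0 > \cos(\delta_k)$ (using $\delta_k > \pi/2$ for $k\geq 1$). The substantive range is $[t^\star,\delta_k]$. Here $h_k$ oscillates in sign, but the crude bound
$$|h_k(t)| \;\leq\; \frac{1}{2(k+1)\sin(t)}$$
that comes straight from the closed form suffices, provided $\sin(t)$ can be uniformly bounded below. A short computation confirms that $t^\star \leq \delta_k \leq \pi - t^\star$ (equivalently $\pi/(2(k+1)) + \pi/(2k+1) \leq \pi$), so by concavity of $\sin$ on $[0,\pi]$ we have $\sin(t) \geq \sin(t^\star)$ throughout $[t^\star,\delta_k]$. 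Thus the target inequality reduces to the purely trigonometric statement
$$\frac{1}{2(k+1)\sin(\pi/(2(k+1)))} \;\leq\; \cos\bigl(\pi/(2k+1)\bigr) \;=\; -\cos(\delta_k).$$

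The last inequality is equivalent to $2(k+1)\sin(\pi/(2(k+1)))\cos(\pi/(2k+1)) \geq 1$, which I would prove by combining two elementary bounds that, together, happen to be just tight enough: Jordan's inequality $\sin(x) \geq 2x/\pi$ on $[0,\pi/2]$ gives $\sin(\pi/(2(k+1))) \geq 1/(k+1)$, and the fact that $\pi/(2k+1) \leq \pi/3$ for $k\geq 1$ gives $\cos(\pi/(2k+1)) \geq \cos(\pi/3) = 1/2$. Multiplying yields exactly $1$. The main obstacle I expect in writing this up cleanly is precisely this last step: each of the two elementary bounds is individually loose, yet their product lands exactly on the threshold $1$, so one must verify that neither Jordan nor the cosine bound is applied in a regime where it becomes equality (here $k=0$ and $k=1$, respectively, but we are in $k\geq 1$ and on $\pi/(2(k+1)) \leq \pi/4 < \pi/2$, so both inequalities are strict for the relevant $k$). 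If one wanted a cleaner quantitative cushion, an alternative is to note that $(k+1)\sin(\pi/(2(k+1)))$ is increasing in $k$ with limit $\pi/2 > 1$, and then check the case $k=1$ by hand.
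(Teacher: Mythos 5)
Your proof is correct, and it actually does more than the paper: the paper's own proof of this proposition only addresses items (1)--(3) (dismissing them as obvious or as a standard Lagrange-identity computation, exactly as you do) and is silent on item (4), which is the only part requiring a genuine argument --- note that item (3) alone cannot give it, since individual terms $\cos((2\ell+1)t)$ do dip below $\cos(\delta_k)$ on $[0,\delta_k]$ (e.g.\ $\cos(3t)=-1$ at $t=\pi/3$ for $k=1$). Your treatment of (4) is sound: on $[0,\tfrac{\pi}{2(k+1)}]$ the closed form of (2) gives $h_k\geq 0>\cos(\delta_k)$ (using $\delta_k>\pi/2$ for $k\geq 1$); on $[\tfrac{\pi}{2(k+1)},\delta_k]$ the bound $|h_k(t)|\leq \tfrac{1}{2(k+1)\sin t}$ together with $\sin t\geq \sin\bigl(\tfrac{\pi}{2(k+1)}\bigr)$ (valid because $\delta_k\leq \pi-\tfrac{\pi}{2(k+1)}$, which is just $\tfrac{1}{2k+2}\leq\tfrac{1}{2k+1}$) reduces everything to $2(k+1)\sin\bigl(\tfrac{\pi}{2(k+1)}\bigr)\cos\bigl(\tfrac{\pi}{2k+1}\bigr)\geq 1$, and Jordan's inequality plus $\cos\bigl(\tfrac{\pi}{2k+1}\bigr)\geq\tfrac12$ for $k\geq1$ delivers exactly that; since the statement is a non-strict inequality, your worry about the two bounds being individually tight is moot. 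Two tiny cosmetic points: the parenthetical condition you display corresponds to $t^\star\leq\delta_k$ rather than $\delta_k\leq\pi-t^\star$ (both are trivially true, so nothing breaks), and the case $k=0$ is vacuous since then $\delta_0=0$. In short: same route as the paper where the paper gives one, and a complete elementary argument where the paper gives none.
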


\medskip

The following lemma is elementary but useful.
\begin{lemma}\label{lem:simple}
Let $k$ be a non-negative integer. Then, for all $t,s\in\Sp^1$, we have
$$\big|d_1(s,t) - d_1((2k+1)\,s ,(2k+1)\,t)\big|\leq \delta_k.$$ 
\end{lemma}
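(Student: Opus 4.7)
The plan is to reduce the claim to a single-variable inequality using the rotation invariance of $\Sp^1$, and then to verify it by a short case analysis.

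First I would exploit the fact that $d_1(s,t)$ and $d_1((2k+1)s,(2k+1)t)$ depend only on $s-t$ modulo $2\pi$, and that both are invariant under $u\mapsto -u$. Setting $u := d_1(s,t) \in [0,\pi]$, the claim reduces to showing
\[
\bigl|\,u - \tilde{d}\bigl((2k+1)u\bigr)\,\bigr| \,\leq\, \delta_k \qquad \text{for all } u\in[0,\pi],
\]
where $\tilde{d}(w):= \min_{m\in\Z}|w-2\pi m|$ denotes the circular distance from $0$ to $w\bmod 2\pi$.

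For the main step, given $u\in[0,\pi]$, I would take the unique $n\in\Z$ and $r\in(-\pi,\pi]$ with $(2k+1)u = 2\pi n + r$, so that $\tilde{d}((2k+1)u)=|r|$. A quick check using $0\leq(2k+1)u\leq(2k+1)\pi$ forces $n\in\{0,1,\dots,k\}$. I would then split according to the sign of $r$:
\begin{itemize}
\item If $r\geq 0$, then $u-|r|=2\pi n-2ku$. Combined with $\tfrac{2\pi n}{2k+1}\leq u\leq \tfrac{2\pi n+\pi}{2k+1}$, this gives $\tfrac{2\pi(n-k)}{2k+1} \leq 2\pi n-2ku \leq \tfrac{2\pi n}{2k+1}$, and for $n\in\{0,\dots,k\}$ both endpoints lie in $[-\delta_k,\delta_k]$.
\item If $r<0$ then necessarily $n\geq 1$, and $u-|r|=(2k+2)u-2\pi n$. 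Combined with $\tfrac{2\pi n-\pi}{2k+1}\leq u <\tfrac{2\pi n}{2k+1}$, this gives $\tfrac{2\pi(n-k-1)}{2k+1}\leq (2k+2)u-2\pi n < \tfrac{2\pi n}{2k+1}$, and for $n\in\{1,\dots,k\}$ both endpoints again lie in $[-\delta_k,\delta_k]$.
\end{itemize}

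Sharpness is built into the argument: $u-\tilde{d}((2k+1)u)$ attains $+\delta_k$ at $u=\tfrac{2k\pi}{2k+1}$ (first case, $n=k$, $r=0$) and $-\delta_k$ at $u=\tfrac{\pi}{2k+1}$ (first case, $n=0$, $r=\pi$), consistent with the tightness of $\delta_k$ already observed for the lower bounds in \Cref{thm:lower-bound}. The whole argument is essentially bookkeeping on the admissible range of $n$ in each case; the only thing to watch is the transition values ($r=0$ and $r=\pi$) and the endpoints $u=0,\pi$, where one checks directly that the bound still holds. I do not anticipate any conceptual obstacle.
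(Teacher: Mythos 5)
Your argument is correct: the reduction to the single variable $u=d_1(s,t)\in[0,\pi]$ is valid, the winding number $n$ is indeed confined to $\{0,\dots,k\}$, and on each piece the function $u\mapsto u-\tilde d\big((2k+1)u\big)$ is affine, so bounding it at the endpoints as you do settles the claim; the degenerate case $k=0$ and the transition values $r=0,\pi$ cause no trouble. The route is, however, organized differently from the paper's. The paper splits the two-sided bound into the two one-sided inequalities $d_1((2k+1)t,0)\leq d_1(t,0)+\delta_k$ and $d_1(t,0)\leq d_1((2k+1)t,0)+\delta_k$, observes that each is trivially true outside an interval of length $\pi-\delta_k=\tfrac{\pi}{2k+1}$ (namely $[0,\pi-\delta_k]$ and $[\delta_k,\pi]$ respectively), and then on those short intervals the substitution $t=\delta_k+\nu$ makes $(2k+1)t$ reduce mod $2\pi$ to $(2k+1)\nu\in[0,\pi]$, so only two explicit computations are needed and no general winding-number bookkeeping appears. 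Your decomposition by $n\in\{0,\dots,k\}$ and the sign of the remainder $r$ is more mechanical and checks more cases, but it treats both directions of the inequality simultaneously and, as a byproduct, pins down exactly where the extreme values $\pm\delta_k$ are attained (at $u=\tfrac{2k\pi}{2k+1}$ and $u=\tfrac{\pi}{2k+1}$), a sharpness statement the paper's proof does not record. Either proof is acceptable; the paper's is shorter, yours is more systematic and self-verifying.
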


\begin{proof}[Proof of \Cref{prop:restrict}]
The statement is equivalent to the condition
$$\big|d_{2k+1}\big(\gamma_{2k+1}(0),\gamma_{2k+1}(t)\big)-d_1(0,t)\big|\leq \delta_k$$
for all $t\in \Sp^1.$  
Note that 
$$\cos\big(d_{2k+1}(\gamma_{2k+1}(0),\gamma_{2k+1}(t))\big) =h_k(t).$$
Therefore, by item 3 of \Cref{prop:props-hk} ,
$$\max_{\ell=0,\ldots,k} \arccos(\cos((2\ell+1)t))\geq \arccos(h_k(t))\geq \max_{\ell=0,\ldots,k} \arccos(\cos((2\ell+1)t)),$$
which is equivalent to 
$$\max_{\ell=0,\ldots,k} d_1(0,(2\ell+1)t))\geq d_{2k+1}(\gamma_{2k+1}(0),\gamma_{2k+1}(t))\geq \min_{\ell=0,\ldots,k} d_1(0,(2\ell+1)t).$$
The conclusion now follows from \Cref{lem:simple} and the fact that $\delta_\ell \leq \delta_k$ whenever $\ell\leq k$. For example, have have that the RHS above can be bounded as follows
\begin{align*}\min_{\ell=0,\ldots,k} d_1(0,(2\ell+1)t) &\geq  \min_{\ell=0,\ldots,k} \big(d_1(0,t)-\delta_\ell\big)\\ &= d_1(0,t) - \max_{\ell=0,\ldots,k}\delta_\ell\\ &= d_1(0,t) - \delta_k.\end{align*}

\end{proof}

\begin{remark} Via \Cref{prop:props-hk}, we have the following more or less explicit expression 
$$\dis(\gamma_{2k+1})=\sup_{t\in(0,\pi)}\left| \arccos(h_k(t))-t\right|=\sup_{t\in(0,\pi)}\left|\arccos\left(\frac{\sin\big(2(k+1)t\big)}{2(k+1)\sin(t)}\right)-t\right|.$$

Note that for $t=t_k:= \tfrac{\pi}{2(k+1)}$ one has $h_k(t_k)=0,$
which implies that 
$$\dis(\gamma_{2k+1})  \geq  \tfrac{\pi}{2} \tfrac{k}{k+1}.$$
It would be interesting to compute a more precise estimate of $\dis(\gamma_{2k+1})$ \extension.

\end{remark}

\begin{proof}[Proof of \Cref{prop:props-hk}]
    Item 1 is obvious. Item 2 follows via standard trigonometric manipulations (more precisely, through one of the so called Lagrange trigonometric identities). Item 3 is also obvious. 
\end{proof}

\begin{proof}[Proof of \Cref{lem:simple}]
It suffices to to prove the following two inequalities:

\begin{equation}\label{eq:plus}
d_1((2k+1)t,0) \leq d_1(t,0) + \delta_k\,\,\mbox{for $t\in[0,\pi-\delta_k]$}
\end{equation}
and
\begin{equation}\label{eq:minus}
d_1(t,0) \leq d_1((2k+1)t,0) + \delta_k\,\,\mbox{for $t\in[\delta_k,\pi]$}.
\end{equation}

Let's verify \Cref{eq:plus} first. Note that, since $\pi-\delta_k = \tfrac{\pi}{2k+1}$, 
for $t\in[0,\pi-\delta_k]$ one has $(2k+1)t \in[0,\pi]$ and that $(2k+1)t\geq t$. Therefore (see \Cref{fig:lemma-circ}), 
$$d_1((2k+1),t,0) = d_1((2k+1)t,t)+d_1(t,0)= 2kt + d_1(t,0)\leq \tfrac{2k\pi}{2k+1} +d_1(t,0) = \delta_k +d_1(t,0).$$

Now we tackle \Cref{eq:minus}. In that case, write $t=\delta_k + \nu$ for $\nu\in[0,\pi-\delta_k]$. Then,  $(2k+1)t = 2k\pi + (2k+1)\nu$  equals $(2k+1)\nu$ modulo $2\pi$. Furthermore,  given that $\nu\in[0,\pi-\delta_k]$, we have  $(2k+1)\nu \in [0,\pi]$ and $(2k+1)\nu\geq\nu.$ Hence,
$d_1(t,0) = \delta_k+\nu$ and $d_1((2k+1)t,0) = (2k+1)\nu$ so that 
$$d_1(t,0) = \nu+\delta_k \leq (2k+1)\nu+\delta_k = d_1((2k+1)t,0)+\delta_k.$$
\end{proof}

\begin{figure}
\centering
\includegraphics[width=\linewidth]{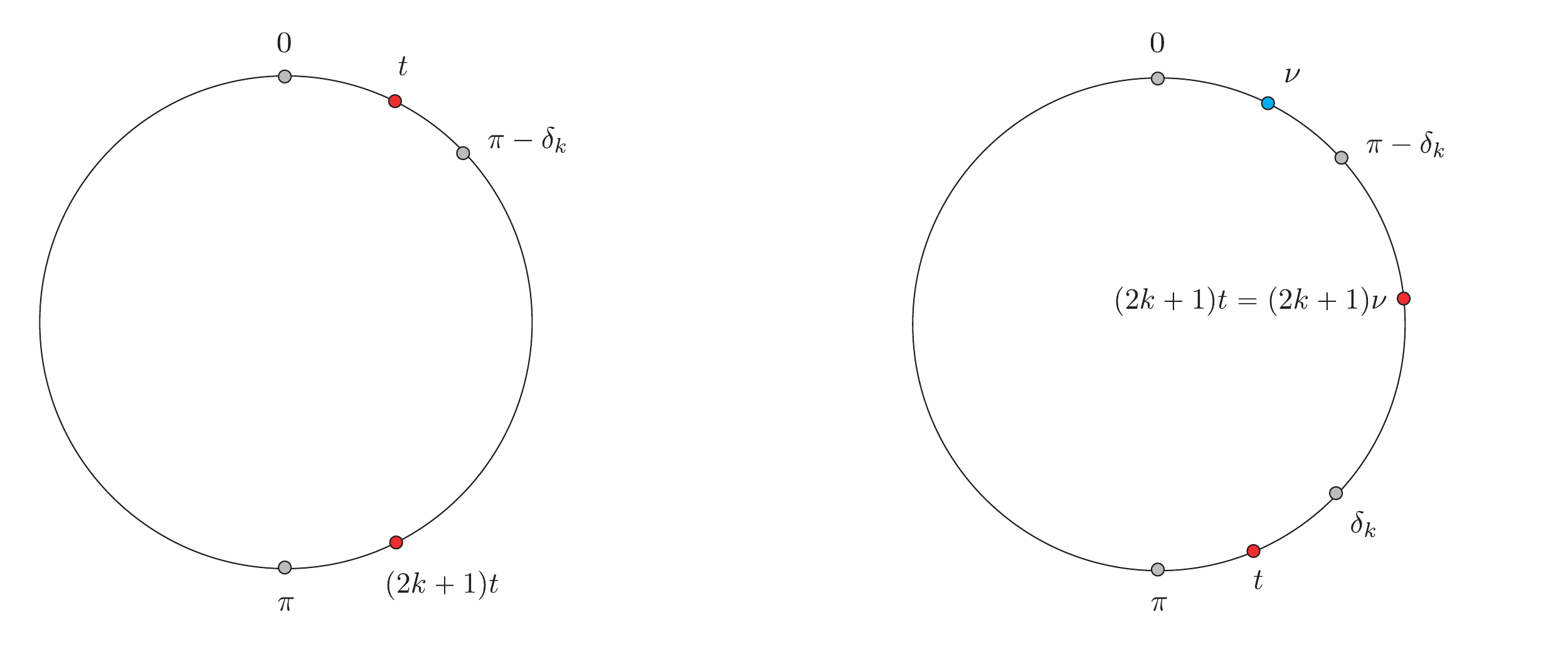}
\caption{An illustration for the proof of \Cref{lem:simple}. Left: the configuration corresponding to \Cref{eq:plus}. Right: the configuration corresponding to \Cref{eq:minus}.}\label{fig:lemma-circ}
\end{figure}

\section{The case of $\Sp^1$ versus $\Sp^3$}

We will use Hopf coordinates $(\theta_1,\theta_2,\zeta)$ on $\Sp^3$ so that a generic point on $\Sp^3\subset \R^4$ can be written  as  
$$q(\theta_1,\theta_1,\zeta) :=\big(\cos(\theta_1)\cos(\zeta),\sin(\theta_1)\cos(\zeta),\cos(\theta_2)\sin(\zeta),\sin(\theta_2)\sin(\zeta)\big),$$
for $\theta_1,\theta_2\in[-\pi,\pi)$ and $\zeta\in[0,\frac{\pi}{2}].$ See \Cref{fig:hopf-s3} for an illustration.

\subsection{Characterization of the fiber $F_3(0)$}
We have a complete characterization of the fibers of $R_3$. 
\begin{theorem}\label{thm:fiber-3} We have
$$\partial{F_3(0)} = \left\{q(\theta,3\theta+\pi,\zeta(\theta))|\theta \in\left[-\tfrac{\pi}{3},\tfrac{\pi}{3}\right)\right\}$$
where \begin{equation}\label{eq:def-eta-theta}\zeta(\theta):=\arccot\big(3(3-4\sin^2(\theta))\big).\end{equation}
Furthermore, $F_3(0) = \convsph(\partial F_3(0))$, the geodesic convex hull of $\partial F_3(0)$. 
\end{theorem}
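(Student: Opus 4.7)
The plan is to use Hopf coordinates and convert the geometric question into an analytic one about a single trigonometric polynomial. For $q = q(\theta_1, \theta_2, \zeta) \in \Sp^3$, define
$$f_q(t) := q \cdot \gamma_3(t) = \tfrac{1}{\sqrt{2}}\bigl[\cos\zeta\cos(t-\theta_1) + \sin\zeta\cos(3t-\theta_2)\bigr].$$
By construction, $q \in F_3(0)$ iff $f_q$ attains its global maximum at $t=0$, and $q \in \partial F_3(0)$ iff this maximum is additionally attained at some $t_0 \in \Sp^1 \setminus \{0\}$. By \Cref{prop:convex}, $F_3(0)$ is a geodesically convex compact subset of the equator $\Sp^2_0$, and assuming nonempty relative interior (easy to verify from local strict convexity near $\gamma_3(0)$), its boundary $\partial F_3(0)$ is a Jordan curve in $\Sp^2_0 \cong \Sp^2$.

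For the inclusion $\supseteq$, I would substitute $\theta_1 = \theta$, $\theta_2 = 3\theta + \pi$ and perform the change of variables $u := t-\theta$. Using the identity $\cos 3u = 4\cos^3 u - 3\cos u$, this reduces $\sqrt{2}\,f_q(t)$ to a one-variable odd cubic $g(v) := Av - Bv^3$ evaluated at $v = \cos u \in [-1,1]$, with $A = \cos\zeta + 3\sin\zeta$ and $B = 4\sin\zeta$. The critical points of $g$ are $v = \pm v^*$ with $(v^*)^2 = A/(3B)$. Requiring that $t = 0$ (equivalently $v = \cos\theta$) be a critical point of $f_q$ forces
$$\cos^2\theta = \frac{\cos\zeta + 3\sin\zeta}{12\sin\zeta},$$
which rearranges exactly to $\cot\zeta = 3(3-4\sin^2\theta)$, recovering the defining equation of $\zeta(\theta)$. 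One then verifies that $v = v^*$ is the global max of $g$ on $[-1,1]$ by comparing with $g(-v^*) = -g(v^*)$ and with $g(\pm 1)$, the latter amounting to the inequality $\cot\zeta < 9$, which holds for $\theta \neq 0$. Since $\cos(t-\theta) = \cos\theta$ has the two solutions $t=0$ and $t=2\theta$ on $\Sp^1$, the function $f_q$ attains its max at exactly these two points, placing $q \in \partial F_3(0)$. The cases $\theta = 0$ (where $v^* = 1$) and $\theta = \pm\pi/3$ (where $\zeta = \pi/2$, collapsing to the point $(0,0,1,0)$) are checked separately by elementary arguments.

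For the reverse inclusion $\subseteq$, the plan is topological. Let $\Phi(\theta) := q(\theta, 3\theta+\pi, \zeta(\theta))$. I would check that $\Phi$ is continuous on $[-\pi/3, \pi/3]$, injective on $[-\pi/3, \pi/3)$, and that $\Phi(-\pi/3) = \Phi(\pi/3) = (0,0,1,0)$. Consequently, the image $\Gamma := \Phi([-\pi/3, \pi/3))$ is a Jordan curve contained in $\partial F_3(0)$. Since a Jordan curve cannot properly contain another Jordan subcurve, $\Gamma = \partial F_3(0)$. The ``furthermore'' claim $F_3(0) = \convsph(\partial F_3(0))$ then follows immediately from geodesic convexity (\Cref{prop:convex}): a geodesically convex compact region in $\Sp^{2k+1}$ with nonempty relative interior in the equator $\Sp^2_0$ equals the spherical convex hull of its boundary.

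The main obstacle I expect is the rigorous book-keeping in the reverse inclusion, namely verifying injectivity of $\Phi$ on $[-\pi/3, \pi/3)$, the endpoint identification, and that $\partial F_3(0)$ is genuinely a Jordan curve (which in turn requires the relative interior claim). A computationally heavier but more direct alternative would be to take an arbitrary $q \in \partial F_3(0)$ with second maximum at $t_0 \neq 0$ and use the four equations $f_q(0) = f_q(t_0)$, $f_q'(0) = f_q'(t_0) = 0$, together with $|q|=1$, to solve explicitly for $(\theta_1, \theta_2, \zeta)$ in terms of $t_0$; one then identifies $\theta = t_0/2$ and checks that the formulas match.
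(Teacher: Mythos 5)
Your proposal is correct in substance but follows a genuinely different route from the paper for half of the statement. Like the paper, you reduce everything to the function $f_q(t)=q\cdot\gamma_3(t)$, and your first-order condition $g'(\cos\theta)=0$ yielding $\cot\zeta=3(3-4\sin^2\theta)$ is in fact the same equation the paper extracts geometrically from $q\cdot\dot\gamma_3(0)=0$ (via \Cref{prop:convex}). The divergence is in how the two inclusions are handled. The paper's proof runs through \Cref{lem:max}, a full classification of the global maxima of $P_{\zeta,\theta}(t)=\cos\zeta\cos t+\sin\zeta\cos(3t-\theta)$ (maxima confined to $[-\tfrac{\pi}{3},\tfrac{\pi}{3}]$, two maxima exactly when $\theta=\pm\pi$ and $\zeta\in(\zeta_0,\tfrac{\pi}{2})$), which directly gives the necessity direction $\partial F_3(0)\subseteq\{q(\theta,3\theta+\pi,\zeta(\theta))\}$; your cubic substitution $v=\cos u$ instead gives a clean, direct proof of the sufficiency direction (each curve point is equidistant to $\gamma_3(0)$ and $\gamma_3(2\theta)$ and no closer to anything else), and you replace the necessity analysis by a topological argument: the parametrized set is a Jordan curve inside $\partial F_3(0)$, which is itself a Jordan curve, so they coincide. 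What your route buys is that it makes the $\supseteq$ inclusion explicit (the paper's argument is terser on this point) and avoids the full maxima classification; what it costs is the extra convexity/topology book-keeping you already flag: you must verify that $F_3(0)$ has nonempty relative interior in $\Sp^2_0$ and lies in an open hemisphere (e.g.\ because $q\cdot\gamma_3(0)>0$ on $F_3(0)$, since $\gamma_3$ has mean zero), so that via gnomonic projection its relative boundary is genuinely a Jordan curve, plus injectivity of $\Phi$ and the endpoint identification at $(0,0,1,0)$, and the standard fact that a point with two nearest sites lies on the relative boundary of its cell.

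One small correction to your sketch of the maximality check: comparing $g(v^*)$ with $g(1)$ does reduce to $\cot\zeta<9$ (it gives $v^*<1$ and monotonicity does the rest), but the comparison with $g(-1)$ does not; writing $c=\cot\zeta$, it is the inequality $\tfrac{(c+3)^{3/2}}{3\sqrt{3}}\geq 1-c$ for $c\in[0,9)$, which holds with equality only at $c=0$ (i.e.\ $\zeta=\tfrac{\pi}{2}$, exactly where the third maximum appears). This is elementary and easily inserted, so it is a fixable slip rather than a gap, but as written your argument does not rule out the maximum migrating to $v=-1$.
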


See \Cref{fig:fiber-3} for a visualization of $F_3(0)$. 

\begin{figure}
\centering
\includegraphics[width = \linewidth]{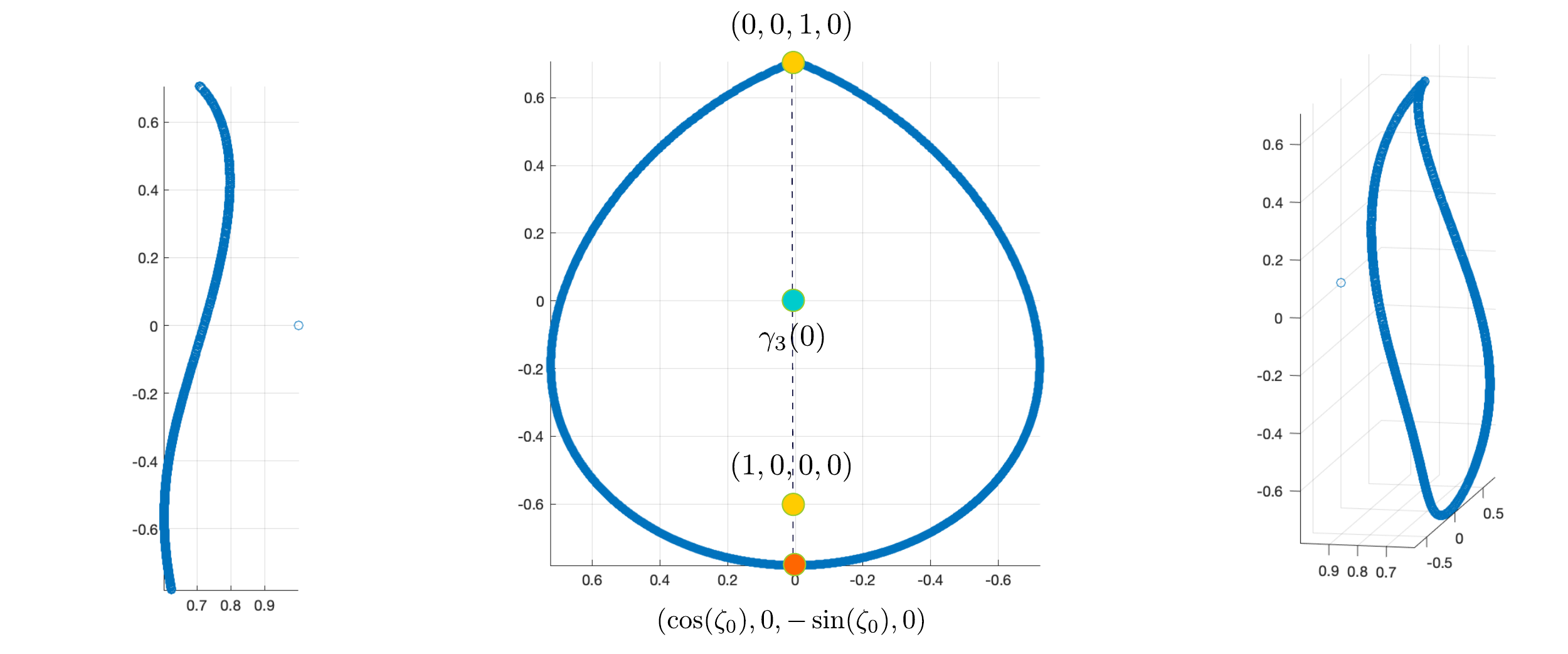}
\caption{Three views of the boundary of the fiber $F_3(0)$; see \Cref{thm:fiber-3}. The center figure shows the the points $\Sp^1_{3,4}\ni(0,0,1,0) = \bar{q}(\pm\tfrac{\pi}{3})$,    $(\cos(\zeta_0),0,-\sin(\zeta_0),0) = \bar{q}(0)$ (where $\zeta_0:=\arccot(9)$) both lying on $\partial F_3(0)$ together with  the points $\gamma_3(0) = \tfrac{1}{\sqrt{2}}(1,0,1,0)$ and $\Sp^1_{1,2}\ni (1,0,0,0)$ both lying in the interior of $F_3(0)$; see \Cref{fig:hopf-s3}.}
\label{fig:fiber-3}
\end{figure}

\begin{remark}\label{rem:boundary-3}
As a consequence of \Cref{thm:fiber-3}, a generic point on  $\partial F_3(0)$ has the following Hopf coordinates:
\begin{align*}\bar{q}(\theta) &:= q(\theta,3\theta+\pi,\zeta(\theta))\\ &=\big(\cos(\theta)\cos(\zeta(\theta)),\sin(\theta)\cos(\zeta(\theta)),\cos(3\theta+\pi)\sin(\zeta(\theta)),\sin(3\theta+\pi)\sin(\zeta(\theta))\big)\\
&=\big(\cos(\theta)\cos(\zeta(\theta)),\sin(\theta)\cos(\zeta(\theta)),-\cos(3\theta)\sin(\zeta(\theta)),-\sin(3\theta)\sin(\zeta(\theta))\big)
\end{align*}
for $\theta\in[-\tfrac{\pi}{3},\tfrac{\pi}{3}).$
\end{remark}

\begin{remark}\label{rem:cov-3-tighter}
Using the parametrization of $\partial F_3(0)$ given by \Cref{thm:fiber-3} we can consider the function
 $$[-\tfrac{\pi}{3},\tfrac{\pi}{3}]\ni \theta\mapsto \rho_3(\theta):=d_3(\bar{q}(\theta), \gamma_3(0))=\arccos\left(\bar{q}(\theta)\cdot\gamma_3(0)\right)$$ and find its maximum value. A plot of $\rho_3(\theta)$ for $\theta\in[-\tfrac{\pi}{3},\tfrac{\pi}{3})$ is shown in \Cref{fig:d3}. Numerically, we determined that the maximum value of $\rho_3$ is $\approx 0.9232$, which is strictly smaller than the upper bound $\tfrac{\pi}{3}$ implied by \Cref{thm:cov-3}.  The minimum value is  $\approx 0.6476$.
\end{remark}

\begin{figure}
    \centering
    \includegraphics[width=0.45\linewidth]{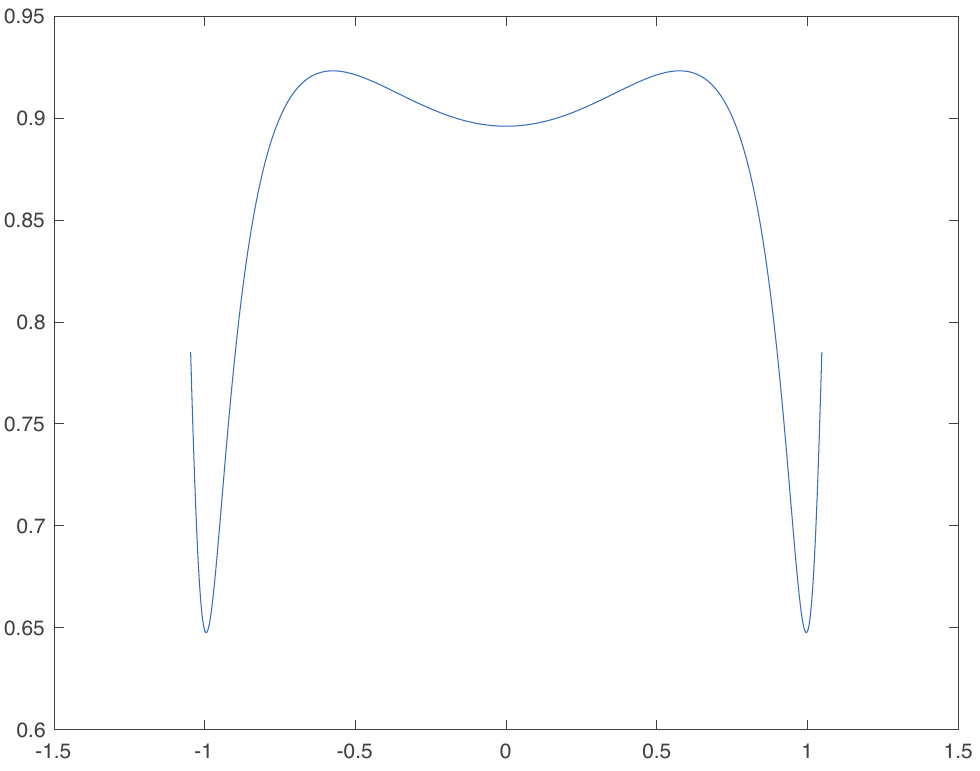}
    \includegraphics[width=0.45\linewidth]{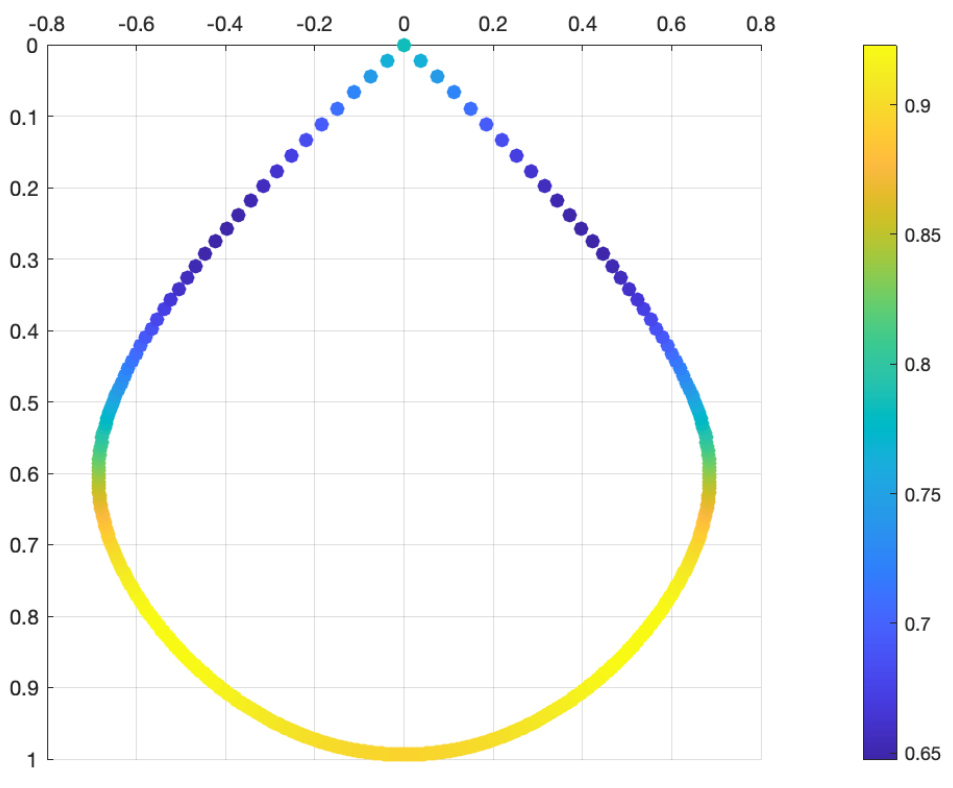}
    \caption{\textbf{Left:} Plot of  $\rho_3(\theta): = d_3(\bar{q}(\theta),\gamma_3(0))$ for $\theta\in[-\tfrac{\pi}{3},\tfrac{\pi}{3})$; see \Cref{rem:cov-3-tighter}. The maximum value of $\rho_3$ is approximately $0.9232$ whereas the minimum value is  $\approx 0.6476$. \textbf{Right:} Plot of  $\partial F_3(0)$ colored by values of $\rho_3$.}
    \label{fig:d3}
\end{figure}

\begin{lemma}\label{lem:max}
Let $\zeta\in[0,\tfrac{\pi}{2}]$ and $\theta\in[-\pi,\pi)$. Consider the trigonometric function
$$P_{\zeta,\theta}:[-\pi,\pi)\to \R$$ defined by $$P_{\zeta,\theta}(t):=\cos(\zeta)\cos(t) + \sin(\zeta) \cos(3t-\theta)$$ 
and let $M(P_{\zeta,\theta})$ denote the  set of all global maxima points of $P_{\zeta,\theta}$. Then, 
\begin{itemize}
\item $M(P_{\zeta,\theta})\subset [-\tfrac{\pi}{3},\tfrac{\pi}{3}]$ whenever $\zeta\in[0,\tfrac{\pi}{2})$.
\item $|M(P_{\zeta,\theta})|= 2$  precisely when  $\theta=\pm \pi$ and $\zeta\in(\zeta_0,\tfrac{\pi}{2})$, where $\zeta_0:=\arccot(9)$. 
\item $|M(P_{\zeta,\theta})|=3$ precisely when $\zeta = \tfrac{\pi}{2}$.
\item $|M(P_{\zeta,\theta})|=1$ in all other cases.
\end{itemize}
\end{lemma}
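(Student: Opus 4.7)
The plan is to establish the four bullets in the order (1), (4), (2), (3). For bullet (1), I exploit the shift identity
\[
P_{\zeta,\theta}\bigl(t\pm\tfrac{2\pi}{3}\bigr) - P_{\zeta,\theta}(t) = \cos\zeta\bigl[\cos\bigl(t\pm\tfrac{2\pi}{3}\bigr)-\cos t\bigr],
\]
which holds because $\sin\zeta\cos(3t-\theta)$ is $\tfrac{2\pi}{3}$-periodic. If $t^*$ is a global max and $\zeta<\tfrac{\pi}{2}$, then $\cos\zeta>0$ forces $\cos(t^*\pm\tfrac{2\pi}{3})\le\cos t^*$; using the identity $\cos(t\pm\tfrac{2\pi}{3})-\cos t = \mp\sqrt{3}\sin(t\pm\tfrac{\pi}{3})$, this pair of inequalities is equivalent to $t^*\in[-\tfrac{\pi}{3},\tfrac{\pi}{3}]$. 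Bullet (4) is immediate: $P_{\pi/2,\theta}(t)=\cos(3t-\theta)$ is a pure cosine of frequency three, whose global maxes on $\Sp^1$ are exactly $t=\tfrac{\theta}{3}+\tfrac{2\pi k}{3}$ for $k\in\{-1,0,1\}$.

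The main case is $\zeta\in(0,\tfrac{\pi}{2})$ (the case $\zeta=0$ is trivial since $P=\cos t$). Suppose $t_1\neq t_2$ are two global maxes in $[-\tfrac{\pi}{3},\tfrac{\pi}{3}]$; both are critical points. Set $a=(t_1+t_2)/2$ and $b=(t_1-t_2)/2$, WLOG $b\in(0,\tfrac{\pi}{3}]$. Taking the sum and the difference of $P'(t_1)=0$ and $P'(t_2)=0$, together with the equation $P(t_1)=P(t_2)$, and applying sum-to-product formulas, yields
\begin{align*}
\cos\zeta\,\sin a\cos b + 3\sin\zeta\,\sin(3a-\theta)\cos(3b) &= 0,\\
\cos\zeta\,\cos a\sin b + 3\sin\zeta\,\cos(3a-\theta)\sin(3b) &= 0,\\
\cos\zeta\,\sin a\sin b + \sin\zeta\,\sin(3a-\theta)\sin(3b) &= 0.
\end{align*}
Eliminating $\sin\zeta\sin(3a-\theta)$ between the first and third relations produces $\cos\zeta\,\sin a\,[\cos b\sin(3b)-3\sin b\cos(3b)]=0$. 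By triple-angle expansion, the bracket equals $8\sin^3 b\cos b$, which is nonzero for $b\in(0,\tfrac{\pi}{3}]$; thus $\sin a=0$, so $a=0$ and $t_2=-t_1$.

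Substituting $a=0$ back, the first and third relations collapse to $\sin\zeta\sin\theta\cos(3b)=\sin\zeta\sin\theta\sin(3b)=0$, which forces $\sin\theta=0$. The $\theta=0$ branch turns the second relation into $\cos\zeta\sin b + 3\sin\zeta\sin(3b)$, strictly positive on $b\in(0,\tfrac{\pi}{3}]$---contradiction. The $\theta=\pm\pi$ branch reduces the second relation to $\cot\zeta=3(3-4\sin^2 b)$, whose right-hand side sweeps $(0,9)$ bijectively as $b$ varies over $(0,\tfrac{\pi}{3})$; this produces a unique pair $\pm b$ of global maxes precisely when $\zeta\in(\zeta_0,\tfrac{\pi}{2})$, proving bullet (2). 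Bullet (3) then follows because three distinct global maxes would require $t_i=-t_j$ for every pair, which is impossible. The main obstacle is the algebraic elimination above: cleanly deriving the three trigonometric relations, executing the elimination between them, and recognizing the identity $\cos b\sin(3b)-3\sin b\cos(3b)=8\sin^3 b\cos b$ that rules out all non-symmetric pairs of maxes. Beyond that, the argument is a careful case analysis on $\theta$.
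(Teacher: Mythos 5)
Your localization step is the same as the paper's: the paper also compares $P_{\zeta,\theta}(t)$ with $P_{\zeta,\theta}(t\mp\tfrac{2\pi}{3})$, using the $\tfrac{2\pi}{3}$-periodicity of the second summand, to confine all global maxima to $[-\tfrac{\pi}{3},\tfrac{\pi}{3}]$ when $\zeta<\tfrac{\pi}{2}$. Your treatment of multiplicity is genuinely different and, for the necessity direction, cleaner: the paper argues somewhat qualitatively that two global maxima force $\theta=\pm\pi$ and then studies the critical points of $P_{\zeta,\pm\pi}$ explicitly, whereas your elimination from the three relations (sum and difference of $P'(t_1)=P'(t_2)=0$ plus $P(t_1)=P(t_2)$, together with the identity $\cos b\sin(3b)-3\sin b\cos(3b)=8\sin^3b\cos b$) rigorously forces any pair of distinct maxima to be symmetric, then forces $\sin\theta=0$, kills the $\theta=0$ branch, and pins down $\cot\zeta=3(3-4\sin^2b)$. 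This correctly yields: at most one maximum unless $\theta=\pm\pi$ and $\zeta\in(\zeta_0,\tfrac{\pi}{2})$, never three maxima for $\zeta<\tfrac{\pi}{2}$, and exactly three for $\zeta=\tfrac{\pi}{2}$.

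There is, however, a genuine gap in the sufficiency half of the second bullet. Solvability of your necessary conditions only shows that, for $\theta=\pm\pi$ and $\zeta\in(\zeta_0,\tfrac{\pi}{2})$, there exists a symmetric pair $\pm b$ of critical points with equal values; it does not show that these points are the \emph{global} maxima. A priori the maximum could be attained uniquely at the third critical point $t=0$ of the even function $P_{\zeta,\pm\pi}(t)=\cos\zeta\cos t-\sin\zeta\cos(3t)$, which would give $|M(P_{\zeta,\pm\pi})|=1$ on that whole parameter range and break the ``precisely when'' of the statement (and hence also the exact count in the last bullet). The paper closes exactly this point by computing $P_{\zeta,\pm\pi}''(0)=-\cos\zeta+9\sin\zeta$, which is positive iff $\cot\zeta<9$, i.e.\ iff $\zeta>\zeta_0$; so $t=0$ is then a strict local minimum and cannot carry the maximum, and since by your first bullet the maximum lies in $[-\tfrac{\pi}{3},\tfrac{\pi}{3}]$, where the only critical points are $0$ and $\pm b(\zeta)$, evenness gives $M=\{\pm b(\zeta)\}$ and $|M|=2$. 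Adding this one second-derivative computation (or a direct comparison of $P(0)$ with $P(b)$) completes your argument; everything else checks out.
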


\begin{figure}[h]
\centering\includegraphics[width=0.45\textwidth]{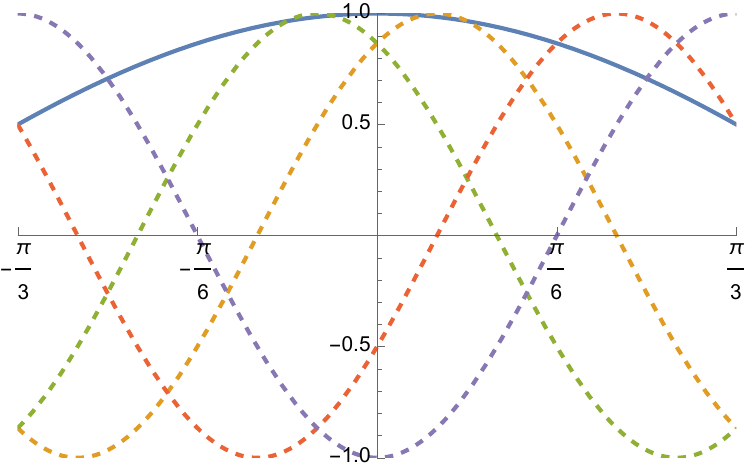}
\includegraphics[width=0.45\textwidth]{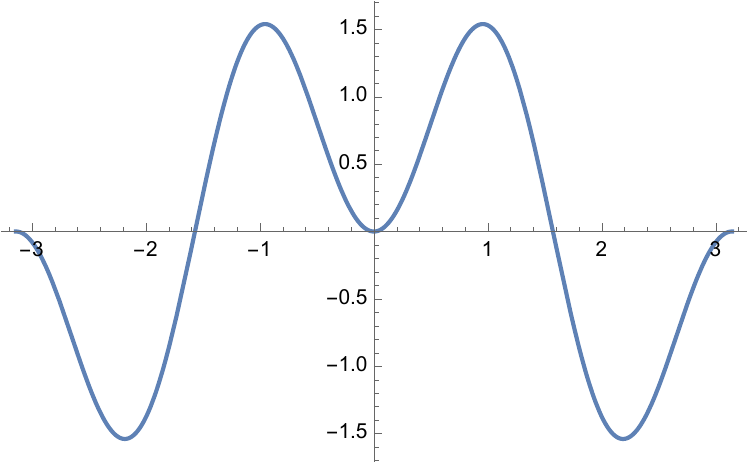}
\caption{See the proof of \Cref{lem:max}. \textbf{Left:} The solid line is the plot of $\cos(t)$ for $t\in[-\tfrac{\pi}{3},\tfrac{\pi}{3}].$ Dashed lines correspond to plots of $\cos(3t-\theta)$ for different choices of $\theta$. \textbf{Right:} Plot of $\sqrt{2} \, P_{\zeta,\pm\pi}(t)=\cos(t)-\cos(3t)$, $t\in[-\pi,\pi]$, for $\zeta=\tfrac{\pi}{4}$.}
\label{fig:cos}
\end{figure}
\begin{proof} We first establish the following.
\begin{claim}\label{claim:maxima} When $\zeta\in[0,\tfrac{\pi}{2})$, all global maxima points of $P_{\zeta,\theta}$ are attained inside of the interval $I_0 := [-\tfrac{\pi}{3},\tfrac{\pi}{3}].$ 
\end{claim}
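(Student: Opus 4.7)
The plan is to exploit the $\tfrac{2\pi}{3}$-periodicity of the second summand $\sin(\zeta)\cos(3t-\theta)$: translating the variable $t$ by $\pm\tfrac{2\pi}{3}$ leaves this term invariant, while the first summand $\cos(\zeta)\cos(t)$ strictly increases whenever we translate a point outside $I_0 = [-\tfrac{\pi}{3}, \tfrac{\pi}{3}]$ back into $I_0$. Once this is established, no point outside $I_0$ can be a global maximum.

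Concretely, given $t \in (\tfrac{\pi}{3}, \pi]$, set $t' := t - \tfrac{2\pi}{3} \in (-\tfrac{\pi}{3}, \tfrac{\pi}{3}]$; and given $t \in [-\pi, -\tfrac{\pi}{3})$, set $t' := t + \tfrac{2\pi}{3} \in [-\tfrac{\pi}{3}, \tfrac{\pi}{3})$. Since $3t' - \theta = 3t - \theta \mp 2\pi$, we have $\cos(3t' - \theta) = \cos(3t - \theta)$ in both cases, so
$$P_{\zeta,\theta}(t') - P_{\zeta,\theta}(t) = \cos(\zeta)\bigl(\cos(t') - \cos(t)\bigr).$$
For $\zeta \in [0,\tfrac{\pi}{2})$ one has $\cos(\zeta) > 0$, so it suffices to verify that $\cos(t') > \cos(t)$ whenever $|t| > \tfrac{\pi}{3}$. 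This is an elementary one-variable comparison: using the evenness of $\cos$ and its strict monotonicity on $[0,\pi]$, one checks that $|t'| < |t|$ whenever $|t| \in (\tfrac{\pi}{3}, \pi]$, so $\cos(t') = \cos(|t'|) > \cos(|t|) = \cos(t)$.

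The degenerate case $\zeta = 0$ is immediate since then $P_{0,\theta}(t) = \cos(t)$, which is maximized only at $t = 0 \in I_0$. Because $I_0$ is closed, the strict inequality at all interior points of its complement shows that every global maximum of $P_{\zeta,\theta}$ must lie in $I_0$. I do not expect any serious obstacle here; the only care needed is bookkeeping to confirm strictness of the cosine comparison across the subintervals $(\tfrac{\pi}{3}, \tfrac{2\pi}{3}]$ and $(\tfrac{2\pi}{3}, \pi]$ (and their reflections), and to note that the endpoints $t = \pm\tfrac{\pi}{3}$ already lie in $I_0$ so need no improvement.
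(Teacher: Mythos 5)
Your proposal is correct and follows essentially the same route as the paper: both arguments shift any $t$ outside $I_0$ by $\mp\tfrac{2\pi}{3}$ into $I_0$, use the invariance of the $\sin(\zeta)\cos(3t-\theta)$ term under that shift, and conclude via the strict increase of the $\cos(\zeta)\cos(t)$ term (which the paper records as the inequalities on $g$, and you obtain from $|t'|<|t|$ and monotonicity of cosine on $[0,\pi]$). The only cosmetic difference is that your separate treatment of $\zeta=0$ is unnecessary, since $\cos(\zeta)>0$ already covers it.
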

To prove this claim, note that $[-\pi,\pi) = I_-\cup I_0\cup I_+$ where $I_+ = (\tfrac{\pi}{3},\pi)$ and $I_- = [-\pi,-\tfrac{\pi}{3})$. Let $g(t) := \cos(\zeta)\cos(t)$ and $h(t):=\sin(\zeta)\cos(3t - \theta)$. Note that:
\begin{align}
g(s)&> g(s-\tfrac{2\pi}{3}) &\text{for all $s\in [-\tfrac{\pi}{3},\tfrac{\pi}{3})$}\label{eq:*}\\
g(s)&> g(s+\tfrac{2\pi}{3}) & \text{for all $s\in (-\tfrac{\pi}{3},\tfrac{\pi}{3}]$}\label{eq:**}.
\end{align}

See \Cref{fig:cos}. The claim follows from the following two items:
\begin{itemize}
\item If $t\in I_+$, then $s:= t- \tfrac{2\pi}{3} \in (-\tfrac{\pi}{3},\tfrac{\pi}{3}) \subset I_0$. Thus
 $$P_{\zeta,\theta}(t) = g(t)+h(t) = g(t) + h(t-\tfrac{2\pi}{3}) = g(s+\tfrac{2\pi}{3}) + h(s) < P_{\zeta,\theta}(s) =  P_{\zeta,\theta}(t-\tfrac{2\pi}{3}),$$
where the last inequality is due to \Cref{eq:**}.
\item If $t\in I_-$, then $s:= t+ \tfrac{2\pi}{3} \in [-\tfrac{\pi}{3},\tfrac{\pi}{3})\subset I_0$. Thus
$$P_{\zeta,\theta}(t) = g(t)+h(t) = g(t) + h(t+\tfrac{2\pi}{3}) = g(s-\tfrac{2\pi}{3}) + h(s) < P_{\zeta,\theta}(s) =  P_{\zeta,\theta}(t+\tfrac{2\pi}{3}),$$
where the last inequality is due to \Cref{eq:*}.

\end{itemize}
Assuming $\zeta \in [0,\tfrac{\pi}{2})$, given \Cref{claim:maxima} above, in order for $P_{\zeta,\theta}$ to have more than one global maximum inside of $I_0$, the function $h$ must have at least two local maxima or two local minima in $I_0$. But this requires $\theta = \pm \pi.$  Under this assumption, $P_{\zeta,\pm\pi}(t) = \cos(\zeta)\cos(t) - \sin(\zeta)\cos(3t)$ so that $P_{\zeta,\pm \pi}'(t) = -\cos(\zeta)\sin(t)+3\cos(\zeta)\sin(3t)$. See \Cref{fig:cos}.  Since $ \sin(3t) = \sin(t)(3-4\sin^2(t))$, the critical points inside $I_0$ are therefore $t=0$ together with $t_{\pm}(\zeta)$, the solutions of the equation 
\begin{equation}\label{eq:pm}4\sin^2(t) = 3 - \frac{\cot(\zeta)}{3}.\end{equation} 
Since $P''_{\zeta,\pi}(0) = -\cos(\zeta) + 9\sin(\zeta)$, $t=0$ will correspond to a local minimum whenever $\cot(\zeta) <9$ and it will correspond to a global maximum when $\cot(\zeta)\geq 9$.  For \Cref{eq:pm}   to have two different solutions $t_\pm(\zeta)$ inside the interval  $I_0$ it is necessary and sufficient that $\cot(\zeta) <9$. Whenever this condition holds, $t_\pm(\zeta)$ will be two (different) global maxima points.  Finally, when $\zeta=\zeta_0 = \arccot(9)$, $t=0$ will be the sole global maximum in $I_0$ and $P_{\zeta_0,\pm\pi}'$ will have a triple root at $t=0$. 

\medskip
\noindent
It remains to analyze the case $\zeta = \tfrac{\pi}{2}.$ In this case, $P_{\tfrac{\pi}{2},\theta}(t) = \cos(3t-\theta)$, which certainly has  exactly three different global maxima points.
\end{proof}

\begin{proof}[Proof of \Cref{thm:fiber-3}]
Any point $q(\theta_1,\theta_2,\zeta)$ lying on the boundary of $F_3(0)$ will have more than one closest point in $\gamma_3$.\footnote{That, is there will be at least one point $\gamma_3$ different from $\gamma_0$ that is closest to $q(\theta_1,\theta_2,\zeta)$.} This means that the function $P_3:[-\pi,\pi)\to\R$ defined as
$$P_3(t):=q(\theta_1,\theta_2,\zeta)\cdot \gamma_3(t)=\cos(t-\theta_1)\frac{\cos(\zeta)}{\sqrt{2}} + \cos(3t-\theta_2)\frac{\sin(\zeta)}{\sqrt{2}}$$ will have at least one global maximum, in addition to $t=0$.  

With the notation of  \Cref{lem:max}, $$P_3(t) = \frac{1}{\sqrt{2}}\,P_{\zeta,\theta}(t-\theta_1)\,\,\text{for}\,\,\theta := \theta_2-3\theta_1.$$ Taking into account the considerations above, by \Cref{lem:max}, we must have that $\zeta>\zeta_0$,  $\theta_2 = 3\theta_1\pm \pi$ and that, for any such global maximum,
$t-\theta_1\in[-\tfrac{\pi}{3},\tfrac{\pi}{3}].$ This has to be the case for $t=0$ which gives the following relationship between $\theta_1$ and $\theta_2$:
\begin{equation}\label{eq:3-to-1}\text{ $\theta_2= 3\theta_1 \pm \pi$ for $\theta_1\in[-\tfrac{\pi}{3},\tfrac{\pi}{3}]$.}\end{equation}

On the other hand, note that by \Cref{prop:convex} we must have  
$$F_3(0)\subset \{v\in \R^4|\,v\cdot \dot{\gamma}_3(0)=0\}=\Sigma_0.$$
Since $\dot{\gamma}_3(0)=\frac{1}{\sqrt{2}}\big(0,1,0,3\big),$ every point $(x,y,z,w)$ in the hyperplane $\Sigma_0$ satisfies $y+3w=0$. For a point $q(\theta_1,\theta_2,\zeta)$ in $\Sp^3\cap \Sigma_0$, expressed in Hopf coordinates, this gives the condition 
\begin{equation}\label{eq:plane-hopf-coords}\sin(\theta_1)\cos(\zeta)+3\sin(\theta_2)\sin(\zeta)=0.\end{equation}
By \Cref{eq:3-to-1}, if $q(\theta_1,\theta_2,\zeta)$ is in $\partial F_3(0)$, we have that $\theta_2=3\theta_1\pm\pi$ so that   \Cref{eq:plane-hopf-coords}  becomes
$$\sin(\theta_1)\cos(\zeta) = 3\sin(3\theta_1)\sin(\zeta).$$
Via the formula $\sin(3\alpha) = \sin(\alpha)\big(3-4\sin^2(\alpha)\big)$ we obtain from the above condition that 
$$\cos(\zeta) = 3\big(3-4\sin^2(\theta_1)\big)\sin(\zeta)$$ from which the first claim follows. The second claim follows from \Cref{prop:convex}. 
\end{proof}

\subsection{The modulus of discontinuity of $\psi_3$ is minimal}\label{sec:mod-disc-3-min}
By combining  \Cref{coro:disc-voro} with \Cref{thm:fiber-3} and \Cref{thm:disc-lower-bound} we have that the modulus of discontinuity of $\psi_3$ is minimal.
\begin{theorem}\label{thm:disc-3}
    $\disc(\psi_3)=\tfrac{2\pi}{3}.$
\end{theorem}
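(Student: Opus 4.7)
The plan is to establish matching lower and upper bounds.

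For the lower bound, item 3 of \Cref{rem:antipodal} asserts that $\psi_3:\Sp^3\to\Sp^1$ is antipodal. Specializing \Cref{thm:disc-lower-bound} to $k=1$ therefore gives $\disc(\psi_3)\geq \delta_1 = \tfrac{2\pi}{3}$.

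For the upper bound, \Cref{coro:disc-voro} reduces matters to showing
\[
  \max\{t\in[0,\pi]\,:\,\partial F_3(0)\cap \partial F_3(t)\neq\emptyset\}\;\leq\; \tfrac{2\pi}{3}.
\]
By the parametrization $\partial F_3(0)=\{\bar q(\theta):\theta\in[-\tfrac{\pi}{3},\tfrac{\pi}{3})\}$ from \Cref{thm:fiber-3}, this is equivalent to showing that for every such $\theta$, every global maximum of the function $t\mapsto P_3(t;\bar q(\theta)):=\bar q(\theta)\cdot \gamma_3(t)$ lies in $[-\tfrac{2\pi}{3},\tfrac{2\pi}{3}]$, since $\bar q(\theta)\in \partial F_3(t)$ exactly when $t$ is one of a collection of at least two such global maxima.

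The computation follows the argument in the proof of \Cref{thm:fiber-3}: in the notation of \Cref{lem:max}, one has $P_3(t;\bar q(\theta))=\tfrac{1}{\sqrt{2}}P_{\zeta(\theta),\pm\pi}(t-\theta)$. For $\theta\in(-\tfrac{\pi}{3},\tfrac{\pi}{3})\setminus\{0\}$ one has $\zeta(\theta)\in(\zeta_0,\tfrac{\pi}{2})$, and by \Cref{lem:max} the function $P_{\zeta(\theta),\pm\pi}$ has exactly two global maxima in $I_0=[-\tfrac{\pi}{3},\tfrac{\pi}{3}]$, characterized by $4\sin^2(t) = 3-\tfrac{\cot(\zeta)}{3}$. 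Substituting the defining relation $\cot(\zeta(\theta))=3(3-4\sin^2(\theta))$ from \Cref{eq:def-eta-theta} pins them at $t=\pm\theta$, so the global maxima of $P_3(\cdot;\bar q(\theta))$ are $\{0,2\theta\}$, and $|2\theta|<\tfrac{2\pi}{3}$. At the endpoints $\theta=\pm\tfrac{\pi}{3}$ one has $\zeta=\tfrac{\pi}{2}$ and $P_{\pi/2,\pm\pi}(t)=-\cos(3t)$ has three global maxima, giving $\bar q(\pm\tfrac{\pi}{3})=(0,0,1,0)\in\partial F_3(0)\cap\partial F_3(\tfrac{2\pi}{3})\cap\partial F_3(-\tfrac{2\pi}{3})$; the bound $\tfrac{2\pi}{3}$ is thus attained exactly.

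The one subtlety I would need to address carefully is the degenerate value $\theta=0$: here $\zeta(0)=\zeta_0=\arccot 9$, and a direct derivative computation shows that $P_{\zeta_0,\pm\pi}$ has a single global maximum at $t=0$, of order four (its second and third derivatives vanish there). Thus $\bar q(0)$ is a pinch point which formally belongs only to $F_3(0)$ and contributes nothing to the intersection in \Cref{coro:disc-voro}. This does not alter the bound, but it does mean the ``each boundary point is on exactly one other fiber'' picture requires a small case-split at $\theta=0$ and at $\theta=\pm\tfrac{\pi}{3}$. Combining with the lower bound yields $\disc(\psi_3)=\tfrac{2\pi}{3}$.
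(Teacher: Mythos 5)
Your proof is correct, and it shares the paper's overall skeleton — the lower bound comes from \Cref{thm:disc-lower-bound} via the antipodality of $\psi_3$ (\Cref{rem:antipodal}), and the upper bound from \Cref{coro:disc-voro} together with the explicit parametrization of $\partial F_3(0)$ in \Cref{thm:fiber-3} — but the key intersection analysis is carried out by a genuinely different mechanism. The paper writes a putative intersection point as $q_0=T_t(q_0')$ with $q_0,q_0'\in\partial F_3(0)$ and uses the equivariance facts (\Cref{rem:rot}, \Cref{rem:boundary-3}, \Cref{coro:preserv-tori}) to match Hopf coordinates: the condition $\zeta(\theta_0)=\zeta(\theta_0')$ forces $\theta_0=-\theta_0'$, hence $\theta_0'=-t/2$, and the constraint $\theta_0'\in[-\tfrac{\pi}{3},\tfrac{\pi}{3})$ gives $t\leq\tfrac{2\pi}{3}$. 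You instead fix a boundary point $\bar q(\theta)$ and locate \emph{all} global maxima of $t\mapsto \bar q(\theta)\cdot\gamma_3(t)$ by combining the critical-point equation \Cref{eq:pm} with the defining relation \Cref{eq:def-eta-theta}, pinning the maxima at $\{0,2\theta\}$; this yields the same bound and, in addition, the finer picture that $\partial F_3(0)\cap\partial F_3(t)=\{\bar q(t/2)\}$ for $0<t\leq\tfrac{2\pi}{3}$, the triple intersection at $\bar q(\pm\tfrac{\pi}{3})$, and the degenerate quartic maximum at $\bar q(0)$, which you correctly discard as irrelevant to \Cref{coro:disc-voro}. The trade-off: the paper's coordinate-matching argument is shorter and uses only statement-level symmetry facts, and its use of $T_t$ is the part that conceptually scales to general $k$ (where \Cref{thm:disc-gen-k} is proved by yet another route); your argument leans on quantitative content that lives inside the \emph{proof} of \Cref{lem:max} (the equation $4\sin^2(t)=3-\tfrac{\cot(\zeta)}{3}$ appears there rather than in the lemma's statement), so you should cite that step explicitly, but in exchange you recover essentially the full content of the paper's \Cref{fig:rot-fibers}. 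Both arguments, as you note, get attainment of the value $\tfrac{2\pi}{3}$ for free from the lower bound.
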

Indeed, this theorem is  proved by directly exploiting the precise description of  $\partial F_3(0)$ given by \Cref{thm:fiber-3}. Since, by \Cref{prop:moddis}, distortion is always lower bounded by modulus of discontinuity, \Cref{thm:disc-3} can be seen as a preamble to  and a consequence of  \Cref{thm:dis-3} below. We however include a standalone proof here because this proof contains interesting ideas related to the effect of the rotation $T_t$ on $\partial F_3(0)$.  This theorem will be generalized, via completely different arguments, in \Cref{thm:disc-gen-k}. The arguments therein  exploit a connection between the Voronoi cells induced by $\gamma_{2k+1}$ and known results about the facial structure of the Barvinok-Novik polytope. 
\begin{proof}[Proof of \Cref{thm:disc-3}]
By \Cref{thm:disc-lower-bound}, it suffices to prove that $\disc(\psi_3)\leq \tfrac{2\pi}{3}$. 

Assume that $q_0\in \partial F(0) \cap \partial F(t)$ for some $t\in[0,\pi]$. Then, we have: 
\begin{itemize}
\item Since, by \Cref{rem:rot}, $\partial F(t)=T_t(\partial F(0))$, there exists $q_0'\in \partial F(0)$ such that $q_0 = T_t(q_0').$
\item By \Cref{rem:boundary-3}, there exist $\theta_0,\theta_0'\in [-\tfrac{\pi}{3},\tfrac{\pi}{3})$ such that $$\mbox{$q_0 = q(\theta_0,3\theta_0+\pi,\zeta(\theta_0)$ and $q_0' = q(\theta_0',3\theta_0'+\pi,\zeta(\theta_0'))$}.$$
\item By \Cref{coro:preserv-tori} we have $$q_0 =  T_t(q'_0) = T_t (q(\theta_0',3\theta_0'+\pi,\zeta(\theta_0'))) = q(\theta_0'+t,3\theta_0'+\pi+3t,\zeta(\theta_0')).$$ Hence, through the equality
$$q(\theta_0,3\theta_0+\pi,\zeta(\theta_0)=q_0 = q(\theta_0'+t,3\theta_0'+\pi+3t,\zeta(\theta_0'))$$
we see that the following two conditions must hold: $\theta_0 = \theta_0'+t$ and $\zeta(\theta_0) = \zeta(\theta_0').$ Via the explicit expression for $\zeta(\cdot)$ given in \Cref{eq:def-eta-theta}, we see that the second condition implies that it must then hold that $\theta_0 = \pm \theta_{0}'$. The case $\theta_0=\theta_0'$ leads to $t=0$ via the first condition. The  case $\theta_0 = -\theta_0'$ gives $\theta_0' = -\tfrac{t}{2}$.
\item Finally, the condition that $\theta_0' \in [-\tfrac{\pi}{3},\tfrac{\pi}{3})$ yields that it must be that $t\leq \tfrac{2\pi}{3}$. In other words, $q_0 = \bar{q}(\tfrac{t}{2})$ and $q_0' = \bar{q}(-\tfrac{t}{2})$; see \Cref{fig:rot-fibers}.
\end{itemize}
\end{proof}

\begin{figure}
    \centering
    \includegraphics[width = 0.75\linewidth]{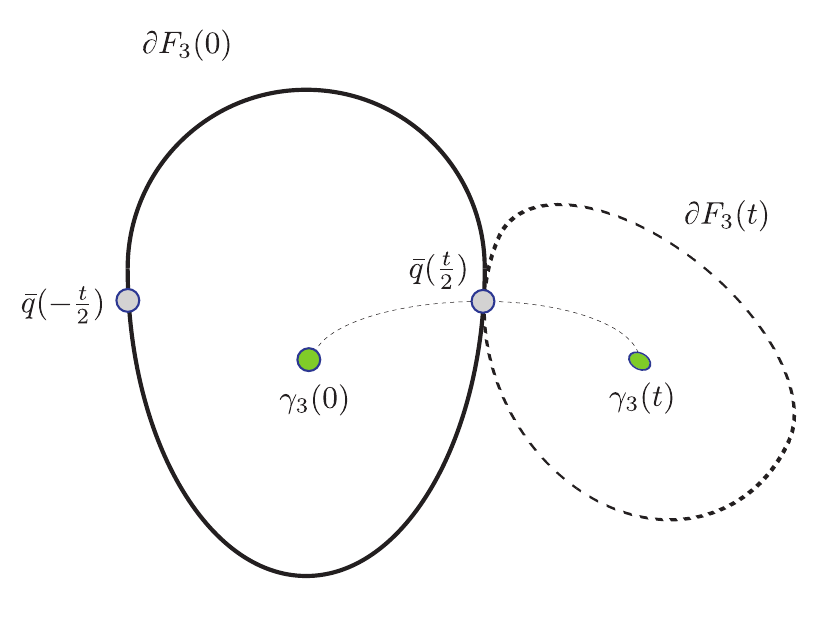}
    \caption{The point $\bar{q}(\tfrac{t}{2})$ is in the intersection $\partial F_3(0)\cap \partial F_3(t)$; see the proof of \Cref{thm:disc-3}.} 
    \label{fig:rot-fibers}
\end{figure}

\subsection{The distortion of $R_3$ is minimal \extension}\label{sec:dis-3-min}
Exploiting the characterization of $F_3(0)$ established in \Cref{thm:fiber-3} we now have the following result.

\begin{theorem}\label{thm:dis-3}
$\dis(R_3)\leq \frac{2\pi}{3}.$
\end{theorem}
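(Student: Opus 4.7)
The plan is to combine the reduction machinery of Section~4 with the explicit description of $\partial F_3(0)$ from \Cref{thm:fiber-3}, turning the statement into a concrete trigonometric inequality on a three-variable domain.

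\textbf{Step 1 (Reduction to $B^*(2\pi/3)$).} By \Cref{prop:eq-dis}, \Cref{prop:B}, and the framed observation that concludes Section~4.2, it suffices to prove $B^*(\delta_1) = B^*(2\pi/3)$: for all $q, q' \in \partial F_3(0)$ and $|t|\in[2\pi/3,\pi]$,
\[
d_3(q, T_t q')\;\geq\; |t| - \tfrac{2\pi}{3}, \quad\text{equivalently,}\quad q\cdot T_t q' \;\leq\; \cos\!\bigl(|t|-\tfrac{2\pi}{3}\bigr).
\]
The symmetry $\bar q(\theta)\cdot T_{-t}\bar q(\theta') = \bar q(\theta')\cdot T_t\bar q(\theta)$ (since $T_{-t}=T_t^{\top}$) lets us restrict to $t\in[2\pi/3,\pi]$.

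\textbf{Step 2 (Explicit inner product).} Parameterize $\partial F_3(0)$ by $\bar q(\theta)=q(\theta,3\theta+\pi,\zeta(\theta))$, $\theta\in[-\pi/3,\pi/3]$ (\Cref{rem:boundary-3}), and apply \Cref{coro:preserv-tori} to get $T_t\bar q(\theta') = q(\theta'+t,\,3\theta'+3t+\pi,\,\zeta(\theta'))$. Expanding the inner product in Hopf coordinates yields
\[
F(\theta,\theta',t):=\bar q(\theta)\cdot T_t\bar q(\theta') \;=\; \cos\zeta(\theta)\cos\zeta(\theta')\cos(\theta-\theta'-t) + \sin\zeta(\theta)\sin\zeta(\theta')\cos\bigl(3(\theta-\theta'-t)\bigr).
\]
From \Cref{eq:def-eta-theta}, $\cot\zeta(\theta)=3+6\cos(2\theta)\in[0,9]$ on $[-\pi/3,\pi/3]$, so $\cos\zeta(\theta)=(3+6\cos 2\theta)/D(\theta)$ and $\sin\zeta(\theta)=1/D(\theta)$ where $D(\theta):=\sqrt{1+(3+6\cos 2\theta)^{2}}$. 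In particular $\cos\zeta(\pm\pi/3)=0$ and $\bar q(\pm\pi/3)=(0,0,1,0)$; by \Cref{coro:preserv-tori}, $T_{2\pi/3}$ fixes $(0,0,1,0)$.

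\textbf{Step 3 (Maximization).} Reduce the proof to showing
\[
\max_{(\theta,\theta')\in[-\pi/3,\pi/3]^{2}} F(\theta,\theta',t) \;\leq\; \cos\!\bigl(t-\tfrac{2\pi}{3}\bigr), \qquad t\in[2\pi/3,\pi].
\]
I would locate the interior critical points of $F$ by solving $\partial_\theta F=\partial_{\theta'}F=0$, using the implicit identity $\zeta'(\theta)=12\sin(2\theta)\sin^{2}\zeta(\theta)$ (from differentiating $\cot\zeta=3+6\cos 2\theta$) and the symmetry $F(\theta,\theta',t)=F(-\theta',-\theta,t)$. On the boundary segments where $\theta\in\{\pm\pi/3\}$ (or $\theta'\in\{\pm\pi/3\}$), the first summand of $F$ drops out, and in the corner $\theta=\theta'=\pm\pi/3$ one has $F(\pm\pi/3,\pm\pi/3,t)=\cos(3t)$; the elementary inequality $\cos(3t)\leq\cos(t-2\pi/3)$ for $t\in[2\pi/3,\pi]$ holds with equality exactly at $t=2\pi/3$. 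This equality case recovers the tangency $F_3(0)\cap F_3(2\pi/3)=\{(0,0,1,0)\}$ identified in the proof of \Cref{thm:disc-3} and matches the lower bound $\dis(R_3)\geq 2\pi/3$ from \Cref{thm:lower-bound}.

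\textbf{Main obstacle.} Step~3 is the hard part: the two $\cos$-factors in $F$ couple $\theta-\theta'$ to $t$ at two different frequencies, while $\cos\zeta,\sin\zeta$ couple $\theta$ and $\theta'$ individually, so the critical-point equations do not decouple. One concrete route is to clear $D(\theta)D(\theta')$ and reduce the problem to a polynomial inequality in $\cos(2\theta),\cos(2\theta'),\cos t,\sin t$, then exploit the $(\theta,\theta')\leftrightarrow(-\theta',-\theta)$ symmetry to cut the search space. A potentially cleaner geometric alternative is to use the geodesic convexity of the fibers (\Cref{prop:convex}) to argue that $t\mapsto \mathrm{dist}(F_3(0),F_3(t))$ is convex in $t$ near $t=2\pi/3$, pin down its first-order behaviour at the tangency point $(0,0,1,0)$ (where the relevant ``velocity'' under $T_t$ equals $3$ in the $x_4$-direction), and extend the bound to all $t\in[2\pi/3,\pi]$ by monotonicity combined with antipodal symmetry $T_\pi=-\mathrm{id}$.
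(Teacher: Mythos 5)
Your Steps 1 and 2 coincide with the paper's argument: the proof in the text reduces, via \Cref{prop:eq-dis} and \Cref{prop:B}, to the condition $B^*(\tfrac{2\pi}{3})$ of \Cref{eq:Baster}, rewrites it as $\cos(t-\tfrac{2\pi}{3})\geq q\cdot T_t q'$ for $q,q'\in\partial F_3(0)$, and then uses \Cref{rem:boundary-3} and \Cref{coro:preserv-tori} to express $q\cdot T_t q'$ exactly as your function $F$ (the paper's $F_t(\theta,\theta')$, up to the sign convention in $t$, which your symmetry observation $T_{-t}=T_t^{\top}$ legitimately absorbs). Your algebra for $\cot\zeta(\theta)=3+6\cos 2\theta$ and the corner evaluation $F(\pm\pi/3,\pm\pi/3,t)=\cos(3t)$ are also correct and consistent with the tangency at $(0,0,1,0)$ found in the proof of \Cref{thm:disc-3}.

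The genuine gap is Step 3, which you yourself flag as the ``main obstacle'': the inequality $\max_{\theta,\theta'\in[-\pi/3,\pi/3]}F(\theta,\theta',t)\leq\cos(t-\tfrac{2\pi}{3})$ for $t\in[\tfrac{2\pi}{3},\pi]$ is precisely the content of the theorem after the reduction, and you do not prove it — you only verify one corner of the domain and list two possible strategies without executing either. The paper does not have an analytic argument here either: its stated proof is computer assisted, verifying the two elementary inequalities $(**)$ and $(***)$ numerically in Matlab \cite[Function \texttt{TestIneqR3.m}]{dgh-github}. Note also that your proposed ``cleaner geometric alternative'' needs more than you state: to conclude $B^*(\tfrac{2\pi}{3})$ you must show $\mathrm{dist}(F_3(0),F_3(t))\geq t-\tfrac{2\pi}{3}$ on all of $[\tfrac{2\pi}{3},\pi]$, so convexity and first-order behaviour of $t\mapsto\mathrm{dist}(F_3(0),F_3(t))$ at the tangency $t=\tfrac{2\pi}{3}$ (neither of which you establish) would at best give a local statement; a global lower bound on the growth of this distance function, or an argument covering the full range up to $t=\pi$, is still missing. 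As it stands, your proposal reproduces the paper's reduction but stops exactly where the paper resorts to computation, so the statement is not yet proved.
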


Via ideas described in \Cref{sec:simple-conditions} and \Cref{thm:fiber-3}, the proof is reduced to checking two inequalities involving elementary trigonometric functions which can we verify with the aid of a computer program.\footnote{We use the term ``computer assisted proof" for lack of a better one.}

\begin{proof}[Computer assisted proof \faLaptopCode]
Given the above, it is enough to verify condition $B^*(\frac{2\pi}{3})$ (from \Cref{eq:Baster}), that is:
\begin{align*}
 & &   d_1(0,t) - \tfrac{2\pi}{3} \leq d_{3}(q,T_t q')  && \forall \, q,q'\in \partial  F_{3}(0),\, |t| \in [\tfrac{2\pi}{3}, \pi].
\end{align*}
For $t\in[\tfrac{2\pi}{3},\pi]$,  this is equivalent to the condition 
\begin{align*}
 &(*) &   \cos(t - \tfrac{2\pi}{3}) \geq q \cdot T_t q'  && \forall \, q,q'\in \partial  F_{3}(0),\, |t| \in [\tfrac{2\pi}{3}, \pi].
\end{align*}

Since the RHS involves points on $q,q'\in\partial F_3(0)$, via \Cref{rem:boundary-3}, writing $q=q(\theta)$ and $q'=q(\theta')$, we see that 
$$q\cdot T_t q' = F_t(\theta,\theta'):=\cos(\theta-\theta' + t) \cos(\zeta)\cos(\zeta') + \cos(3(\theta-\theta' + t)) \sin(\zeta)\sin(\zeta')$$
where we've written $\zeta := \zeta(\theta)$ and $\zeta':=\zeta(\theta')$ for conciseness. Condition $(*)$ is therefore equivalent to the following condition involving elementary functions
\begin{align*}
 &(**) &   \cos(t - \tfrac{2\pi}{3}) \geq F_t(\theta,\theta')  && \forall \, \theta,\theta'\in [-\tfrac{\pi}{3},\tfrac{\pi}{3}],\, t \in [\tfrac{2\pi}{3}, \pi].
\end{align*}
Analogously, the case $t\in[-\pi,-\tfrac{2\pi}{3}]$ leads to the condition 
\begin{align*}
 &(***) &   \cos(t + \tfrac{2\pi}{3}) \geq F_t(\theta,\theta')  && \forall \, \theta,\theta'\in [-\tfrac{\pi}{3},\tfrac{\pi}{3}],\, t \in [-\pi,-\tfrac{2\pi}{3}].
\end{align*}
These two conditions involving elementary functions were  verified with the assistance of Matlab \cite[Function \texttt{TestIneqR3.m}]{dgh-github}.
\end{proof}

\section{The case $\Sp^1$ versus $\Sp^{2k+1}$}\label{sec:gen-k}

In this section we describe some results for the case of $\Sp^1$ versus $\Sp^{2k+1}$ which are applicable to the case $k\geq 2$.

\subsection{Connections to the Barvinok-Novik polytope}

The convex hull of the image of the TMC is nowadays  known  as the \emph{Barvinok-Novik polytope}. It is defined as
$$\mathcal{B}_{2k} := \conv(\gamma_{2k-1}(\Sp^1)).$$

In \Cref{sec:cov} below we will use results about the facial structure of this polytope to establish some results pertaining to the TMC-EPCs. 

Smilansky \cite{smilansky1990bi} studied $\mathcal{B}_4$ and Barvinok and Novik \cite{barvinok2008centrally}, and then Vinzant  \cite{vinzant2011edges} and  Barvinok-Lee-Novik \cite{barvinok2013neighborliness} studied the general case; see also \cite{barvinokbeauty}.  

Smilansky obtained the following\footnote{Smilansky's results are more general than what we state here.} characterization of the facial structure of $\mathcal{B}_4$.
\begin{theorem}[{\cite[Theorem 1]{smilansky1990bi} and \cite[Theorem 4.1]{barvinok2008centrally}}]\label{thm:smilansky} The proper faces of $\mathcal{B}_4$ are
\begin{itemize}\item[(0)] the $0$-dimensional faces (vertices) are $\gamma_{3}(t)$ for $t\in \Sp^3$.
\item[(1)] the $1$-dimensional faces (edges) are the segments
$$[\gamma_3(t_1),\gamma_3(t_2)]$$ where $t_1\neq t_2$ and $d_1(t_1,t_2)\leq \tfrac{2\pi}{3}$; and
\item[(2)] the $2$-dimensional faces of $\mathcal{B}_4$ are all the equilateral triangles 
$$\Delta_t:=\conv\big(\gamma_{3}(t),\gamma_3(t-\tfrac{2\pi}{3}),\gamma_3(t+\tfrac{2\pi}{3}))\big),\,\,\text{$t\in\Sp^1$}.$$
\end{itemize}
\end{theorem}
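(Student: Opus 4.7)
The plan is to identify the faces of $\mathcal{B}_4$ with global maximizer sets of linear functionals on the curve $\gamma_3(\Sp^1)$ and then apply \Cref{lem:max} to enumerate the possibilities. Every proper exposed face of a compact convex body arises as the intersection with a supporting hyperplane, so for each non-zero linear functional $\phi$ the corresponding face equals $\conv\{\gamma_3(t) : \phi(\gamma_3(t)) = \max_s \phi(\gamma_3(s))\}$. Writing $\phi(x,y,z,w) = ax+by+cz+dw$, up to a positive multiplicative factor the composition $\phi \circ \gamma_3$ takes the form $\cos(\zeta)\cos(t-\alpha) + \sin(\zeta)\cos(3t-\beta)$ for some $\zeta \in [0,\pi/2]$ and phases $\alpha,\beta$. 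Substituting $s = t - \alpha$ puts this in precisely the family $P_{\zeta,\theta}$ of \Cref{lem:max}, with $\theta = \beta - 3\alpha$, so the classification of faces reduces to the classification of global maximizer sets already carried out there.

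First I would read off the possible cardinalities of the maximizer set from \Cref{lem:max}: one point generically (case (I)), two points when $\theta = \pm\pi$ and $\zeta \in (\zeta_0, \pi/2)$ (case (II)), and three equally spaced points when $\zeta = \pi/2$ (case (III)). Because $\mathcal{B}_4$ is centrally symmetric ($\gamma_3(t+\pi) = -\gamma_3(t)$), the sets of minimizers of any $\phi$ are antipodal images of maximizer sets and therefore yield no additional face types. Since every maximizer set has cardinality at most three, no face of $\mathcal{B}_4$ has dimension greater than $2$.

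Next I would verify that every face in the statement is actually realized. Vertices follow from case (I) with $\zeta = 0$. Case (III), where $P_{\pi/2,\theta}(s) = \cos(3s-\theta)$, has three equally spaced maximizers, which after the shift by $\alpha$ reassemble into exactly $\Delta_{t_0}$ with $t_0 = \alpha + \theta/3$; varying $\alpha,\theta$ produces every $\Delta_{t_0}$. For case (II), the proof of \Cref{lem:max} locates the maximizers at $s = \pm t^*(\zeta)$ with $4\sin^2(t^*) = 3 - \cot(\zeta)/3$; as $\zeta$ increases from $\zeta_0$ toward $\pi/2$, $t^*(\zeta)$ sweeps the interval $(0,\pi/3]$, so $2t^*(\zeta)$ sweeps $(0, 2\pi/3]$. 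Given any pair $t_1 \neq t_2$ with $d_1(t_1, t_2) \leq 2\pi/3$, setting $\alpha = \tfrac{t_1+t_2}{2}$, $\theta = \pi$, and solving for the appropriate $\zeta$ exhibits $[\gamma_3(t_1),\gamma_3(t_2)]$ as an exposed face.

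The main subtlety I expect concerns edges of length exactly $2\pi/3$: these do not arise as isolated pairs in case (II) (since that case is open in $\zeta$) but rather as the three edges of each triangle $\Delta_{t_0}$ produced by case (III), and I would invoke the fact that a face of a face of a polytope is itself a face of the polytope to include them. As a consistency check, the rotations $T_s$ from \Cref{rem:rot} are isometries carrying $\mathcal{B}_4$ to itself and acting by $s$-translation on the $t$-parameter, so the enumeration above is manifestly closed under this symmetry, providing confidence that no case has been missed.
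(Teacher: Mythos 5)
This theorem is not proved in the paper at all: it is quoted from Smilansky and Barvinok--Novik, and the paper only remarks (after \Cref{prop:voro-conv}) that it can be \emph{recovered} from \Cref{lem:max} combined with the Voronoi-cell/face duality of \Cref{prop:voro-conv}. Your argument uses the same key ingredient, \Cref{lem:max}, but runs it directly through supporting functionals instead of through the Voronoi duality: writing $\phi\circ\gamma_3$ as $\cos(\zeta)\cos(t-\alpha)+\sin(\zeta)\cos(3t-\beta)$ up to a positive factor is correct, and reading the face structure off the maximizer sets of $P_{\zeta,\theta}$ is exactly the computation the paper's remark has in mind, arguably in its most economical form. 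Your realization arguments (vertices from $\zeta=0$, triangles from $\zeta=\tfrac{\pi}{2}$, edges from the $\theta=\pm\pi$ branch with $4\sin^2(t^*)=3-\cot(\zeta)/3$) are sound.

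Two points should be stated more carefully. First, the supporting-hyperplane dictionary classifies only the \emph{exposed} faces, while the theorem lists all proper faces; to close the argument you need the standard fact that every proper face of a compact convex body is contained in a proper exposed face (take a supporting hyperplane at a point of its relative interior), so that the only candidate non-exposed faces are faces of the exposed segments and triangles. This is also where the boundary edges enter: within case (II) the half-separation $t^*(\zeta)$ sweeps the \emph{open} interval $(0,\tfrac{\pi}{3})$, not $(0,\tfrac{\pi}{3}]$, so segments with $d_1(t_1,t_2)=\tfrac{2\pi}{3}$ are never exposed (your earlier sentence claiming to exhibit every pair with $d_1\leq\tfrac{2\pi}{3}$ as an exposed face should be restricted to strict inequality); they arise exactly as faces of the triangles $\Delta_t$, as your ``subtlety'' paragraph says. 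Second, $\mathcal{B}_4$ is an orbitope with a continuum of vertices, not a polytope, so the ``face of a face is a face'' property should be invoked for general convex sets (where it does hold for faces in the extreme-face sense), not for polytopes. With those two standard facts made explicit, your proof is complete and is a legitimate alternative to citing Smilansky/Barvinok--Novik.
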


Whereas a complete characterization of the facial structure of $\mathcal{B}_{2k}$ for general $k$ does not seem the be yet available \cite[Section 2]{barvinokbeauty},\footnote{Barvinok notes ``For larger [$k$], we have only some fragmentary information regarding the facial
structure of the convex hull of ...[$\gamma_{2k+1}$].."} the following result provides a complete description of  its edges.
\begin{theorem}[{\cite[Theorem 1.1]{barvinok2008centrally} and \cite[Theorem 1]{vinzant2011edges}}]\label{thm:gen-edges} For $t_1\neq t_2$ in $\Sp^{1}$, the segment $[\gamma_{2k-1}(t_1),\gamma_{2k-1}(t_2)]$ is an exposed edge of $\mathcal{B}_{2k}$ if and only if $d_1(t_1,t_2)\leq \delta_{k-1}$.
\end{theorem}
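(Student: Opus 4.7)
The plan is to translate the exposed-edge condition into an extremum problem for odd-frequency trigonometric polynomials. Since the extreme points of $\mathcal{B}_{2k}$ lie on the curve $\gamma_{2k-1}(\Sp^1)$, a segment $[\gamma_{2k-1}(t_1), \gamma_{2k-1}(t_2)]$ is an exposed edge exactly when there is a linear functional $\phi$ on $\R^{2k}$ whose restriction to $\gamma_{2k-1}(\Sp^1)$ attains its maximum only at $\{t_1, t_2\}$. Written in coordinates, this restriction is a trigonometric polynomial of the form
$$P(t) = \sum_{j=0}^{k-1}\bigl(a_j\cos((2j+1)t) + b_j\sin((2j+1)t)\bigr),$$
ranging over the $2k$-dimensional real vector space of such polynomials, and automatically satisfying $P(t+\pi) = -P(t)$. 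The problem reduces to characterizing when one can choose $P$ with $\mathrm{argmax}_{t\in\Sp^1}P(t) = \{t_1, t_2\}$.

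Using the rotation family $T_s$ from \Cref{rem:rot} (which acts by translation on the parameter of $\gamma_{2k-1}$), I would first normalize to $(t_1, t_2) = (-\alpha, \alpha)$ with $2\alpha := d_1(t_1, t_2)$. Averaging $P$ with $t \mapsto P(-t)$ preserves the argmax condition and yields an even polynomial, so without loss of generality $P(t) = \sum_{j=0}^{k-1} c_j\cos((2j+1)t)$. Via the Chebyshev substitution $u = \cos t$ and the identity $\cos((2j+1)t) = \mathcal{T}_{2j+1}(u)$ (an odd polynomial of degree $2j+1$), we have $P(t) = Q(\cos t)$ with $Q$ an odd real polynomial of degree at most $2k-1$. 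The equivalent question becomes: for which $u_0 = \cos\alpha \in [0, 1)$ does there exist such a $Q$ whose maximum over $[-1, 1]$ is attained only at $u = u_0$?

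For the sufficiency (``if'') direction, I would construct $Q$ explicitly by writing $Q(u_0) - Q(u) = (u - u_0)^2 S(u)$ with $S \geq 0$ on $[-1, 1]$ and $\deg S \leq 2k-3$; imposing oddness of $Q$ yields the polynomial identity $2Q(u_0) = (u - u_0)^2 S(u) + (u + u_0)^2 S(-u)$, equivalently that the even part of $(u - u_0)^2 S(u)$ is a constant. Expanding this in coefficients of $S$ unwinds into a linear recurrence determining the even coefficients of $S$ from a free choice of its odd coefficients. The threshold $d_1(t_1, t_2) \leq \delta_{k-1}$ would then appear as the precise range of $u_0$ for which there is a feasible choice of odd coefficients making $S$ nonnegative on $[-1, 1]$ while keeping $Q$ strictly below $Q(u_0)$ at the boundary points $u = \pm 1$ (beyond the threshold, an extra maximum develops at $u = -1$, i.e.\ at $t = \pi$).

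For the necessity (``only if'') direction, suppose such $Q$ exists with maximum value $M$. Applying Fej\'er--Riesz to $M - P(t) \geq 0$ (which has double zeros at $t = \pm\alpha$), write $M - P(t) = |R(e^{it})|^2$ for a polynomial $R(z) \in \C[z]$ of degree at most $2k-1$ with simple roots at $e^{\pm i\alpha}$. The restriction that $P$ has only odd frequencies is equivalent to the functional identity $|R(z)|^2 + |R(-z)|^2 \equiv 2M$ on $|z| = 1$. The main obstacle I expect is this final step: extracting from this functional equation, combined with the zero structure and degree bound of $R$, the sharp inequality $2\alpha \leq \delta_{k-1}$. This amounts to a Markov--Tur\'an-type extremal statement for the critical-point locations of odd-frequency trigonometric polynomials of degree $2k-1$, and matches, in the case $k = 2$, the explicit edge characterization of Smilansky recorded in \Cref{thm:smilansky}.
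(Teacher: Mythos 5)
There is a genuine gap, and it is worth noting first that the paper itself offers no proof of \Cref{thm:gen-edges}: the statement is imported from \cite{barvinok2008centrally} and \cite{vinzant2011edges}, so what you are proposing is a re-proof of a substantial theorem from the orbitope literature. Your reduction is sound as far as it goes: exposedness of the segment is equivalent to the existence of a linear functional whose restriction to the curve, an odd-harmonic trigonometric polynomial of degree $2k-1$, attains its maximum exactly at $\{t_1,t_2\}$; the symmetrization step is valid (if $P$ has argmax $\{\pm\alpha\}$, so does its even part), and the Chebyshev substitution turning the question into one about odd polynomials $Q$ of degree at most $2k-1$ with a unique maximizer $u_0=\cos\alpha$ on $[-1,1]$ is correct. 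But this reformulation is where the content of the theorem begins, and the proposal stops there in both directions. For the ``if'' direction, the assertion that the feasibility of choosing $S\geq 0$ with the even part of $(u-u_0)^2S(u)$ constant holds precisely up to the threshold $\delta_{k-1}$ is exactly what has to be proved; no construction or feasibility analysis is carried out (Barvinok--Novik do this by exhibiting explicit support functionals). Moreover, your guiding heuristic that past the threshold ``an extra maximum develops at $u=-1$'' is special to $k=2$: at the threshold the support functional degenerates to a rotate of $\pm\cos((2k-1)t)$, whose argmax is the regular $(2k-1)$-gon of \Cref{rem:gen-faces}, and the additional maximizers are the other vertices of that polygon; after the substitution $u=\cos t$ these are typically interior points of $[-1,1]$ (for $k=3$ they are $t=0$ and $t=\pm\tfrac{4\pi}{5}$), so the boundary-point analysis you envision does not locate the obstruction in general.

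For the ``only if'' direction you explicitly leave the key step open, and that step is the actual theorem: the identity $|R(z)|^2+|R(-z)|^2\equiv 2M$ on $|z|=1$ is where Vinzant's argument starts, not ends; extracting the sharp bound on $d_1(t_1,t_2)$ from the degree and zero structure of $R$ is precisely her zero-counting/extremal analysis. There is also a boundary subtlety your plan would hit even if completed: with your (correct) criterion, exposedness at distance exactly $\delta_{k-1}$ would require a functional whose argmax is exactly the two points, and for $k=2$ the paper's own \Cref{lem:max} rules this out --- every functional on $\R^4$ with a two-point argmax on $\gamma_3$ (after normalizing by the rotations $T_s$ and scaling) has its two maximizers at distance strictly less than $\tfrac{2\pi}{3}$, and the threshold segments arise only as sub-faces of the triangle faces in \Cref{thm:smilansky}, not via a two-point contact set. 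So a completed version of your argument would yield the strict-inequality form of exposedness, and handling the case $d_1(t_1,t_2)=\delta_{k-1}$ in the statement as quoted requires explicitly engaging with the distinction between edges and exposed edges at the threshold, as the cited papers do. In short, the skeleton is a reasonable reformulation, but both implications and the threshold case remain unproven.
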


The following result provides partial information on higher dimensional faces. 
\begin{theorem}[{\cite[Theorem 1.2]{barvinok2008centrally} and \cite[Theorem 1.1]{barvinok2013neighborliness}}] \label{thm:gen-other} For every $k$ there exists a number $\pi>\phi_k>\tfrac{\pi}{2}$ such that if $\ell\leq k$ and $A=\{t_1,\ldots,t_\ell\} \subset \Sp^1$ are $\ell$ distinct points contained in an arc with length at most  $\phi_k$, then 
$\conv\big(\gamma_{2k-1}(A)\big)$ is an $(\ell-1)$-dimensional exposed face of $\mathcal{B}_{2k}.$
\end{theorem}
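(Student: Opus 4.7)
The approach is to translate the face-theoretic claim into a trigonometric interpolation problem. A subset of $\mathcal{B}_{2k}$ is an exposed face exactly when it is the zero set of some affine functional that is nonpositive on $\mathcal{B}_{2k}$. For the Barvinok-Novik polytope, evaluating $v \cdot \gamma_{2k-1}(t)$ for $v \in \R^{2k}$ produces a trigonometric polynomial whose spectrum is contained in the odd frequencies $\{\pm 1, \pm 3, \ldots, \pm(2k-1)\}$. So the theorem reduces to constructing, for each $A = \{t_1, \ldots, t_\ell\}$ contained in an arc of length at most $\phi_k$, an odd-spectrum trigonometric polynomial of degree at most $2k-1$ whose maximum on $\Sp^1$ is attained exactly on $A$, together with checking that the images $\gamma_{2k-1}(t_i)$ are affinely independent in $\R^{2k}$.

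The affine independence step I would handle by a Vandermonde-style computation on the odd moment curve: for $\ell \leq k$ distinct $t_i$, the matrix built from the coordinates of $\gamma_{2k-1}(t_i)$ has nonvanishing minors, so the images span an $(\ell-1)$-dimensional affine subspace. For the main construction I would start from a nonnegative trigonometric polynomial vanishing exactly on $A$, such as
$$N(t) := \prod_{i=1}^\ell \sin^2\!\left(\tfrac{t - t_i}{2}\right),$$
and then pass to the odd-spectrum subspace by antipodal antisymmetrization $t \mapsto \tfrac{1}{2}(N(t) - N(t+\pi))$ combined with Fourier truncation to degree $\leq 2k-1$. Because $\ell \leq k$, there are enough free coefficients in the truncation to enforce that the resulting polynomial remains nonnegative with zero set exactly $A$; the required positivity reduces to the solvability of a Toeplitz-like linear system whose total positivity follows from the reproducing-kernel structure of the odd-frequency subspace.

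The hard part will be identifying the sharp threshold $\phi_k \in (\tfrac{\pi}{2}, \pi)$ and certifying that the constructed polynomial does not develop competing global maxima outside $A$. The threshold is dictated by the interplay between the allowed odd frequencies and the prescribed vanishing of the supporting polynomial: as the arc containing $A$ stretches, an extraneous critical point on the complementary arc eventually climbs to match the supporting value, and the face collapses. For $\ell = 2$ this mechanism already fixes $\phi_k \leq \delta_{k-1} = \tfrac{2\pi(k-1)}{2k-1}$ by \Cref{thm:gen-edges}, and for general $\ell$ I would expect the same extremal analysis — adapted to $\ell$ simultaneous maxima rather than two — to yield the optimal $\phi_k$, together with the needed strict-positivity certificate on $\Sp^1 \setminus A$.
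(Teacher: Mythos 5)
First, note that \Cref{thm:gen-other} is quoted in this paper from \cite{barvinok2008centrally} and \cite{barvinok2013neighborliness} without proof, so the only meaningful comparison is with the proofs in those references. Your opening reduction is the standard and correct one: an exposed face of $\mathcal{B}_{2k}$ containing $\gamma_{2k-1}(A)$ corresponds to a linear functional, i.e.\ a trigonometric polynomial with spectrum in the odd frequencies $\{\pm 1,\pm 3,\ldots,\pm(2k-1)\}$, whose set of global maxima on $\Sp^1$ is exactly $A$; and affine independence of the $\gamma_{2k-1}(t_i)$ is indeed a Vandermonde/Chebyshev-type computation. But the construction you then sketch does not work as stated, and the decisive part of the theorem is missing. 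An odd-spectrum polynomial $p$ satisfies $p(t+\pi)=-p(t)$, so it can never be ``nonnegative with zero set exactly $A$''; what must be produced is a polynomial whose \emph{maximum} set is exactly $A$ (equivalently $c-\langle v,\cdot\rangle\ge 0$ with equality precisely on the face, the constant $c$ being essential). The antisymmetrization $\tfrac12\big(N(t)-N(t+\pi)\big)$ of $N(t)=\prod_i\sin^2\!\big(\tfrac{t-t_i}{2}\big)$ already fails at the first hurdle: its values at the points of $A$ are $-\tfrac12 N(t_i+\pi)$, which are negative and, worse, generically distinct, so the points $\gamma_{2k-1}(t_i)$ do not even lie on a common level set of the corresponding functional, let alone on its maximizing level. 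Repairing this is exactly the content of the theorem, and the appeal to ``a Toeplitz-like linear system whose total positivity follows from the reproducing-kernel structure'' is an unsubstantiated placeholder, not an argument. (Also, no Fourier truncation is needed: the odd part of $N$ already has degree $\le\ell\le k$.)

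The deeper gap is that you never establish the actual claim, namely the existence of a \emph{uniform} threshold $\phi_k$ strictly larger than $\tfrac{\pi}{2}$; you explicitly defer this as ``the hard part.'' In \cite{barvinok2008centrally} this is where all the work lies: Barvinok and Novik first give an explicit construction of a supporting odd-frequency polynomial when the $\ell\le k$ points lie in an arc of length $\tfrac{\pi}{2}$ (working multiplicatively with $z=e^{it}$ and the doubled variable $z^2$, so that only odd powers occur), and then obtain some $\phi_k>\tfrac{\pi}{2}$ by a limiting/compactness argument rather than by exhibiting the sharp constant; the refinements in \cite{barvinok2013neighborliness} show the thresholds tend to $\tfrac{\pi}{2}$ and compute only $\phi_3,\phi_4$. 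Your closing expectation that the two-point extremal analysis of \Cref{thm:gen-edges} would ``yield the optimal $\phi_k$'' is therefore unrealistic (the edge theorem only gives an upper bound $\phi_k\le\delta_{k-1}$ on the threshold, which is not what is needed), and in general the sharp value is not known. As it stands, the proposal contains a correct reformulation plus a construction whose key positivity/maximality step is both unproved and, in the form written, false, so it does not constitute a proof of \Cref{thm:gen-other}.
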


\begin{remark}
Certainly, through \Cref{thm:smilansky}, $\phi_2 = \tfrac{2\pi}{3}$. The authors of \cite{barvinok2013neighborliness}  verified that:
\begin{itemize}
\item $\lim_k\phi_k = \tfrac{\pi}{2}$,
\item $\phi_3 = \pi - \arccos\left(\tfrac{3-\sqrt{5}}{2}\right)$,
\end{itemize}
and also determined the precise value of $\phi_4$. 
\end{remark}

\begin{remark}[A family of simplicial faces]\label{rem:gen-faces}
In \cite[page 86]{barvinok2008centrally} Barvinok and Novik also describe the following 1-parameter family of $(2k-1)$-dimensional simplicial faces of $\mathcal{B}_{2k}$:
$$\Delta_t:=\conv\left(T_t\big(\gamma_{2k-1}(Q_{2k-1})\big)\right),\,\,\mbox{where}\,\, Q_{2k-1}:=\big\{0,\tfrac{2\pi}{2k-1},\ldots,\tfrac{2\pi (2k-2)}{2k-1}\big\}$$ are the vertices of a regular odd polygon inscribed in $\Sp^1$. The authors observe that, due to properties of the TMC, $\Delta_t$ is a $(2k-2)$-dimensional \emph{regular} simplex. 
\end{remark}

In \cite[Proposition 5.2]{sinn2013algebraic} the author establishes that, in fact,  all faces of $\mathcal{B}_{2k}$ are simplicial. Combining this with \Cref{thm:gen-edges} we obtain the following.
\begin{corollary}\label{coro:sinn}
    Every face $\sigma$ of $\mathcal{B}_{2k}$ is a simplex of the form $\sigma = \conv(\gamma_{2k-1}(A))$ for some finite subset $A\subset \Sp^1$ such that $\diam(A)\leq \delta_{k-1}$.
\end{corollary}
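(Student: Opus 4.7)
The plan is to combine Sinn's simpliciality result with the edge characterization in Theorem \ref{thm:gen-edges}. The core idea is that every face of $\mathcal{B}_{2k}$ is a simplex whose vertices lie on the TMC, and the pairwise-distance condition then drops out automatically from the edge characterization.

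First, I would argue that every extreme point of $\mathcal{B}_{2k}$ lies on $\gamma_{2k-1}(\Sp^1)$. Since $\mathcal{B}_{2k} = \conv(\gamma_{2k-1}(\Sp^1))$ and $\gamma_{2k-1}(\Sp^1)$ is compact, the Krein-Milman theorem forces all extreme points to lie in this set. Moreover, Theorem \ref{thm:gen-edges} already implies that every $\gamma_{2k-1}(t)$ participates in some exposed edge and is therefore itself exposed (hence extreme). Consequently, the vertex set of any face $\sigma$ of $\mathcal{B}_{2k}$ has the form $\gamma_{2k-1}(A)$ for a finite $A \subset \Sp^1$.

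Next, I would invoke [Proposition 5.2]{sinn2013algebraic} to conclude that $\sigma$ is simplicial; combined with the previous paragraph this gives $\sigma = \conv(\gamma_{2k-1}(A))$ with the points $\{\gamma_{2k-1}(t) : t \in A\}$ affinely independent. Now for any two distinct $t_1, t_2 \in A$, the segment $[\gamma_{2k-1}(t_1),\gamma_{2k-1}(t_2)]$ is a 1-dimensional face of the simplex $\sigma$, and since $\sigma$ is itself a face of $\mathcal{B}_{2k}$, this segment is an edge of $\mathcal{B}_{2k}$. Applying Theorem \ref{thm:gen-edges} yields $d_1(t_1,t_2) \leq \delta_{k-1}$, and taking the supremum over pairs gives $\diam(A) \leq \delta_{k-1}$.

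The one subtlety I anticipate lies in reconciling "face" with "exposed face": Theorem \ref{thm:gen-edges} is stated for exposed edges, whereas Sinn's simpliciality is a statement about faces in the abstract convex-geometric sense. For a simplicial face $\sigma$ of a convex body, every edge of $\sigma$ is exposed within $\sigma$; lifting the supporting hyperplane that exposes $\sigma$ in $\mathcal{B}_{2k}$ and combining with the in-$\sigma$ supporting hyperplane should yield a hyperplane of $\R^{2k}$ exposing the edge in all of $\mathcal{B}_{2k}$. I expect this verification to be routine but it is the only nontrivial step in the argument.
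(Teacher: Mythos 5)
Your proposal is correct in substance and takes essentially the same route as the paper, which obtains the corollary exactly by combining Sinn's simpliciality result with \Cref{thm:gen-edges}; your Krein--Milman/vertices-on-the-curve step and the face-of-a-face observation are just the implicit details of that combination spelled out. The exposed-versus-abstract-face subtlety you flag is glossed over by the paper as well; only note that your suggested fix (combining the hyperplane exposing $\sigma$ with one exposing the edge inside $\sigma$) presupposes $\sigma$ is exposed and is not automatic for a non-polytopal convex body, so that step deserves the care you anticipate rather than being fully routine.
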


\subsection{An application to \Cref{conj:cov} \extension}\label{sec:cov}
Note that the boundary $\partial \mathcal{B}_{2k+2}$ is homeomorphic to $\Sp^{2k+1}$ and its  projection onto  $\Sp^{2k+1}$ gives a simplicial decomposition $\mathfrak{D}_{2k+1}$ of $\Sp^{2k+1}$,
$$\Sp^{2k+1} = \bigcup_{\sigma\in \mathfrak{D}_{2k+1}}\sigma,$$ into geodesic (and therefore convex) simplices of different dimensions. This then suggests that, in order to tackle \Cref{conj:cov}, one could exploit the precise description of the cells given by \Cref{thm:smilansky}, \Cref{thm:gen-edges}, \Cref{thm:gen-other} and \Cref{rem:gen-faces} in order to 
verify  $$\min_{t\in \Sp^1}d_{2k+1}(q,\gamma_{2k+1}(t))\leq \frac{\delta_{k}}{2}\,\,\mbox{for all $q\in \sigma \in \mathfrak{D}_{2k+1}$}.$$

Using this strategy we
are able to establish  \Cref{conj:cov} for the case $k=1$. We also provide partial information about other cases which could be useful as more information becomes available regarding the faces of the Barvikok-Novik polytope. Some of the arguments in the proof of \Cref{thm:cov-3} can be generalized beyond the case $k=1$.

\begin{proposition}\label{thm:cov-3}
For all $q\in \Sp^{3}$ one has 
$$\min_{t\in \Sp^1}d_{3}(q,\gamma_{3}(t))\leq \frac{\pi}{3}.$$
\end{proposition}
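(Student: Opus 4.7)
The plan is to exploit Smilansky's complete description (\Cref{thm:smilansky}) of the two-dimensional faces of $\mathcal{B}_4$ together with a direct calculation on a single such face. Since $\mathcal{B}_4$ is convex with the origin in its interior, for any $q \in \Sp^3$ there will be a unique $r \in (0,1]$ with $\bar q := r q \in \partial \mathcal{B}_4$. Because every point of $\partial \mathcal{B}_4$ lies in the closure of some $2$-face, \Cref{thm:smilansky} furnishes a $t \in \Sp^1$ with $\bar q \in \Delta_t = \conv(u,v,w)$, where $u := \gamma_3(t)$, $v := \gamma_3(t + 2\pi/3)$, $w := \gamma_3(t - 2\pi/3)$. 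I would then aim to show $d_3(q, u) \leq \pi/3$ after a suitable labeling of the three vertices.

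Writing $\bar q = \alpha u + \beta v + \gamma w$ with $\alpha, \beta, \gamma \geq 0$ and $\alpha + \beta + \gamma = 1$, I would relabel so that $\alpha = \max(\alpha, \beta, \gamma) \geq 1/3$; this is legitimate because $\Delta_t$, $\Delta_{t+2\pi/3}$, and $\Delta_{t-2\pi/3}$ denote the same subset of $\partial \mathcal{B}_4$, so the three vertices play symmetric roles. The arithmetic hinge is that $\{u,v,w\}$ forms a regular simplex: by \Cref{prop:props-hk}(2),
\[
u \cdot v = v \cdot w = w \cdot u = h_1(2\pi/3) = \tfrac{1}{4}.
\]
Using $q = \bar q / \|\bar q\|$ together with these inner products, a direct computation yields $q \cdot u = (3\alpha + 1)/(4\|\bar q\|)$ and $\|\bar q\|^2 = 1 - \tfrac{3}{2}(\alpha\beta + \beta\gamma + \gamma\alpha)$.

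It will then suffice to verify $q \cdot u \geq \cos(\pi/3) = 1/2$, equivalently $(3\alpha + 1)^2 \geq 4\|\bar q\|^2$. After substituting the expression for $\|\bar q\|^2$ and using $\alpha + \beta + \gamma = 1$, this cleanly reduces to the scalar inequality $\alpha^2 + 4\alpha + 2\beta\gamma \geq 1$, which is immediate from $\alpha \geq 1/3$ since $\alpha^2 + 4\alpha \geq 13/9 > 1$ already and $\beta\gamma \geq 0$.

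I expect the only substantive obstacle to be conceptual rather than technical: the argument rests entirely on the fact that every boundary point of $\mathcal{B}_4$ sits inside an equilateral triangle with explicitly known vertex set, i.e.\ on Smilansky's theorem. This is precisely the ingredient that is not yet available for $\mathcal{B}_{2k+2}$ with $k \geq 2$; the partial information in \Cref{thm:gen-edges}, \Cref{thm:gen-other}, and \Cref{rem:gen-faces} is what would have to be upgraded to adapt the strategy above, and this is exactly where the general case of \Cref{conj:cov} presently obstructs.
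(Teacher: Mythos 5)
Your barycentric computation on a triangle is correct (with $u\cdot v=v\cdot w=w\cdot u=h_1(\tfrac{2\pi}{3})=\tfrac14$ the algebra does reduce to $\alpha^2+4\alpha+2\beta\gamma\geq 1$, which holds for $\alpha\geq\tfrac13$), but the reduction to that case fails: it is \emph{not} true that every point of $\partial\mathcal{B}_4$ lies in the closure of some $2$-face. \Cref{thm:smilansky} itself lists as faces the ``short'' edges $[\gamma_3(t_1),\gamma_3(t_2)]$ with $0<d_1(t_1,t_2)<\tfrac{2\pi}{3}$, and no such edge is contained in any triangle $\Delta_s$: if its relative interior met the face $\Delta_s$, the whole edge would lie in $\Delta_s$ (a face whose relative interior meets another face is contained in it), so its endpoints---extreme points of $\mathcal{B}_4$---would have to be among the vertices $\gamma_3(s),\gamma_3(s\pm\tfrac{2\pi}{3})$, forcing $d_1(t_1,t_2)=\tfrac{2\pi}{3}$. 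This omission is not negligible either: the short edges form a two-parameter family of segments, so their union is a three-dimensional portion of the three-dimensional boundary $\partial\mathcal{B}_4$. Consequently, for a ``fat'' set of $q\in\Sp^3$ the radial projection $\bar q$ lies only on such an edge, and your argument, which as you say rests entirely on landing in a triangle with known vertices, says nothing about those $q$.

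The gap is fixable, and the fix is precisely the step your proposal skips and the paper does not: handle the one-dimensional cells separately. If $\bar q=\alpha u+\beta v$ with $\alpha+\beta=1$, $u=\gamma_3(t_1)$, $v=\gamma_3(t_2)$ and $d_1(t_1,t_2)\leq\tfrac{2\pi}{3}$, then item 4 of \Cref{prop:props-hk} gives $u\cdot v=h_1\big(d_1(t_1,t_2)\big)\geq\cos\tfrac{2\pi}{3}$, so $d_3(u,v)\leq\tfrac{2\pi}{3}$; the radial projection of the chord is the minimizing arc from $u$ to $v$, and the nearer endpoint is within $\tfrac12 d_3(u,v)\leq\tfrac{\pi}{3}$ of $q$ (alternatively, rerun your two-point analogue of the barycentric computation). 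This is exactly \Cref{lemma:cov-1-dim} for $k=1$, which the paper invokes before treating the two-dimensional cells. With that case added, your Euclidean computation on the triangles is a correct and pleasantly explicit substitute for the paper's ``center of the spherical triangle is at distance $\arccos(1/\sqrt2)=\tfrac\pi4$'' argument, but on its own it does not cover all of $\Sp^3$.
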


\begin{lemma}[\extension]\label{lemma:cov-1-dim}
For all $q\in\sigma\in \mathfrak{D}_{2k+1}$, where $\sigma$ has dimension $1$, one has
$$\min_{t\in \Sp^1}d_{2k+1}(q,\gamma_{2k+1}(t))\leq \tfrac{\delta_k}{2}.$$
\end{lemma}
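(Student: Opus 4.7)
The plan is to combine the characterization of edges of the Barvinok–Novik polytope (\Cref{thm:gen-edges}) with the estimate on $h_k$ from \Cref{prop:props-hk}(4). Setting up indexing: since $\Sp^{2k+1}\subset\R^{2k+2}$, the relevant polytope is $\mathcal{B}_{2k+2}=\conv(\gamma_{2k+1}(\Sp^1))$, and replacing $k$ by $k+1$ in \Cref{thm:gen-edges} tells us that its $1$-dimensional faces are precisely the segments $[\gamma_{2k+1}(t_1),\gamma_{2k+1}(t_2)]$ with $d_1(t_1,t_2)\leq\delta_k$.

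First I would identify the $1$-simplices of $\mathfrak{D}_{2k+1}$: since $\gamma_{2k+1}$ is antipodal and $\mathcal{B}_{2k+2}$ is a centrally symmetric full-dimensional polytope containing the origin in its interior, the radial projection $v\mapsto v/\|v\|$ is a homeomorphism $\partial\mathcal{B}_{2k+2}\to\Sp^{2k+1}$. The image of an edge $[\gamma_{2k+1}(t_1),\gamma_{2k+1}(t_2)]$ under this projection lies in the $2$-plane spanned by the two unit vectors $\gamma_{2k+1}(t_1),\gamma_{2k+1}(t_2)$, and is therefore the minimizing geodesic arc on $\Sp^{2k+1}$ connecting its endpoints. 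Hence every $1$-dimensional $\sigma\in\mathfrak{D}_{2k+1}$ is the geodesic arc from $\gamma_{2k+1}(t_1)$ to $\gamma_{2k+1}(t_2)$ for some pair with $d_1(t_1,t_2)\leq\delta_k$.

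Next I would bound the length of such an arc. By \Cref{prop:props-hk}(1)(4), for $|t_1-t_2|\leq\delta_k$ one has $h_k(t_1-t_2)\geq\cos(\delta_k)$, so
\[
d_{2k+1}(\gamma_{2k+1}(t_1),\gamma_{2k+1}(t_2))=\arccos\bigl(h_k(t_1-t_2)\bigr)\leq\arccos(\cos(\delta_k))=\delta_k.
\]
For any $q\in\sigma$, the two geodesic subsegments from $q$ to the endpoints partition $\sigma$ into pieces of total length equal to the length of $\sigma$, so
\[
\min_{i=1,2}d_{2k+1}(q,\gamma_{2k+1}(t_i))\leq\tfrac{1}{2}\,d_{2k+1}(\gamma_{2k+1}(t_1),\gamma_{2k+1}(t_2))\leq\tfrac{\delta_k}{2},
\]
which gives the desired upper bound on $\min_{t\in\Sp^1}d_{2k+1}(q,\gamma_{2k+1}(t))$.

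I expect the calculational content to be essentially zero; the main point is bookkeeping. The only nontrivial step is invoking \Cref{thm:gen-edges}, which is the deep input from \cite{barvinok2008centrally,vinzant2011edges}. A minor technical care is warranted in verifying that radial projection sends polytope edges to geodesic arcs (rather than to some more complicated curve), but this is immediate from the fact that an edge of $\mathcal{B}_{2k+2}$ has both endpoints on $\Sp^{2k+1}$, so the chord lies in a $2$-plane through the origin and projects radially onto a great-circle arc.
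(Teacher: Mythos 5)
Your proposal is correct and follows essentially the same route as the paper: identify the $1$-cells via \Cref{thm:gen-edges} (applied to $\mathcal{B}_{2k+2}$) as geodesic arcs between $\gamma_{2k+1}(t_1)$ and $\gamma_{2k+1}(t_2)$ with $d_1(t_1,t_2)\leq\delta_k$, bound the arc length by $\delta_k$ using \Cref{prop:props-hk}(4), and conclude by the half-length argument. Your extra care in checking that radial projection of a polytope edge is a great-circle arc, and your explicit index shift to $\delta_k$ (the paper's proof writes $\tfrac{2\pi}{3}$, i.e.\ the $k=1$ value), are harmless refinements of the same argument.
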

\begin{proof}
According to \Cref{thm:gen-edges}, any such $\sigma$ is a geodesic segment  joining $\gamma_{2k+1}(t)$ and $\gamma_{2k+1}(s)$, where $d_1(t,s)\leq \tfrac{2\pi}{3}$. Any $q$ on that segment
will be at distance at most $$\mu_{k}(t,s):=\tfrac{1}{2}d_{2k+1}(\gamma_{2k+1}(t),\gamma_{2k+1}(s))$$ from the set consisting of the two endpoints of the geodesic segment.  Then, via item 4 of \Cref{prop:props-hk}, we have
$$\min_{\tau\in I}h_k(\tau) \geq \cos(\delta_k)$$
which implies that, for $t,s\in\Sp^1$ such that $d_1(t,s)\leq \delta_k$,  
$$\mu_k(t,s)\leq \frac{1}{2}\arccos\left(\min_{\tau\in I}h_k(\tau)\right)\leq \frac{1}{2}\delta_k.$$
\end{proof}

\begin{proof}[Proof of \Cref{thm:cov-3}] 
According to \Cref{coro:sinn}, $\Sp^{3}$ can be decomposed into simplicial cells $\sigma\in \mathfrak{D}_{3}$ of dimension at most $2$ such that $\sigma = \conv(\gamma_{3}(A))$, where  $A$ is a subset of $\Sp^1$ with diameter at most $\delta_1 = \tfrac{2\pi}{3}$. Note that  the cardinality of $S$  can be at most $3$. We now consider the following cases:

\begin{itemize}
\item dimension zero cells corresponding to points lying on $\gamma_{3}$;
\item dimension one cells: geodesic segments joining points $\gamma_{3}(t)$ and $\gamma_{3}(s)$ s.t. $d_1(t,s)\leq \tfrac{2\pi}{3};$
\item  dimension $2$ cells: regular spherical simplices arising as the projection on $\Sp^{3}$ of the equilateral triangles determined by triples of points of the form $\gamma_3(t)$, $\gamma_3(t+\tfrac{2\pi}{3})$ and $\gamma_3(t-\tfrac{2\pi}{3})$.
\end{itemize}

The second case can be dealt  via \Cref{lemma:cov-1-dim} (for $k=1$). The third case leads to considering, for each $t\in \Sp^1$, the point inside the aforementioned  spherical equilateral triangle that is as far as possible from its vertices. This point will be the center $(0,0,\cos(3t),\sin(3t))$ of the triangle  and this point is at distance $\arccos\left(\tfrac{1}{\sqrt{2}}\right)=\tfrac{\pi}{4}<\tfrac{\pi}{3}$ from the vertices. \end{proof}

In \Cref{sec:gen-k} we provide a precise connection between the facial structure   $\mathcal{B}_{2k+2}$ ($\mathcal{D}_{2k+1}$) and the Voronoi tiling of $\Sp^{2k+1}$ induced by the TMC.

\subsection{Connecting  structure of $\mathcal{B}_{2k}$ to  Voronoi tiling induced by $\gamma_{2k+1}$}

The following proposition establishes a duality between the facial structure of $\mathcal{B}_{2(k+1)}$ and the Voronoi tiling of $\Sp^{2k+1}$ induced by $\gamma_{2k+1}$. 
\begin{proposition}
\label{prop:voro-conv}
Let  $t_1,t_2,\ldots,t_\ell\in \Sp^1$ be distinct points. Then, $$ \bigcap_{i=1}^\ell \partial F_{2k+1}(t_i) \neq \emptyset \Longleftrightarrow\conv\big(\{\gamma_{2k+1}(t_1),\ldots,\gamma_{2k+1}(t_\ell)\}\big)\text{\,\,is an exposed face of\,\,} \mathcal{B}_{2(k+1)}.$$
\end{proposition}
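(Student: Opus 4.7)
The plan is to translate both sides of the equivalence into statements about the linear functional $f_q(t) := q\cdot \gamma_{2k+1}(t)$ on $\Sp^1$. From the third description of $F_{2k+1}(t)$ recorded in \Cref{sec:basics-fiber}, a point $q\in\Sp^{2k+1}$ belongs to $F_{2k+1}(t_i)$ exactly when $t_i$ is a global maximizer of $f_q$; with the boundary taken intrinsically in each fiber, $q\in\partial F_{2k+1}(t_i)$ means additionally that some $t\neq t_i$ achieves the same maximum. When $\ell\geq 2$ the distinctness of the $t_i$ makes non-uniqueness automatic, so the left-hand side simply becomes the existence of $q\in \Sp^{2k+1}$ such that $\{t_1,\ldots,t_\ell\}\subseteq \arg\max f_q$.

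For the direction ($\Leftarrow$), assume $\sigma:=\conv\big(\{\gamma_{2k+1}(t_1),\ldots,\gamma_{2k+1}(t_\ell)\}\big)$ is an exposed face and let $H$ be a supporting hyperplane with $H\cap \mathcal{B}_{2(k+1)}=\sigma$. The unit outward normal $q$ of $H$ then satisfies $\arg\max f_q=\{t_1,\ldots,t_\ell\}$, placing $q$ in $\bigcap_i \partial F_{2k+1}(t_i)$ by the reformulation above.

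For the direction ($\Rightarrow$), fix $q\in\bigcap_i\partial F_{2k+1}(t_i)$ and set $c:=\max f_q$ together with $A:=\{t\in\Sp^1:\,f_q(t)=c\}$, a finite set because $f_q$ is a non-constant trigonometric polynomial. Then $\{t_1,\ldots,t_\ell\}\subseteq A$ and the hyperplane $\{p:\,q\cdot p=c\}$ supports $\mathcal{B}_{2(k+1)}$ along the exposed face $F:=\conv(\gamma_{2k+1}(A))$, which is a simplex by \Cref{coro:sinn}. Consequently $\sigma$ is a face of the simplex $F$, so one can pick $v\in\R^{2k+2}$ with $v\cdot \gamma_{2k+1}(t_i)=v\cdot \gamma_{2k+1}(t_j)$ for $i,j\in\{1,\ldots,\ell\}$ and $v\cdot \gamma_{2k+1}(t)<v\cdot \gamma_{2k+1}(t_1)$ for $t\in A\setminus\{t_1,\ldots,t_\ell\}$. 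For sufficiently small $\epsilon>0$ the perturbation $q_\epsilon := q+\epsilon v$ satisfies $\arg\max f_{q_\epsilon}=\{t_1,\ldots,t_\ell\}$: for $t$ outside a neighborhood of $A$, the uniform strict gap $c-f_q(t)>0$ dominates the $\epsilon$-correction; for $t\in A\setminus\{t_1,\ldots,t_\ell\}$, exclusion follows directly from the construction of $v$. Normalizing $q_\epsilon$ to lie on $\Sp^{2k+1}$ exhibits $\sigma$ as an exposed face.

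The main obstacle will be making the perturbation step fully rigorous near points of $A\setminus\{t_1,\ldots,t_\ell\}$, where both $c - f_q(t)$ and $\epsilon\big(v\cdot \gamma_{2k+1}(t_1) - v\cdot \gamma_{2k+1}(t)\big)$ vanish simultaneously as $t$ approaches such a point. A second-order expansion at each critical point of $f_q$, combined with the affine independence of $\gamma_{2k+1}(A)$ supplied by simpliciality of $F$, should yield the required uniform comparison. An appealing alternative is to bypass perturbation entirely by invoking the general fact that convex hulls of smooth algebraic curves are facially exposed, which in tandem with \Cref{lemma:algebraic} would immediately upgrade $\sigma$ from a face of $F$ to an exposed face of $\mathcal{B}_{2(k+1)}$.
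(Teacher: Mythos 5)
Your ($\Leftarrow$) direction is the paper's own argument in analytic dress: the unit normal of the exposing hyperplane is precisely the center of the spherical cap cut off by the supporting half-space, it is equidistant to the points $\gamma_{2k+1}(t_i)$ and strictly farther from every other point of the curve, hence lies in each $\partial F_{2k+1}(t_i)$ (for $\ell\geq 2$). For ($\Rightarrow$) you correctly isolate a point that the paper's short proof passes over: a witness $q\in\bigcap_i\partial F_{2k+1}(t_i)$ only gives $\{t_1,\ldots,t_\ell\}\subseteq A:=\arg\max f_q$, and the hyperplane orthogonal to $q$ exposes $\conv(\gamma_{2k+1}(A))$, which may be strictly larger than $\sigma:=\conv(\gamma_{2k+1}(t_1),\ldots,\gamma_{2k+1}(t_\ell))$; for $k=1$ the center of a triangular face of $\mathcal{B}_4$ witnesses the left-hand side for any two of its three vertices. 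The paper implicitly treats the nearest-point set as being exactly $\{t_1,\ldots,t_\ell\}$; you try to repair the general case by perturbing $q$, and that repair is where your argument breaks.

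The difficulty is not where you place it. Near a point $s\in A\setminus\{t_1,\ldots,t_\ell\}$ nothing vanishes simultaneously: by construction $v\cdot\gamma_{2k+1}(s)<v\cdot\gamma_{2k+1}(t_1)$, so on a fixed neighborhood of $s$ the penalty $\epsilon\big(v\cdot\gamma_{2k+1}(t_1)-v\cdot\gamma_{2k+1}(t)\big)$ is bounded below by a positive multiple of $\epsilon$ while $f_q\leq c$; that region is harmless for every $\epsilon>0$. The genuine obstruction is near the retained parameters $t_i$: generically $v\cdot\dot\gamma_{2k+1}(t_i)\neq 0$, so $f_{q_\epsilon}(t)-f_{q_\epsilon}(t_i)\approx\epsilon b\,(t-t_i)-a_i(t-t_i)^{2}$ is positive for small $t-t_i$ of one sign, the maximum drifts off $t_i$, and $q_\epsilon$ exposes a nearby face rather than $\sigma$. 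Concretely, in the triangle example above with $\sigma$ the edge $[\gamma_3(0),\gamma_3(\tfrac{2\pi}{3})]$, the perturbed functional $t\mapsto\tfrac{1}{\sqrt{2}}\cos(3t)+\epsilon v\cdot\gamma_3(t)$ attains its maximum at points near, but distinct from, $0$ and $\tfrac{2\pi}{3}$, and generically at only one of them, so the argument never produces an exposing functional for the edge (whose existence at distance exactly $\delta_k$ is the nontrivial content of \Cref{thm:gen-edges}). Forcing in addition $v\cdot\dot\gamma_{2k+1}(t_i)=0$ does not close the gap, since global maxima of $f_q$ can be degenerate (the triple root of $P'_{\zeta_0,\pm\pi}$ in the proof of \Cref{lem:max} shows quartic maxima occur), so the second-order expansion you propose cannot yield the required uniform comparison. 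Your fallback is also not citable: it is false in general that convex hulls of smooth real algebraic curves are facially exposed (a smooth quartic with two convex ovals, or a non-convex smooth algebraic oval, has non-exposed extreme points at the tangency endpoints of the bitangent segments of its hull), so exposedness of subfaces would have to be established for $\mathcal{B}_{2(k+1)}$ specifically, e.g.\ via \Cref{thm:gen-edges} for $\ell=2$ or an argument tailored to sets $A$ with $\diam(A)\leq\delta_k$ as in \Cref{coro:sinn}. As written, the ($\Rightarrow$) direction is not established whenever the witness has nearest points beyond $t_1,\ldots,t_\ell$.
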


\begin{figure}[h]
    \centering
    \includegraphics[width = \linewidth]{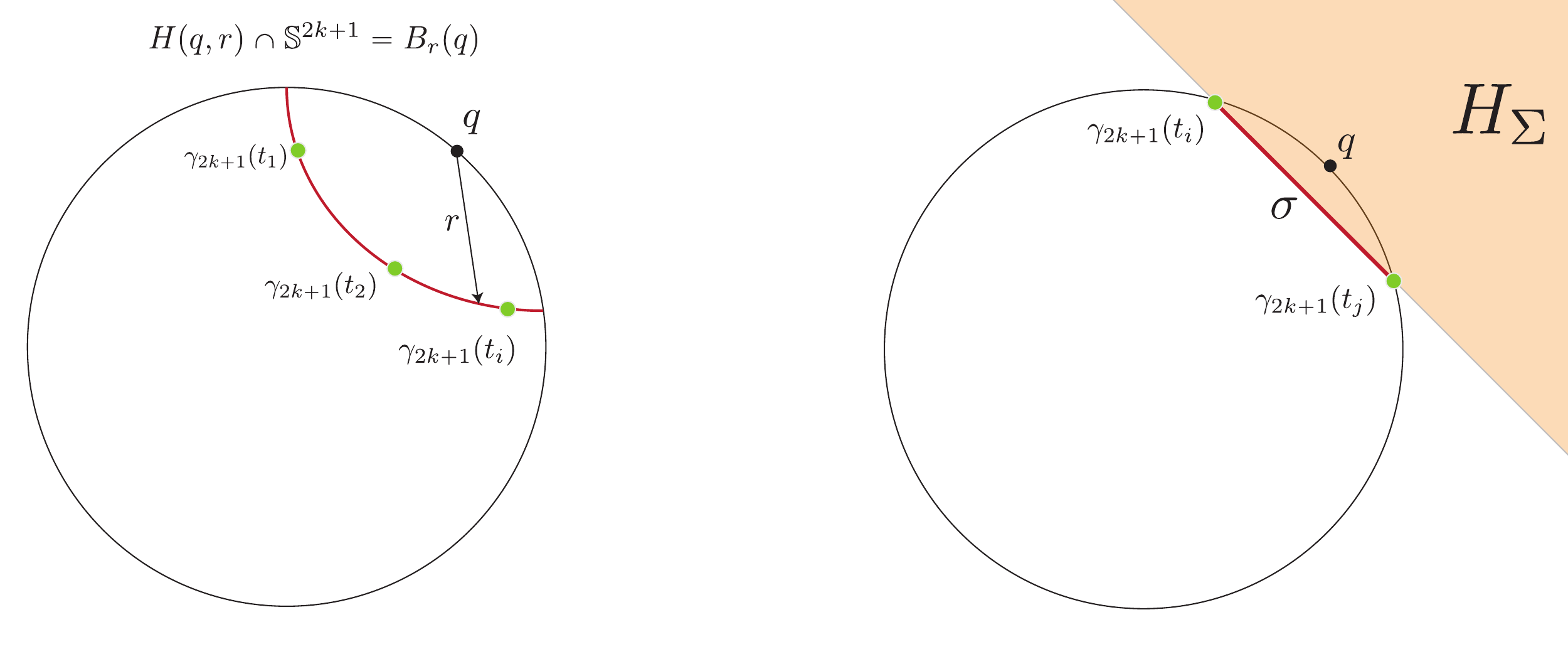}
    \caption{See the proof of \Cref{prop:voro-conv}. \textbf{Left:} the point $q\in \partial F_{2k+1}(t_i)$ is equidistant to all $\gamma_{2k+1}(t_i)$. The red line represents the boundary of  the spherical cap (geodesic ball) $B_r(q) = H(q,r)\cap \Sp^{2k+1}$.  \textbf{Right:} $\sigma$ is an exposed face of $\mathcal{B}_{2(k+1)}$. The intersection  $H_\Sigma \cap \Sp^{2k+1}$ is a spherical cap.}
    \label{fig:duality}
\end{figure}

\begin{remark}[\extension]
   Note that \Cref{lem:max} (or alternatively, \Cref{thm:fiber-3}) together with \Cref{prop:voro-conv} permits recovering \Cref{thm:smilansky} which provides a complete characterization of $\mathcal{B}_4$. The duality between the intersection pattern of Voronoi cells and the facial structure of the Barvinok-Novik polytope seems to be an interesting direction to further explore.  In particular, the current partial knowledge about the facial structure of $\mathcal{B}_{2k+2}$ can be utilized to shed light on the structure of $\partial F_{2k+1}(0)$. On the other hand, the fact that currently the full characterization of the facial structure of $\mathcal{B}_6$  is not known suggests that determining the precise shape of $\partial F_5(0)$
 might pose some challenges. One expects these challenges to  be present in terms of determining the shape of $\partial F_{2k+1}(0)$ for all $k\geq 2$.\end{remark}

\begin{remark}
    The connection between the convex hull $\conv(A)$ of a \emph{finite} subset $A$ of $\Sp^n$ and the Voronoi tiling of $\Sp^n$ it induces already appears in the PhD thesis of Brown \cite{brown1979geometric}; see also \cite{brown1979voronoi,fortune2017voronoi}. We include a proof of \Cref{prop:voro-conv} for completeness since this equivalence might not be well known.
\end{remark}

The following corollary to \Cref{thm:gen-edges} and \Cref{prop:voro-conv} will be immediately useful.
\begin{corollary}\label{coro:voro-intersect}
For all $t,s\in\Sp^1$, 
$$\partial F_{2k+1}(s)\cap F_{2k+1}(s)\neq \emptyset \Longleftrightarrow d_1(s,t)\leq \delta_k.$$ 
\end{corollary}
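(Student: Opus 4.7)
The plan is to recognize that this corollary is essentially a direct bookkeeping combination of the two cited results, with only a small index shift to reconcile. First I would read the intended statement as
$$\partial F_{2k+1}(t)\cap \partial F_{2k+1}(s)\neq \emptyset \Longleftrightarrow d_1(s,t)\leq \delta_k,$$
since this is the two-point special case ($\ell=2$) of the duality established in Proposition \ref{prop:voro-conv}, and what makes the corollary meaningful.

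Next, I would specialize Proposition \ref{prop:voro-conv} to $\ell=2$ with the two distinct points $t_1=t$ and $t_2=s$. This specialization yields, verbatim, the equivalence
$$\partial F_{2k+1}(t)\cap \partial F_{2k+1}(s)\neq \emptyset \Longleftrightarrow [\gamma_{2k+1}(t),\gamma_{2k+1}(s)] \text{ is an exposed edge of } \mathcal{B}_{2(k+1)}.$$
So the problem reduces to characterizing when a segment between two points on the trigonometric moment curve is an exposed edge of the associated Barvinok--Novik polytope.

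The characterization of edges is exactly the content of Theorem \ref{thm:gen-edges}, which I would apply with the index shift $k \rightsquigarrow k+1$. Indeed, Theorem \ref{thm:gen-edges} states that for $\mathcal{B}_{2k}$ the segment $[\gamma_{2k-1}(t_1),\gamma_{2k-1}(t_2)]$ is an exposed edge iff $d_1(t_1,t_2)\leq \delta_{k-1}$; replacing $k$ with $k+1$ gives the analogous statement for $\mathcal{B}_{2(k+1)}$ with the bound $\delta_k$. Combining this with the previous step immediately yields the claimed equivalence.

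I do not anticipate any genuine obstacle: both cited results are being invoked as black boxes, and the only content added is the $\ell=2$ specialization together with the index shift. If anything, the one place demanding minor care is making sure the conventions line up (that $\mathcal{B}_{2(k+1)}$ really is the convex hull of $\gamma_{2k+1}(\Sp^1)$, as its definition in the section on the Barvinok--Novik polytope confirms), so that applying Theorem \ref{thm:gen-edges} to the right polytope produces the bound $\delta_k$ rather than $\delta_{k-1}$ or $\delta_{k+1}$.
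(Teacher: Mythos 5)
Your proposal is correct and matches the paper's intent exactly: the paper states this as an immediate corollary of Proposition~\ref{prop:voro-conv} (specialized to $\ell=2$) and Theorem~\ref{thm:gen-edges} applied to $\mathcal{B}_{2(k+1)}$, which is precisely your argument, including the index shift $\delta_{k-1}\rightsquigarrow\delta_k$ and the reading of the statement as $\partial F_{2k+1}(t)\cap\partial F_{2k+1}(s)\neq\emptyset$.
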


\begin{proof}[Proof of \Cref{prop:voro-conv}]

Suppose that $q\in \partial F_{2k+1}(t_i)$ for $i=1,\ldots,\ell$. This implies that $q$ is equidistant to all $\gamma_{2k+1}(t_i)$. Let $r:=d_{2k+1}(q,\gamma_{2k+1}(t_i))$ be the value of the common distance. Then, the open geodesic ball $B_r(q)$ does not contain any point $\gamma_{2k+1}(t)$, $t\in \Sp^1$, in its interior. Since $B_r(q)$ is the intersection of $\Sp^{2k+1}$ with the following half-space (see  the left panel of \Cref{fig:duality})
$$H(q,r):=\{x\in \R^{2k+2}|\big(x-q\cos(r)\big)\cdot q \geq 0\},$$
we have that $\conv\big(\{\gamma_{2k+1}(t_1),\ldots,\gamma_{2k+1}(t_\ell)\}\big)$ is an exposed face of $\mathcal{B}_{2(k+1)}$.

For the converse, assume that $\sigma = \conv\big(\{\gamma_{2k+1}(t_1),\ldots,\gamma_{2k+1}(t_\ell)\}\big)$ is an exposed face of $\mathcal{B}_{2(k+1)}$. Let $\Sigma$ be a supporting hyperplane for this face, let $H_\Sigma$ be the associated half-space such that  $\mathcal{B}_{2(k+1)}\cap H_\Sigma = \sigma$ and let $q$ be the center of the spherical cap $\Sp^{2k+1}\cap H_\Sigma$; see  the right panel of \Cref{fig:duality}. Let $r$ be the (spherical) radius of this cap, i.e. such that $\Sp^{2k+1}\cap H_\Sigma = \overline{B_r(q)}$.  Then, by construction, $q$ is at distance $r$ to $\gamma_{2k+1}(t_i)$, for $i=1,\ldots \ell$, and its distance to any other $\gamma_{2k+1}(t)$, $t\in\Sp^1$, is at least $r$. Hence, $q$ lies in the intersection $\cap_{i=1}^\ell \partial F_{2k+1}(t_i)$.
\end{proof}

\subsection{The modulus of discontinuity of $\psi_{2k+1}$ is minimal}

The theorem below is obtained as consequence of current knowledge of the structure of the Barvinok-Novik $\mathcal{B}_{2k}$ and the relationship between convex hulls and Voronoi tilings induced by points on spheres. This theorem can be interpreted as emphasizing one particular aspect in which the TMC is optimal.  

\begin{theorem}\label{thm:disc-gen-k}
$\disc(\psi_{2k+1})=\disc(\psi_{2k})=\delta_{k} = \frac{2k\pi}{2k+1}.$
\end{theorem}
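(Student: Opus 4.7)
The plan is to combine the duality between the facial structure of the Barvinok--Novik polytope and the Voronoi tiling with the characterization of $\disc(\psi_{2k+1})$ in terms of adjacent fibers. The key observation is that, by \Cref{coro:disc-voro},
$$\disc(\psi_{2k+1}) = \max\{t \in [0, \pi] : \partial F_{2k+1}(0) \cap \partial F_{2k+1}(t) \neq \emptyset\},$$
so the theorem reduces to determining the largest $t$ for which the boundaries of two distinct fibers of $R_{2k+1}$ meet.

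First, I would obtain the lower bounds. By items 1 and 3 of \Cref{rem:antipodal}, both $\psi_{2k+1}$ and $\psi_{2k}$ are antipodal, so \Cref{thm:disc-lower-bound} yields $\disc(\psi_{2k+1}) \geq \delta_k$ and $\disc(\psi_{2k}) \geq \delta_k$. Next, I would establish the matching upper bound for $\psi_{2k+1}$ by applying \Cref{coro:voro-intersect}: the intersection $\partial F_{2k+1}(0) \cap \partial F_{2k+1}(t)$ is non-empty if and only if $d_1(0, t) \leq \delta_k$. Combined with the formula from \Cref{coro:disc-voro}, this immediately gives $\disc(\psi_{2k+1}) \leq \delta_k$, hence $\disc(\psi_{2k+1}) = \delta_k$.

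To handle $\psi_{2k}$, I would invoke \Cref{prop:dis2k}, which gives $\disc(\psi_{2k}) \leq \disc(\psi_{2k+1}) = \delta_k$. Together with the lower bound $\disc(\psi_{2k}) \geq \delta_k$ from the first step, we conclude $\disc(\psi_{2k}) = \delta_k$ as well, completing the chain of equalities.

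The main obstacle has already been absorbed into the preceding results: the non-trivial step is \Cref{coro:voro-intersect}, whose proof relies on \Cref{prop:voro-conv} (the duality between Voronoi cell intersections and exposed faces of $\mathcal{B}_{2(k+1)}$) together with the edge characterization in \Cref{thm:gen-edges}. Given those, the proof of \Cref{thm:disc-gen-k} is essentially a bookkeeping argument stringing together the lower bound from Borsuk--Ulam-type obstructions and the upper bound coming from the combinatorics of the Barvinok--Novik polytope.
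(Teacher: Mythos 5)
Your proposal is correct and follows essentially the same route as the paper: lower bound from \Cref{thm:disc-lower-bound} via antipodality, upper bound for $\psi_{2k+1}$ by combining \Cref{coro:disc-voro} with \Cref{coro:voro-intersect} (itself resting on \Cref{prop:voro-conv} and \Cref{thm:gen-edges}), and the even case via \Cref{prop:dis2k}. No substantive differences to report.
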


\begin{proof}[Proof of \Cref{thm:disc-gen-k}]
By \Cref{thm:disc-lower-bound} and \Cref{prop:dis2k}, it suffices to prove that $\disc(\psi_{2k+1})\leq \delta_k$. To this end, we are going to invoke \Cref{coro:disc-voro} which states that
$$\disc(\psi_{2k+1}) = \min\{t\in [0,\pi]|\,\partial F_{2k+1}(0)\cap \partial F_{2k+1}(t)\neq \emptyset\}.$$
Assume that $t\in[0,\pi]$ is such that $\partial F_{2k+1}(0)\cap \partial F_{2k+1}(t)\neq \emptyset.$ By \Cref{coro:voro-intersect}, $d_1(0,t)\leq \delta_k$. Thus, $\disc(\psi_{2k+1})\leq \delta_k$.
\end{proof}

\subsection{Additional results stemming from \Cref{prop:voro-conv} \extension}

\section{Results for the case $\Sp^m$ versus $\Sp^n$ \faClock}

\section{Historical account and connections \faClock} \label{sec:hist}

Our project about the precise determination of the Gromov-Hausdorff distance between spheres was started around 2003 in the context of the PhD thesis of the first author (FM). Part of the thesis was devoted to applications of the GH distance in shape matching/comparison applications. This required developing some code for computing/estimating the distance. However, the code that was developed was not provably correct, in the sense that it used a number of heuristics in order to estimate the value of the distance. One observation made back then was that if the actual values of GH distance on a collection of canonical shapes were determined via theoretical methods then one would be able to assess the quality of said software by comparing the theoretically predicted value with the value produced by the software. It was eventually established that the computational problem posed by the GH distance is NP hard in the PhD work of Schmiedl \cite{schmiedl2017computational}.

In 2007, during an Applied Topology seminar at Stanford, Tigran Ishkhanov suggested to FM the possibility of invoking the Borsuk-Ulam theorem in the context of the problem of computing the GH distance between spheres. This did not seem immediately useful as the standard version of the BU theorem is only applicable to continuous functions which are not, a priori, present in the definition of the GH distance. During the same seminar meeting, Gunnar Carlsson suggested using the stability of persistent homology for obtaining efficient lower bounds for the GH distance between spheres.  This thread was explored in the research leading to \cite{lim2020vietoris} and \cite{memoli2019persistent}.  The former approach would be revisited later (see below).

Around 2013/2014 this problem was energetically discussed in group meetings at OSU in which the second author (ZS) participated.  Circa 2014/2015 ZS first constructed and experimentally tested the correspondences $R_{\gamma_n}$ between $\Sp^1$ and $\Sp^n$ for $n=2,3,4,5$.  ZS found the shape of such curves through painstaking trial and error.\footnote{Aided by classical multidimensional scaling methods in order to visualize the progressively better correspondences he obtained.} 

Soon after that, FM  tested these correspondences and constructed and tested the correspondence from \Cref{sec:variant-R2} (between $\Sp^1$ and $\Sp^2$)  as way of obtaining a correspondence which was somehow simpler than $R_{\gamma_2}.$ This was then generalized to those correspondences described in \Cref{sec:corr-sn-snp1} for spheres $\Sp^m$ and $\Sp^n$ of different dimension.

\medskip
In the next few years the following took place:
\begin{itemize}
    \item FM and ZS continued to experimentally test  the correspondences $R_{\gamma_n}$ while they also
    \item attempted to theoretically determine the distortion of these correspondences. 

\item Versions of these EPC correspondences applicable to spheres $\Sp^m$ and $\Sp^n$ of different dimension were also developed and tested extensively.
\item Around 2015 Sunhyuk Lim (SL) became interested in the project and came on board to help develop the ideas that would eventually become \cite{lim2021gromov} and also those that led to the related project \cite{lim2020vietoris}.
    
    \item At some point in 2014/2015 they found a cartoonization of $R_{\gamma_2}$ for which we were able to precisely determine its distortion. This cartoonization is explained in detail in \cite[Appendix D]{lim2021gromov-arxiv}. 

    \item Neither ZS or FM were aware of the connection between the correspondences $R_{\gamma_n}$ and the TMC or the Barvinok-Novik polytope.  In early 2017 Henry Adams visited OSU and, in the course of a conversation, ZS and FM  shared with him that they were working on the problem of determining the GH distance between spheres and described the TMC-EPC. During that conversation, Henry shared with FM and ZS that he had been learning about the Barvinok-Novik polytope and that he recognized that the curve they had been contemplating was known as the (symmetric) TMC in the literature about polytopes. Henry encouraged FM and ZS to look into that connection. Henry's study of the TMC and the BN polytope eventually led to his joint work with Johnathan Bush and Florian Frick \cite{adams2020metric,adams2023topology}. In a rather precise sense, ideas from both  \cite{adams2020metric} and \cite{lim2021gromov} were eventually combined in order to obtain the results in \cite{adams2022gromov}. 

\item One of the difficulties encountered when trying to determine the precise value of the distortion of $R_{\gamma_n}$ and in establishing that it is optimal was the lack of knowledge about tight lower bounds for GH between spheres. The idea that was explored initially was to use the GH stability of persistent homology of Vietoris-Rips complexes to help in this regard. This led to \cite{lim2020vietoris} and \cite{memoli2019persistent} but did not end up giving tight lower bounds; see the discussion in \cite[Section 9.3.2]{lim2020vietoris} and \cite[Remark 1.13]{lim2021gromov}. 

\item  At some point in 2019/2020 the authors of \cite{lim2021gromov} explored the idea of establishing suitable versions of the Borsuk-Ulam theorem to quantitavely obstruct the existence of low distortion correspondences between spheres of different dimension. This was FM's interpretation of a suggestion made by Tigran Ishkhanov around 2007 (it took several years to eventually come around to this possibility which turned out to be very fruitful). In the summer of 2020, while reading \cite{matouvsek2003using} we found a mention of a version of the Borsuk-Ulam theorem applicable to discontinuous functions due to Dubins and Schwarz \cite{dubins1981equidiscontinuity}. After some massaging (via the so called `helmet trick' \cite[Lemma 5.7]{lim2021gromov}) they were able to invoke this theorem in order to conclude that, for $n>m$, $\dgh(\Sp^n,\Sp^m)\geq \tfrac{1}{2}\zeta_m$ where $\zeta_m:=\arccos\big(\tfrac{-1}{m+1}\big).$ 

\item When $m=1$ and $n=2,3$,they  experimentally determined that this lower bound, $\tfrac{\pi}{3}$, was matched by the distortions of $R_{\gamma_2}$ and $R_{\gamma_3}$ yet they were not able to prove this mathematically. This led to developing the correspondences used in \cite[Proposition 1.16 and 1.18]{lim2021gromov} and in \cite[Appendix D]{lim2021gromov-arxiv} (for $n=2,3$) through the process of ``cartoonization" mentioned earlier, which were substantially easier to analyze. A first version of \cite{lim2021gromov} was completed in 2021 \cite{lim2021gromov-arxiv}.   

\item In 2020/2021 Henry noticed a similarity between the table in \cite[Figure 2]{lim2021gromov} and the table on \cite[page 11]{bush2019proposal} and \cite[page 80]{bush2021topological} which made him suspect the existence of a quantitative connection between \cite{lim2021gromov} and \cite{adams2020metric}. See \cite[Question 8.12]{adams2022gromov}.

\item From 2021 to 2022 a Polymath style group was formed with participants from Colorado State University, Carnegie Mellon, Ohio State and the Freie Universitet in Berlin. Many developments that combined ideas related to different threads came out of that project. See  \cite{adams2022gromov}.

\item One particular result obtained in the course of the Polymath activity was the lower bound given in \Cref{thm:lower-bound}, namely $\dgh(\Sp^1,\Sp^{2n})\ge\frac{\pi n}{2n+1}$ and $\dgh(\Sp^1,\Sp^{2n+1})\ge\frac{\pi n}{2n+1}$, for arbitrary $n\geq 1$. This improved upon the lower bound $\dgh(\Sp^1,\Sp^n)\geq \tfrac{\pi}{3}$ for arbitrary $n$ arising from \cite{lim2021gromov}. Henry conjectured these inequalities were in fact equalities.
By providing a clear, explicit, lower bound, this has a direct impact on the analysis of the correspondences $R_{\gamma_n}$: to prove they are optimal, one just needs to prove their distortion is bounded above by twice the lower bound.

\item Prompted by Henry, starting early in the course of the Polymath, FM described the general EPC idea as well as the TMC-EPC version to all the participants. During one of the subsequent meetings, Johnathan Bush described having experimented with the case of $R_{\gamma_{7}}$ and  obtaining results that matched the lower bound $\tfrac{7\pi}{15}$ predicted by \Cref{thm:lower-bound}. In subsequent meetings, FM described cartoonizations of the TMC-EPC via geodesic segments, generalizations,  as well as some germs of the results presented in this writeup.

 \item In July 2022, in the context of the Polymath, Amzi Jeffs and Michael Harrison started exploring a construction of correspondences between $\Sp^m$ and $\Sp^{n}$ arising through first identifying suitable finite centrally symmetric point sets on each sphere in order to then partition spheres into Voronoi cells. These correspondences are structurally related to  and inspired by  the ones described in \cite{lim2021gromov} and especially to those in \cite[Appendix D]{lim2021gromov-arxiv}. By carefully designing these points sets, Amzi and Michael managed to prove they were optimal for all $(n,m) = (1, 2k)$, $k\geq 1$. This led to the results in \cite{harrison2023quantitative} containing the first complete description of a family of optimal correspondences between $\Sp^1$ and \emph{all} even dimensional spheres.

\item More or less simultaneously, in November 2022, Amzi and Michael started thinking of possible ways of constructing optimal correspondences between $\Sp^1$ and all odd dimensional spheres. For this they explored constructions inspired by the TMC-EPC (especially cartoonizations via piecewise geodesic curves). In April 2023 they reported  having been able to prove that the correspondences they constructed were optimal. An upcoming update to \cite{harrison2023quantitative} will describe this construction and establish its optimality. Therefore, these results in combination with the ones described in the previous bullet point, provide the first complete answer to \Cref{q:central} for the value $m=1$.
     
\end{itemize}

\newcommand{\etalchar}[1]{$^{#1}$}


\begin{thebibliography}{ABC{\etalchar{+}}22}

\bibitem[ABC{\etalchar{+}}22]{adams2022gromov}
Henry Adams, Johnathan Bush, Nate Clause, Florian Frick, Mario G{\'o}mez,
  Michael Harrison, R~Amzi Jeffs, Evgeniya Lagoda, Sunhyuk Lim, Facundo
  M{\'e}moli, et~al.
\newblock Gromov-{H}ausdorff distances, {B}orsuk-{U}lam theorems, and
  {V}ietoris-{R}ips complexes.
\newblock {\em arXiv preprint arXiv:2301.00246}, 2022.

\bibitem[ABF20]{adams2020metric}
Henry Adams, Johnathan Bush, and Florian Frick.
\newblock Metric thickenings, {B}orsuk--{U}lam theorems, and orbitopes.
\newblock {\em Mathematika}, 66(1):79--102, 2020.

\bibitem[ABF23]{adams2023topology}
Henry Adams, Johnathan Bush, and Florian Frick.
\newblock The topology of projective codes and the distribution of zeros of odd
  maps.
\newblock {\em Michigan Mathematical Journal}, 1(1):1--22, 2023.

\bibitem[Bar17]{barvinokbeauty}
Alexander Barvinok.
\newblock The beauty and the mystery of the symmetric moment curve.
\newblock {\em Discrete Geometry and Convexity in Honour of Imre
  B{\'a}r{\'a}ny}, page~19, 2017.

\bibitem[BKS24]{sturmfelds-mag}
Paul Breiding, Kathlen Kohn, and Bernd Sturmfelds.
\newblock {\em Metric algebraic geometry}, volume Oberwolfach Seminar 53.
\newblock Birkhäuser, 2024.

\bibitem[BLN13]{barvinok2013neighborliness}
Alexander Barvinok, Seung~Jin Lee, and Isabella Novik.
\newblock Neighborliness of the symmetric moment curve.
\newblock {\em Mathematika}, 59(1):223--249, 2013.

\bibitem[BN08]{barvinok2008centrally}
Alexander Barvinok and Isabella Novik.
\newblock A centrally symmetric version of the cyclic polytope.
\newblock {\em Discrete \& Computational Geometry}, 39(1-3):76--99, 2008.

\bibitem[Bro79a]{brown1979geometric}
Kevin~Q Brown.
\newblock {\em Geometric transforms for fast geometric algorithms}.
\newblock PhD thesis, Carnegie-Mellon University Pittsburgh, PA, 1979.

\bibitem[Bro79b]{brown1979voronoi}
Kevin~Q Brown.
\newblock Voronoi diagrams from convex hulls.
\newblock {\em Information processing letters}, 9(5):223--228, 1979.

\bibitem[Bus19]{bush2019proposal}
Johnathan Bush.
\newblock Dissertation proposal: Simplicial metric thickenings of spheres.
\newblock
  \url{https://people.clas.ufl.edu/henry-adams/files/JohnBush_ThesisProposal2019.pdf},
  2019.

\bibitem[Bus21]{bush2021topological}
Johnathan~E Bush.
\newblock {\em Topological, geometric, and combinatorial aspects of metric
  thickenings}.
\newblock PhD thesis, Colorado State University, 2021.

\bibitem[DS81]{dubins1981equidiscontinuity}
Lester Dubins and Gideon Schwarz.
\newblock Equidiscontinuity of {B}orsuk-{U}lam functions.
\newblock {\em Pacific Journal of Mathematics}, 95(1):51--59, 1981.

\bibitem[FIN13]{ferreira2013projections}
Orizon~Pereira Ferreira, Alfredo~N Iusem, and S{\'a}ndor~Zolt{\'a}n N{\'e}meth.
\newblock Projections onto convex sets on the sphere.
\newblock {\em Journal of Global Optimization}, 57(3):663--676, 2013.

\bibitem[For17]{fortune2017voronoi}
Steven Fortune.
\newblock Voronoi diagrams and delaunay triangulations.
\newblock In {\em Handbook of discrete and computational geometry}, pages
  705--721. Chapman and Hall/CRC, 2017.

\bibitem[HJ23]{harrison2023quantitative}
Michael Harrison and R~Amzi Jeffs.
\newblock Quantitative upper bounds on the {G}romov-{H}ausdorff distance
  between spheres.
\newblock {\em arXiv preprint arXiv:2309.11237}, 2023.

\bibitem[LMO24]{lim2020vietoris}
Sunhyuk Lim, Facundo Memoli, and Osman~Berat Okutan.
\newblock Vietoris-{R}ips persistent homology, injective metric spaces, and the
  filling radius.
\newblock {\em Algebraic and Geometric Topology}, 24, 2024.

\bibitem[LMS21]{lim2021gromov-arxiv}
Sunhyuk Lim, Facundo M{\'e}moli, and Zane Smith.
\newblock The {G}romov-{H}ausdorff distance between spheres.
\newblock {\em arXiv preprint arXiv:2105.00611}, 2021.

\bibitem[LMS23]{lim2021gromov}
Sunhyuk Lim, Facundo M{\'e}moli, and Zane Smith.
\newblock The {G}romov--{H}ausdorff distance between spheres.
\newblock {\em Geometry \& Topology}, 27(9):3733--3800, 2023.

\bibitem[MBZ03]{matouvsek2003using}
Ji{\v{r}}{\'\i} Matou{\v{s}}ek, Anders Bj{\"o}rner, and G{\"u}nter~M Ziegler.
\newblock {\em Using the Borsuk-Ulam theorem: lectures on topological methods
  in combinatorics and geometry}, volume 2003.
\newblock Springer, 2003.

\bibitem[MS23]{dgh-github}
Facundo M\'emoli and Zane Smith.
\newblock {EPC}s for the {G}romov-{H}ausdorff distance between spheres.
\newblock \url{https://github.com/ndag/dgh-spheres}, 2023.

\bibitem[MZ23]{memoli2019persistent}
Facundo M{\'e}moli and Ling Zhou.
\newblock Persistent homotopy groups of metric spaces.
\newblock {\em Journal of Topology and Analysis (to appear)}, 2023.

\bibitem[Sch17]{schmiedl2017computational}
Felix Schmiedl.
\newblock Computational aspects of the {G}romov--{H}ausdorff distance and its
  application in non-rigid shape matching.
\newblock {\em Discrete \& Computational Geometry}, 57(4):854--880, 2017.

\bibitem[Sin13]{sinn2013algebraic}
Rainer Sinn.
\newblock Algebraic boundaries of so (2) so (2)-orbitopes.
\newblock {\em Discrete \& Computational Geometry}, 50:219--235, 2013.

\bibitem[Smi90]{smilansky1990bi}
Zeev Smilansky.
\newblock Bi-cyclic 4-polytopes.
\newblock {\em Israel Journal of Mathematics}, 70:82--92, 1990.

\bibitem[Vin11]{vinzant2011edges}
Cynthia Vinzant.
\newblock Edges of the {B}arvinok--{N}ovik orbitope.
\newblock {\em Discrete \& Computational Geometry}, 46(3):479--487, 2011.

\end{thebibliography}
\end{document}